\documentclass{amsart}

\usepackage{mathtools,amsthm,amssymb,mathrsfs,enumitem,mdframed}
\usepackage{scalerel}
\usepackage{mathscinet}

\numberwithin{equation}{section}
\numberwithin{figure}{section}
\theoremstyle{plain}
\newtheorem{thm}{Theorem}[section]
  \theoremstyle{definition}
  \newtheorem{defn}[thm]{Definition}
  \newtheorem*{defn*}{Definition}
  \theoremstyle{remark}
  \newtheorem*{rem*}{Remark}
  \theoremstyle{plain}
  \newtheorem{lem}[thm]{Lemma}
  \theoremstyle{plain}
  \newtheorem{prop}[thm]{Proposition}
  \newtheorem{cor}[thm]{Corollary}
  \theoremstyle{definition}
  \newtheorem{example}[thm]{Example}

\makeatletter
\newcommand{\norm}{\@ifstar{\@normb}{\@normi}}
\newcommand{\@normb}[2]{\left\Vert{#1}\right\Vert_{#2}}
\newcommand{\@normi}[2]{\Vert{#1}\Vert_{#2}}
\makeatother

\global\long\def\Leb#1{L^{#1}}
\DeclareMathOperator{\supp}{supp}

\global\long\def\bes#1#2{B_{#1}^{#2}}
\global\long\def\Bessel#1#2{L_{#1}^{#2}}
\global\long\def\oBessel#1#2{{L}^{#2}_{#1,0}}
\global\long\def\haSob#1#2{{\mathcal{H}^{#2}_{#1}}}
\newcommand{\action}[1]{\left<#1 \right>}
\newcommand{\boldb}{\mathbf{b}}

\DeclareMathOperator{\Tr}{Tr}
\DeclareMathOperator{\Div}{div}

\newcommand{\relphantom}[1]{\mathrel{\phantom{#1}}}
\newcommand{\myd}[1]{\,d#1}

\newcommand{\Di}{\scaleobj{0.75}{D}}
\newcommand{\Neu}{\scaleobj{0.75}{N}}

\usepackage[commentmarkup=todo]{changes}
\definechangesauthor[name=Kim, color=blue]{kim}
\definechangesauthor[name=Kwon, color=red]{kwon}
\usepackage{subcaption}
\usepackage[numbers,sort&compress]{natbib}
\begin{document}
 
\title{}
 
\title[Elliptic equations with singular drifts on Lipschitz domains]{Dirichlet and Neumann problems for elliptic equations with singular drifts on Lipschitz domains}
\author{Hyunseok Kim}
\address{(04107) Department of Mathematics, Sogang Univeristy, 35, Baekbeom-ro, Mapo-gu, Seoul, Republic of Korea}
\email{kimh@sogang.ac.kr}
\thanks{The researh of this work was supported by Basic Science Research Program through the National Research Foundation of Korea (NRF) funded by the Ministry of Education (No. NRF-2016R1D1A1B02015245).}

\author{Hyunwoo Kwon}
\address{(04107) Department of Mathematics, Sogang Univeristy, 35, Baekbeom-ro, Mapo-gu, Seoul, Republic of Korea}
\curraddr{(28187) Department of Mathematics, Republic of Korea Air Force Academy, Postbox 335-2, 635, Danjae-ro   Sangdang-gu, Cheongju-si  Chung\-cheong\-buk-do, Republic of Korea}
\email{willkwon@afa.ac.kr; willkwon@sogang.ac.kr}

\subjclass[2020]{35J15,  35J25}

\begin{abstract}
We consider the Dirichlet and Neumann problems for second-order linear elliptic equations:
\[
-\triangle u +\Div(u\boldb) =f \quad\text{ and }\quad  -\triangle v -\boldb \cdot \nabla v =g
\]
in a bounded Lipschitz domain $\Omega$ in $\mathbb{R}^n$ $(n\geq 3)$, where $\boldb:\Omega \rightarrow \mathbb{R}^n$ is a given vector field. Under the assumption that $\boldb \in L^{n}(\Omega)^n$, we first establish existence and uniqueness of solutions in $L_{\alpha}^{p}(\Omega)$ for the Dirichlet   and Neumann problems. Here $L_{\alpha}^{p}(\Omega)$ denotes the Sobolev space (or Bessel potential space) with the pair $(\alpha,p)$ satisfying certain conditions. These results extend the classical works of Jerison-Kenig \cite{MR1331981} and Fabes-Mendez-Mitrea \cite{MR1658089} for the Poisson equation.  We also  prove existence and uniqueness of solutions of the Dirichlet problem with boundary data in $L^{2}(\partial\Omega)$. Our results for the Dirichlet problems hold  even for the case $n=2$.  
\end{abstract}

\maketitle
 
\bibliographystyle{amsplain} 

\section{Introduction}
In this paper we study the Dirichlet and Neumann problems  for second-order linear elliptic equations  with singular drifts on a  bounded Lipschitz domain $\Omega$ in $\mathbb{R}^n$ $(n\geq 2)$.  Given a vector field $\boldb=\left(b^{1},\dots,b^{n}\right):\Omega\rightarrow\mathbb{R}^{n}$, we consider the following Dirichlet problems:
\begin{equation}\label{eq:D-1}\tag{$D$}
\left\{\begin{alignedat}{2}
-\triangle u+\Div\left(u\boldb\right) & =f & \quad & \text{in }\Omega,\\
u & =u_{\Di} & \quad & \text{on }\partial\Omega \nonumber
\end{alignedat}\right.
\end{equation}
and
\begin{equation}\label{eq:D-2}\tag{$D'$}
\left\{\begin{alignedat}{2}
-\triangle v-\boldb\cdot\nabla v & =g & \quad & \text{in }\Omega,\\
v & =v_{\Di} & \quad & \text{on }\partial\Omega.
\end{alignedat}\right.
\end{equation}
 We also consider the following Neumann
problems:
\begin{equation}\label{eq:N-1}\tag{$N$}
\left\{\begin{alignedat}{2}
-\triangle u+\Div\left(u\boldb\right) & =f & \quad & \text{in }\Omega,\\
\left(\nabla  u - u\boldb \right)\cdot \nu & =u_{\Neu} & \quad & \text{on }\partial\Omega
\end{alignedat}\right.
\end{equation}
and
\begin{equation}\label{eq:N-2}\tag{$N'$}
\left\{\begin{alignedat}{2}
-\triangle v-\boldb\cdot\nabla v & =g & \quad & \text{in }\Omega,\\
\nabla v\cdot \nu & =v_{\Neu} & \quad & \text{on }\partial\Omega,\nonumber
\end{alignedat}\right.
\end{equation}
where  $\nu$ denotes the outward unit normal to the boundary  $\partial\Omega$.  For $-\infty<\alpha<\infty$, $0<\beta<1$, and $1<p<\infty$, we denote $\Bessel{\alpha}{p}(\Omega)$ and $\bes{\beta}{p}(\partial\Omega)$ the Sobolev space (or Bessel potential space) and Besov space, respectively.  See Section \ref{sec:prelim} for more details on these function spaces.

When $\boldb$ is sufficiently regular, for example, $\boldb \in \Leb{\infty}(\Omega)^n$, unique solvability results  in $\Bessel{1}{p}(\Omega)$  are well-known for the Dirichlet  and  Neumann problems on smooth domains.    For a singular drift  $\boldb \in \Leb{n}(\Omega)^n$ with $n\geq 3$,  existence and uniqueness results in $\Bessel{1}{2}(\Omega)$ for the Dirichlet problems  have been already obtained by Trudinger \cite{T73} and by  Droniou \cite{MR1908676} on general Lipschitz domains. The corresponding results for the Neumann problems were established by Droniou-V\'azquez \cite{MR2476418}. Recently, $\Bessel{1}{p}$-results were obtained by Kim-Kim \cite{MR3328143} for the Dirichlet problems on $C^1$-domains and by Kang-Kim \cite{MR3623550}  for the Dirichlet and Neumann problems on domains which have  small Lipschitz constant.  The authors in  \cite{T73,MR1908676,MR2476418,MR3328143,MR3623550} also obtained $\Bessel{1}{p}$-results on two-dimensional domains $\Omega$ for the case when $\boldb \in \Leb{r}(\Omega)^2$ for some $r>2$. For a singular drift  $\boldb$ in the critical space $\Leb{2}(\Omega)^2$, the second author \cite{K21} recently proved unique solvability results in $\Bessel{1}{p}(\Omega)$ for the Dirichlet problems on bounded domains in $\mathbb{R}^2$ which have small Lipschitz constant, by using the recent results of Krylov \cite{MR4223034,MR4317707}.  Moreover, several authors have studied regularity properties of  solutions of the Dirichlet problems (see \cite{MR3048265,MR3818670,MR2667641,MR2760150} and references therein).  Kim-Ryu-Woo \cite{KRW20} recently obtained solvability results in Sobolev spaces with mixed norms for parabolic equations with unbounded drifts.

The assumption $\boldb \in \Leb{n}(\Omega)^n$ is essential to our study due to simple examples.  Let $\Omega$ be the  unit ball in $\mathbb{R}^n$ centered at the origin. Define $\boldb(x) = -n{x}/{|x|^2}$ and $v(x)= 1-|x|^2$. Then $\boldb \in \Leb{q}(\Omega)^n$ for all $q<n$ and  $v\in \Bessel{1}{2}(\Omega)$  is a nontrivial solution of \eqref{eq:D-2} with $g=0$ and $v_{\Di}=0$. This example shows that   solutions of the problem \eqref{eq:D-2} may not be unique  when $\boldb \in \Leb{q}(\Omega)^n$ for any $q<n$.  

For more than 40 years, many authors have studied the Dirichlet and Neumann problems for the Poisson equation on Lipschitz domains.  In particular, Jerison-Kenig \cite{MR1331981} established an optimal solvability result in $\Bessel{\alpha}{p}(\Omega)$ for the Dirichlet problem for the Poisson equation when $(\alpha,p)$ belongs to a set $\mathscr{A}$ (see Definition \ref{defn:admissible} for a precise definition of $\mathscr{A}$).  Similar results for the Neumann problem were obtained by  Fabes-Mendez-Mitrea \cite{MR1658089} for $n\geq 3$ and Mitrea \cite{MR1883390} for $n=2$.

The purpose of this paper is twofold. First, extending  the classical results in \cite{MR1658089,MR1331981}  and the recent results in  \cite{MR1908676,MR2476418,MR3623550,MR3328143}, we prove existence and uniqueness of solutions in $L_{\alpha}^{p}(\Omega)$ for the Dirichlet  problems on bounded Lipschitz domains $\Omega$ in $\mathbb{R}^n$, $n\geq 2$. Similar results  will be also obtained for the Neumann problems on Lipschitz domains in $\mathbb{R}^n$ for $n\geq 3$.  The second purpose  is  to establish  the unique solvability in a function space $X$ for the Dirichlet problem  \eqref{eq:D-1}  with   boundary data $u_D$ in $L^2(\partial\Omega)$.    Such a problem with singular data is often motivated by  finite element analysis for some optimal control problems (see \cite{MR2084239,MR3070527} and references therein).   A relevant result to our purpose is due to  Choe-Kim \cite{MR2846167} who proved a unique solvability result for the stationary incompressible Navier-Stokes system with $L^2$-boundary data on Lipschitz domains $\Omega$ in $\mathbb{R}^3$. Motivated by this work, we
take $X$ as the sum   of Sobolev spaces   $L_{\alpha}^{p}(\Omega)$ and the space of harmonic functions in $L_{1/2}^{2}(\Omega)$. Then  existence, uniqueness, and regularity of solutions  in $X$ will be proved for the Dirichlet problem \eqref{eq:D-1} with boundary data in $\Leb{2}(\partial\Omega)$ on general bounded   Lipschitz domains $\Omega$ in $\mathbb{R}^n$. 

Our main results are stated precisely in Section \ref{sec:main-results} after basic notions and preliminary results are introduced in Section \ref{sec:prelim}. For the Dirichlet problems, Theorem \ref{thm:Dirichlet-problems} shows  existence and uniqueness of solutions in  $\Bessel{\alpha}{p}(\Omega)$ of the problem \eqref{eq:D-1}  for all pairs $(\alpha,p)$ belonging to  a subset $\mathscr{A}\cap\mathscr{B}$ of $(0,2)\times (0, \infty )$. Here $\mathscr{B}$ is a set of   pairs $(\alpha,p)$ such that  $\Div(u\boldb) \in \Bessel{\alpha-2}{p}(\Omega)$ for all $u\in \Bessel{\alpha}{p}(\Omega)$ (see Definition \ref{defn:admissible-B}). The same theorem also shows  unique solvability   in Sobolev spaces for the dual  problem  \eqref{eq:D-2}. The case of $\Leb{2}$-boundary data  is then considered in Theorem \ref{thm:L2-boundary}, which   shows   existence and uniqueness of a solution $u$ of the problem \eqref{eq:D-1} for every   $f\in \Bessel{\alpha-2}{p}(\Omega)$ and $u_{\Di} \in \Leb{2}(\partial\Omega)$, where $(\alpha,p)$ belongs to   $\mathscr{A}\cap\mathscr{B}$.  The  solution $u$ is given by $u=u_1 +u_2$ for some $u_1 \in \Bessel{1/2}{2}(\Omega)$ tending to $u_{\Di}$ nontangentially a.e.\ on $\partial\Omega$ and $u_2 \in \Bessel{\alpha}{p}(\Omega)+\Bessel{1}{\frac{2n}{n+1}}(\Omega)$ having zero trace.  Moreover, we deduce a regularity property of the solution $u$: that is,  if $u_{\Di} \in \Leb{q}(\partial\Omega)$ and $2\leq  q\leq \infty$, then $u\in \Bessel{1/q}{q}(\Omega)+\Bessel{\alpha}{p}(\Omega)$.

To state our results for the Neumann problems,  let $\action{\cdot,\cdot}$ denote the dual pairing between a Banach space $X$ and its dual space $X'$. It will be shown in Theorem \ref{thm:Neumann} that if $(\alpha,p)\in \mathscr{A}\cap\mathscr{B}$, then for each $f\in (\Bessel{2-\alpha}{p'}(\Omega))'$ and $u_{\Neu} \in (\bes{1+1/p-\alpha}{p'}(\partial\Omega))'$ satisfying the compatibility condition $\action{f,1}+\action{u_{\Neu},1}=0$, there exists a unique function $u\in \Bessel{\alpha}{p}(\Omega)$ with  $\int_\Omega u \myd{x}=0$ such that
\[  \action{\nabla u,\nabla \phi} -\action{u\boldb,\nabla \phi} = \action{f,\phi} + \action{u_{\Neu},\Tr \phi} \quad \text{for all } \phi \in \Bessel{2-\alpha}{p'}(\Omega).
 \]
Here    $\Tr$ denotes the trace operator given in Theorem \ref{thm:trace-theorem}.  A similar $\Bessel{\alpha}{p}$-result will   also be   proved for the dual problem \eqref{eq:N-2}.  However, an explicit counterexample (Example \ref{example:Neumann-fails}) suggests that Theorem \ref{thm:Neumann} may not imply   solvability for the Neumann problems \eqref{eq:N-1} and \eqref{eq:N-2}. Introducing a generalized normal trace operator $\gamma_{\nu}$ (Proposition \ref{prop:normal-trace}), we will show in Theorem \ref{thm:real-Neumann-problem} that if the   data $f$ is sufficiently regular, then there exists a unique $u\in \Bessel{\alpha}{p}(\Omega)$ with  $\int_\Omega u \myd{x}=0$ such that
\begin{equation*}
\left\{\begin{alignedat}{2}
-\triangle u+\Div\left(u\boldb\right) & =f & \quad & \text{in }\Omega,\\
\gamma_{\nu}(\nabla  u - u\boldb) & =u_{\Neu} & \quad & \text{on }\partial\Omega,
\end{alignedat}\right.
\end{equation*}
which provides a solvability result for  the Neumann problem  \eqref{eq:N-1}.
We also have a similar result for the dual problem \eqref{eq:N-2}.

Theorems \ref{thm:Dirichlet-problems} and \ref{thm:Neumann} are proved by a functional analytic argument.  To estimate the drift terms, we derive bilinear estimates  which are inspired by Gerhardt \cite{MR520820}; see Lemmas \ref{lem:basic-estimates} and \ref{lem:Gerhardt} below.  To prove Theorem \ref{thm:Dirichlet-problems}, we  reduce the problems \eqref{eq:D-1} and \eqref{eq:D-2} to the problems with trivial boundary data by using a trace theorem (Theorem \ref{thm:trace-theorem}) and the bilinear estimates. For a fixed $(\alpha,p)\in \mathscr{A}\cap\mathscr{B}$, let  $\mathcal{L}_{\alpha,p}^{D}:\oBessel{\alpha}{p}(\Omega)\rightarrow \Bessel{\alpha-2}{p}(\Omega)$ be the operator associated with the Dirichlet problem \eqref{eq:D-1}, that is,
\[  \mathcal{L}_{\alpha,p}^{D}u=-\triangle u +\Div(u\boldb).\]
Here $\oBessel{\alpha}{p}(\Omega)$ is the space of all functions in $\Bessel{\alpha}{p}(\Omega)$ whose trace is zero  (see Theorem \ref{thm:trace-theorem}).  Similarly, we denote by  $\mathcal{L}^{*,D}_{2-\alpha,p'}:\oBessel{2-\alpha}{p'}(\Omega)\rightarrow \Bessel{-\alpha}{p'}(\Omega)$  the operator associated with the dual problem \eqref{eq:D-2}. Due to the bilinear estimates, these operators are bounded linear operators. Also, the estimates (see Lemmas \ref{lem:Gerhardt} and  \ref{lem:compactness-of-operators}) enable us to use  the Riesz-Schauder theory to conclude that the operator $\mathcal{L}_{\alpha,p}^{D}$ (or $\mathcal{L}_{2-\alpha,p'}^{*,D}$)  is bijective if and only if it is injective.  Uniqueness results in $\Bessel{1}{p}(\Omega)$ for the problem  \eqref{eq:D-2} have been shown by {Trudinger \cite{T73}} and Droniou \cite{MR1908676} for $p=2<n$ and by the second author \cite{K21} for $p>n=2$. Using these results, we prove that the kernels of the operator $\mathcal{L}_{1,p}^{D}$ and $\mathcal{L}_{1,p'}^{*,D}$  are trivial when $(1,p)\in \mathscr{A}\cap\mathscr{B}$ (Lemma \ref{lem:L-1p-result}). For general $(\alpha,p)\in \mathscr{A}\cap\mathscr{B}$, we  use a regularity  lemma (Lemma \ref{lem:Sobolev-regularity}) to prove that the kernel of $\mathcal{L}_{\alpha,p}^{D}$ is trivial (Proposition \ref{prop:uniqueness-of-D}). This implies the unique solvability in $\Bessel{\alpha}{p}(\Omega)$ for the Dirichlet problem \eqref{eq:D-1}.  By duality, we  obtain a similar result for the dual problem \eqref{eq:D-2}.  This outlines the proof of Theorem \ref{thm:Dirichlet-problems}.

A similar strategy works for the proof of Theorem \ref{thm:Neumann}. For a fixed $(\alpha,p)\in \mathscr{A}\cap\mathscr{B}$, let $\mathcal{L}_{\alpha,p}^{N}$ and $\mathcal{L}_{2-\alpha,p'}^{*,N}$ be the operators associated with the Neumann problems \eqref{eq:N-1} and \eqref{eq:N-2}, respectively.  The characterization of the kernels $\mathcal{L}_{1,2}^{N}+\lambda \mathcal{I}_{2}$ and $\mathcal{L}_{1,2}^{*,N}+\lambda \mathcal{I}_{2}$ was already obtained by Droniou-V\'azquez \cite{MR2476418} and Kang-Kim \cite{MR3623550} for all $\lambda \geq 0$, where the operator $\mathcal{I}_{p}:\Leb{p}(\Omega)\rightarrow (\Leb{p'}(\Omega))'$ is defined by $\action{\mathcal{I}_{p}u,v} = \int_\Omega uv\myd{x}$.   Following a similar scheme as in  the Dirichlet problems, we will show that  the kernels of the operators $\mathcal{L}_{\alpha,p}^{N}+\lambda \mathcal{I}_{p}$ and $\mathcal{L}_{2-\alpha,p'}^{*,N}+\lambda \mathcal{I}_{p'}$ are trivial for all $\lambda>0$. Also, we will show that the kernels of  $\mathcal{L}_{\alpha,p}^{N}$ and $\mathcal{L}_{2-\alpha,p'}^{*,N}$ are one dimensional (see Proposition \ref{prop:Neumann-kernel-characterization}). Then  Theorem \ref{thm:Neumann} follows from the Riesz-Schauder theory again.  This outlines the proof of Theorem \ref{thm:Neumann}.

The existence and regularity results in Theorem \ref{thm:L2-boundary}  easily follow from  Theorems \ref{thm:L2-nontangential} and \ref{thm:Dirichlet-problems}. For  the uniqueness part, we shall prove    an embedding result in $\Bessel{\alpha}{p}(\Omega)$ (Lemma \ref{lem:embedding-for-uniqueness}) and a lemma for the nontangential behavior of a solution (Lemma \ref{lem:L2-boundary-nontangential-convergence}). Finally, Theorem \ref{thm:real-Neumann-problem} will be deduced  from Proposition \ref{prop:normal-trace} and Theorem \ref{thm:Neumann}.

The rest of this paper is organized as follows. In Section \ref{sec:prelim}, we summarize known results for functions spaces on Lipschitz domains and unique solvability results for the Dirichlet and Neumann problems for the Poisson equation on Lipschitz domains. We also derive bilinear estimates which will be used repeatedly in this paper.    Section \ref{sec:main-results} is devoted to  presenting the main results of this paper for the Dirichlet problems with boundary data in $\bes{\alpha}{p}(\partial\Omega)$ and in $\Leb{2}(\partial\Omega)$, respectively.  We also state the main results for the Neumann problems. The proofs of all the main results   are provided in Sections \ref{sec:Dirichlet} and \ref{sec:Neumann}.

\subsection*{Acknowledgement} The authors would like to express sincere gratitude to  anonymous referees for valuable comments and suggestions, which have greatly improved the manuscript.  

\section{Preliminaries}\label{sec:prelim}
Throughtout the paper, we assume that $\Omega$ is a bounded Lipschitz domain in $\mathbb{R}^n$, $n\geq 2$. By $C=C(p_1,\dots,p_k)$, we denote a generic positive constant depending only on the parameters $p_1,\dots,p_k$. For two Banach spaces $X$ and $Y$ with $X\subset Y$, we say that $X$ is continuously embedded into $Y$ and write  $X\hookrightarrow Y$  if there is a constant $C>0$ such that $\norm{x}{Y}\leq C \norm{x}{X}$ for all $x\in X$. For a Banach space $X$, we denote by $X$  the dual space of $X$. The dual pairing between $X$ and $X'$ is  denoted by $ \action{\cdot , \cdot }$.

\subsection{Embedding and trace results}

For $-\infty<\alpha<\infty$ and $1< p<\infty$, let
\[  \Bessel{\alpha}{p}(\mathbb{R}^n) = \{ (I-\triangle)^{-\alpha/2} f : f\in \Leb{p}(\mathbb{R}^n) \} \]
denote the Sobolev space (or Bessel potential space)  on $\mathbb{R}^n$ (see \cite{MR0482275,MR0290095,MR3243741}).  We denote by $\oBessel{\alpha}{p}(\Omega)$ and  $\Bessel{\alpha}{p}(\Omega)$ the Sobolev spaces on $\Omega$ defined as follows:
\begin{align*}
\oBessel{\alpha}{p}(\Omega) & = \{  u \in \Bessel{\alpha}{p}(\mathbb{R}^n) : \supp u \subset \overline{\Omega} \}, \\
\Bessel{\alpha}p(\Omega) & =\begin{cases}
\left\{ u\,|_{\Omega}:u\in\Bessel{\alpha}p\left(\mathbb{R}^{n}\right)\right\}  & \text{if }\alpha\geq0,\\
\left[\oBessel{-\alpha}{p'}(\Omega)\right]' & \text{if } \alpha<0,
\end{cases}
\end{align*}
where $p'=p/(p-1)$ is the conjugate exponent  to $p$. The norms on $\oBessel{\alpha}{p}(\Omega)$ and $\Bessel{\alpha}{p}(\Omega)$ are defined by 
\[   \norm{u}{\oBessel{\alpha}{p}(\Omega)}= \norm{u}{\Bessel{\alpha}{p}(\mathbb{R}^n)} \]
and 
\[  \norm{g}{\Bessel{\alpha}{p}(\Omega)} = \inf \{ \norm{u}{\Bessel{\alpha}{p}(\mathbb{R}^n)} : u\in \Bessel{\alpha}{p}(\Omega), u|_\Omega =g \},\]
respectively.  It was shown in  \cite[Remark 2.7, Proposition 2.9]{MR1331981} that $C_0^\infty(\Omega)$ and $C^\infty(\overline{\Omega})$ are dense in $\oBessel{\alpha}{p}(\Omega)$ and $\Bessel{\alpha}{p}(\Omega)$, respectively.  It was also shown in \cite[Propositions 2.9 and 3.5]{MR1331981} that
\begin{equation}\label{eq:zero-equal}
\oBessel{\alpha}{p}(\Omega) = \Bessel{\alpha}{p}(\Omega)\quad \text{if } 0\leq \alpha<\frac{1}{p}
\end{equation}
and
\[  \oBessel{-\alpha}{p}(\Omega) = \left[\Bessel{\alpha}{p'}(\Omega) \right]'\quad \text{for all } \alpha\geq 0.  \]

For $0<\alpha<1$ and $1<p<\infty$, we define
\[
\bes{\alpha}p(\mathbb{R}^{n-1})=\left\{ g\in\Leb p(\mathbb{R}^{n-1}):\int_{\mathbb{R}^{n-1}}\int_{\mathbb{R}^{n-1}}\frac{\left|g\left(x\right)-g\left(y\right)\right|^{p}}{\left|x-y\right|^{n-1+\alpha p}}\myd{x}dy<\infty\right\}.
\]
The norm on $\bes{\alpha}{p}(\mathbb{R}^{n-1})$ is defined by 
\[  \norm{g}{\bes{\alpha}{p}(\mathbb{R}^{n-1})}=\norm{g}{\Leb{p}(\mathbb{R}^{n-1})}+\left(\int_{\mathbb{R}^{n-1}}\int_{\mathbb{R}^{n-1}} \frac{|g(x)-g(y)|^p}{|x-y|^{n-1+\alpha p}} \myd{x}dy \right)^{1/p}.  \] 
Let $(Z_i,\varphi_i)_{1\leq i\leq m}$ be a coordinate pair associated with $\partial\Omega$ (see e.g. Verchota \cite{MR769382}). Choose a partition of unity $\{\Psi_i\}_{i=1}^m$ so that $\Psi_i \in C_c^\infty(\mathbb{R}^n)$, $\supp \Psi_i \subset Z_i$, and $\sum_{i=1}^m \Psi_i=1$ in a neighborhood of $\partial\Omega$.  For $0<\alpha<1$ and $1<p<\infty$, we define  $\bes{\alpha}{p}(\partial\Omega)$ as the space of all locally integrable functions $f$ on $\partial\Omega$ such that \[ (\Psi_i f)(\cdot,\varphi_i(\cdot))\in \bes{\alpha}{p}(\mathbb{R}^{n-1})\quad \text{for every } i=1,2, ..., m,  \]
endowed with the norm
\[  \norm{f}{\bes{\alpha}{p}(\partial\Omega)}=\left(\sum_{i=1}^m \norm{(\Psi_i f)(\cdot,\varphi_i(\cdot))}{\bes{\alpha}{p}(\mathbb{R}^{n-1})}^p \right)^{1/p}.  \] 
{We also define}
\[    \bes{-\alpha}{p}(\partial\Omega) =\left[\bes{\alpha}{p'}(\partial\Omega) \right]'. \]
See Jerison-Kenig \cite{MR1331981} and Fabes-Mendez-Mitrea \cite{MR1658089} for a basic theory of Sobolev spaces $\Bessel{\alpha}{p}(\Omega)$ and Besov spaces $\bes{\alpha}{p}(\partial\Omega)$ on Lipschitz domains.

We recall the following  embedding results  for Sobolev and Besov spaces.

\begin{thm}
\label{thm:Sobolev-embedding} Let $0\leq \beta\leq \alpha <\infty$ and $1<p,q<\infty$.
\begin{enumerate}[label={\textnormal{(\roman*)}},ref={\roman*},leftmargin=2em]
\item If $(\alpha,p)$ and $(\beta,q)$ satisfy
\begin{equation}\label{eq:Sobolev-embedding-condition}
\frac{1}{p}-\frac{\alpha}{n}\leq \frac{1}{q}-\frac{\beta}{n},
\end{equation}
then $\Bessel{\alpha}{p}(\Omega) \hookrightarrow \Bessel{\beta}{q}(\Omega),$ that is, $\Bessel{\alpha}{p}(\Omega)$ is continuously embedded into $\Bessel{\beta}{q}(\Omega)$. 
\item  If $\beta<\alpha$ and inequality \eqref{eq:Sobolev-embedding-condition} is strict, then the embedding $\Bessel{\alpha}{p}(\Omega)\hookrightarrow \Bessel{\beta}{q}(\Omega)$  is compact.
\item If $(\alpha,p)$ and $(\beta,q)$ satisfy
\[  0<\alpha<1\quad \text{and}\quad \frac{1}{p}-\frac{\alpha}{n-1} \leq \frac{1}{q}-\frac{\beta}{n-1},    \]
then
\[ \bes{\alpha}{p}(\partial\Omega)\hookrightarrow \begin{cases}
\bes{\beta}{q}(\partial\Omega)  & \text{if } \beta>0,\\
\Leb{q}(\partial\Omega) & \text{if }  \beta =0.
\end{cases}
\]
\end{enumerate}
\end{thm}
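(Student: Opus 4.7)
The plan is to reduce each of the three embeddings to well-known results on $\mathbb{R}^n$ (for parts (i) and (ii)) or on $\mathbb{R}^{n-1}$ (for part (iii)), by means of extension operators and boundary charts available on Lipschitz domains.

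For part (i), the key ingredient is a bounded linear extension operator $E:\Bessel{\alpha}{p}(\Omega)\to \Bessel{\alpha}{p}(\mathbb{R}^n)$, valid for the full range of $\alpha\geq 0$ and $1<p<\infty$ under consideration; such an operator is furnished by Rychkov's construction on Lipschitz domains (or, for integer orders, by Stein's classical construction combined with complex interpolation). On the whole space, the scale-invariant continuous embedding $\Bessel{\alpha}{p}(\mathbb{R}^n)\hookrightarrow \Bessel{\beta}{q_0}(\mathbb{R}^n)$ whenever $1/p-\alpha/n=1/q_0-\beta/n$ follows from the Hardy-Littlewood-Sobolev boundedness of the Bessel potential $(I-\triangle)^{-(\alpha-\beta)/2}$. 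Composing the extension, a cutoff that is identically $1$ on a neighborhood of $\overline{\Omega}$, this $\mathbb{R}^n$-embedding, and restriction to $\Omega$, and finally invoking boundedness of $\Omega$ to pass from $\Leb{q_0}$-integrability down to $\Leb{q}$-integrability when $q\leq q_0$, yields (i).

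For part (ii), I would use a factorization. Choose an intermediate pair $(\gamma,r)$ with $\beta<\gamma<\alpha$ such that
\[
\frac{1}{p}-\frac{\alpha}{n}<\frac{1}{r}-\frac{\gamma}{n}\leq \frac{1}{q}-\frac{\beta}{n},
\]
which is possible because inequality \eqref{eq:Sobolev-embedding-condition} is assumed strict. The embedding $\Bessel{\alpha}{p}(\Omega)\hookrightarrow \Bessel{\gamma}{r}(\Omega)$ is compact by the Rellich-Kondrachov theorem for Bessel potential spaces on bounded Lipschitz domains; this follows from extending to a fixed large ball $B_R\supset \overline{\Omega}$ and using the standard compactness there (e.g., via a frequency truncation, the Arzel\`a-Ascoli theorem applied to the low-frequency part, and a small norm bound on the high-frequency part). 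Composition with the continuous embedding $\Bessel{\gamma}{r}(\Omega)\hookrightarrow \Bessel{\beta}{q}(\Omega)$ from part (i) delivers the claimed compact embedding into $\Bessel{\beta}{q}(\Omega)$.

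For part (iii), I would cover $\partial\Omega$ by finitely many Lipschitz graph patches $U_j$, each parametrized bi-Lipschitzly by an open subset of $\mathbb{R}^{n-1}$ via a chart $T_j$, and fix a smooth partition of unity $\{\phi_j\}$ subordinate to this cover. Pushing forward $\phi_j g$ to $\mathbb{R}^{n-1}$ preserves, up to multiplicative constants depending only on the Lipschitz character of $\Omega$, both the $\Leb{p}$ norm and the double integral defining $\bes{\alpha}{p}(\partial\Omega)$, thanks to the bi-Lipschitz comparability $|x-y|\simeq |T_j(x)-T_j(y)|$ and uniform bounds on the surface-measure Jacobians. The standard Sobolev-Besov embedding on $\mathbb{R}^{n-1}$ then gives $\phi_j g \in \bes{\beta}{q}(\mathbb{R}^{n-1})$ (or $\Leb{q}(\mathbb{R}^{n-1})$ when $\beta=0$), and summing over $j$ yields the global estimate. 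The principal technical obstacle is supplying the extension operator $E$ at non-integer $\alpha$: while Stein's construction handles integer orders directly, the fractional case either requires Rychkov's atomic construction or must be obtained by complex interpolation, which itself depends on matching interpolation scales on $\Omega$ and $\mathbb{R}^n$. Once this ingredient is granted, the remainder of the proof is a short chain of classical arguments.
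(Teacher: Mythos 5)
Your argument is correct and takes essentially the same route as the paper, which simply cites standard references (Triebel) for (i) and (ii) and proves (iii) exactly as you do, by transferring the known $\mathbb{R}^{n-1}$ embedding to $\partial\Omega$ via bi-Lipschitz charts and a partition of unity. One remark: since the paper defines $\Bessel{\alpha}{p}(\Omega)$ for $\alpha\geq 0$ as a restriction space with the quotient norm, part (i) needs no bounded linear extension operator at all---a near-minimizing extension supplied by the definition of the norm already suffices---so the ``principal technical obstacle'' you flag does not actually arise.
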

\begin{proof}
The proofs of (i) and (ii) can be found in the standard references (see e.g. \cite[p.60  and Proposition 4.6]{MR2250142}).     To prove (iii), we recall  the following embedding result:
\[ \bes{\alpha}{p}(\mathbb{R}^{n-1})\hookrightarrow \begin{cases}
\bes{\beta}{q}(\mathbb{R}^{n-1})  & \text{if } \beta>0,\\
\Leb{q}(\mathbb{R}^{n-1}) & \text{if }  0=\beta <\alpha
\end{cases}
\]
whenever  $(\alpha,p)$ and $(\beta,q)$ satisfy
\[    0\leq \beta\leq \alpha<1,\quad 1<p,q<\infty,\quad \text{and }\quad  \frac{1}{p}-\frac{\alpha}{n-1} = \frac{1}{q}-\frac{\beta}{n-1}\]
 (see e.g \cite[p.60 and Theorem 1.73]{MR2250142} and \cite[Theorem 7.34]{MR2424078}).
Then (iii) follows by using a partition of unity for the boundary $\partial\Omega$. 
\end{proof} 

{The following  trace and extension theorems are due to Jonsson \cite[Theorems 1 and 2]{MR543499} (see also Jonsson-Wallin \cite[Theorems 1 and 3, Chapter VII]{MR820626}) and the characterization of $\oBessel{\alpha}{p}(\Omega)$ is proved by Jerison-Kenig \cite[Proposition 3.3]{MR1331981}.}

\begin{thm}
\label{thm:trace-theorem} Let  $(\alpha,p)$ satisfy
\[ 1<p<\infty\quad  \text{and }\quad \frac{1}{p}<\alpha<1+\frac{1}{p}.\]
\begin{enumerate}[label={\textnormal{(\roman*)}},ref={\roman*},leftmargin=2em]
\item There exists a unique bounded linear operator $\Tr : \Bessel{\alpha}{p}(\Omega)\rightarrow \bes{\alpha-1/p}{p}(\partial\Omega)$ such that
\[ \Tr u = u\,|_{\partial\Omega}\quad \text{for all } u\in C^\infty(\overline{\Omega}).  \] 
\item $\oBessel{\alpha}{p}(\Omega)$ is the space of all functions $u$ in $\Bessel{\alpha}{p}(\Omega)$ with $\Tr u=0$.
\item There exists a bounded linear operator $\mathcal{E}:\bes{\alpha-1/p}{p}(\partial\Omega)\rightarrow \Bessel{\alpha}{p}(\Omega)$ such that
\[ \Tr \circ \,\mathcal{E}=I. \] 
\end{enumerate}
\end{thm}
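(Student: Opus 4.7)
The plan is to reduce to the half-space case via a partition of unity flattening the boundary, invoking the classical trace theorem for $\Bessel{\alpha}{p}(\mathbb{R}^n_+)$. Since $\partial\Omega$ admits a finite cover by coordinate cylinders $U_j$ in each of which $\partial\Omega$ is the graph of a Lipschitz function $\phi_j$, we can fix a bi-Lipschitz map $\Phi_j:U_j\to Q$ sending $U_j\cap\Omega$ onto a half-cube and $U_j\cap\partial\Omega$ onto a flat face. Let $\{\eta_j\}$ be a partition of unity subordinate to $\{U_j\}\cup\{U_0\}$ with $U_0\Subset\Omega$. The key Fourier-analytic fact we borrow from the half-space theory is that for $1/p<\alpha<1+1/p$ the restriction to $\{x_n=0\}$ defines a bounded surjection
\[
\Tr_+:\Bessel{\alpha}{p}(\mathbb{R}^n_+)\twoheadrightarrow\bes{\alpha-1/p}{p}(\mathbb{R}^{n-1}),
\]
with a bounded right inverse (obtained via the Poisson or Whitney extension).

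For part (i), start with $u\in C^{\infty}(\overline{\Omega})$, write $u=\sum_j\eta_j u$, and set $u_j=(\eta_j u)\circ\Phi_j^{-1}$; the half-space trace theorem applied to each $u_j$ yields the pointwise-defined boundary values $u|_{\partial\Omega}$ in $\bes{\alpha-1/p}{p}(\partial\Omega)$ with an estimate
\[
\norm{u|_{\partial\Omega}}{\bes{\alpha-1/p}{p}(\partial\Omega)}\le C\norm{u}{\Bessel{\alpha}{p}(\Omega)}.
\]
Density of $C^{\infty}(\overline{\Omega})$ in $\Bessel{\alpha}{p}(\Omega)$ then gives a unique bounded extension $\Tr$. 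For part (iii), we define $\mathcal{E}$ by reversing this procedure: given $g\in\bes{\alpha-1/p}{p}(\partial\Omega)$, transfer $\eta_j g$ to $\mathbb{R}^{n-1}$ via $\Phi_j$, apply the half-space right inverse to obtain extensions in $\Bessel{\alpha}{p}(\mathbb{R}^n_+)$, and pull back and glue with a cutoff to produce $\mathcal{E}g\in\Bessel{\alpha}{p}(\Omega)$ with $\Tr\,\mathcal{E}g=g$.

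For part (ii), the inclusion $\oBessel{\alpha}{p}(\Omega)\subset\{u:\Tr u=0\}$ is immediate from approximation by $C_{0}^{\infty}(\Omega)$ functions, which the trace annihilates. The reverse inclusion is proved by showing that if $u\in\Bessel{\alpha}{p}(\Omega)$ with $\Tr u=0$, then the zero extension $\tilde{u}$ lies in $\Bessel{\alpha}{p}(\mathbb{R}^n)$ and is supported in $\overline{\Omega}$. Again after localization and flattening, the question reduces to the half-space assertion: a function $v\in\Bessel{\alpha}{p}(\mathbb{R}^n_+)$ whose trace vanishes extends by zero across $\{x_n=0\}$ to a function in $\Bessel{\alpha}{p}(\mathbb{R}^n)$. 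This is standard since we can approximate $v$ in the $\Bessel{\alpha}{p}$-norm by compactly supported smooth functions in $\{x_n>0\}$, whose zero-extensions converge in $\Bessel{\alpha}{p}(\mathbb{R}^n)$.

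The main obstacle is justifying that bi-Lipschitz coordinate changes preserve $\Bessel{\alpha}{p}$ and $\bes{\alpha-1/p}{p}$ with equivalent norms in the full range $1/p<\alpha<1+1/p$, since the spaces are only defined via Bessel potentials and not through a chain-rule-friendly derivative. The resolution is to use equivalent Slobodeckij-type (integral-difference) norms: when $0<\alpha<1$ the Gagliardo seminorm is manifestly invariant under bi-Lipschitz maps, and when $1<\alpha<1+1/p$ one uses the equivalence $\Bessel{\alpha}{p}=\{u\in\Leb{p}:\nabla u\in\Bessel{\alpha-1}{p}\}$ together with the Gagliardo characterization of $\Bessel{\alpha-1}{p}$ (as $\alpha-1\in(0,1/p)$) to reduce to the first case via Rademacher's theorem. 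Modulo this invariance—carried out in Jerison–Kenig \cite{MR1331981}—the patching argument outlined above completes the proof.
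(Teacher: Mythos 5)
The paper offers no proof of this statement: Theorem \ref{thm:trace-theorem} is quoted directly from Theorem 3.1 and Proposition 3.3 of Jerison--Kenig \cite{MR1331981}, so the only comparison available is between your argument and the cited one. On its own terms, your localize-and-flatten proof has a genuine gap, and it occurs exactly in the range $1<\alpha<1+1/p$ where the theorem is hardest and where the paper actually needs it.

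The gap is the bi-Lipschitz invariance you postpone to the final paragraph. Two separate problems arise. First, for $0<\alpha<1$ and $p\neq 2$ the Bessel potential space $\Bessel{\alpha}{p}=F^{\alpha}_{p,2}$ is \emph{not} the Gagliardo--Slobodeckij space $B^{\alpha}_{p,p}$, so the ``manifest'' invariance of the Gagliardo seminorm says nothing about $\Bessel{\alpha}{p}$; invariance in this range is true but must be obtained differently (e.g.\ by complex interpolation between $\Leb{p}$ and $\Sob{p}{1}$, both of which are bi-Lipschitz invariant). Second, and fatally, for $1<\alpha<1+1/p$ the invariance is simply false. Your reduction computes $\nabla(u\circ\Phi_j)=(D\Phi_j)^{T}\,(\nabla u)\circ\Phi_j$ and needs multiplication by the merely bounded measurable matrix $D\Phi_j$ to preserve $\Bessel{\alpha-1}{p}$ with $\alpha-1>0$; bounded measurable functions are not pointwise multipliers of $\Bessel{s}{p}$ for any $s>0$. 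Concretely, take $n=2$, the flattening $\Phi(x_1,x_2)=(x_1,x_2+\phi(x_1))$ with $\phi(t)=\int_0^t\chi_E$ for a fat Cantor set $E$ (so $\phi$ is Lipschitz), and $u(y)=y_2$ near the relevant region: then $u$ is smooth, yet $\partial_1(u\circ\Phi)=\chi_E(x_1)$, whose $\Leb{p}$-modulus of continuity can be made to decay slower than any power, so $u\circ\Phi\notin\Bessel{1+s}{p}$ locally for any $s>0$. Hence the half-space trace theorem cannot be transported to $\Omega$ by composition, and parts (i)--(iii) of your argument all break on $1<\alpha<1+1/p$; Rademacher's theorem does not repair this. (A smaller issue: in part (ii) the density of $C_0^\infty(\mathbb{R}^n_+)$ in the trace-zero subspace of $\Bessel{\alpha}{p}(\mathbb{R}^n_+)$ is essentially the statement being proved and fails once $\alpha\geq 1+1/p$, so it deserves an actual argument rather than ``this is standard.'') A correct proof must, as in \cite{MR1331981}, work on the Lipschitz domain directly --- estimating the intrinsic Besov seminorm on $\partial\Omega$ and constructing the extension operator without flattening --- or else confine the flattening argument to $\alpha\leq 1$ and treat $\alpha>1$ by a different mechanism.
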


Immediately from Theorems \ref{thm:Sobolev-embedding} and \ref{thm:trace-theorem}, we obtain the following result which is necessary for our study on the Dirichlet problem \eqref{eq:D-1} with $\Leb{2}$-boundary data.

\begin{cor}
\label{cor:trace-control}Let $(\alpha,p)$ satisfy
\[
1<p<\infty,\quad \frac{1}{p}<\alpha<1+\frac{1}{p} ,\quad \text{and }\quad  \frac{1}{p}-\frac{\alpha}{n}\leq \frac{n-1}{2n}.
\]
Then for all $u\in \Bessel{\alpha}{p}(\Omega)$, we have
\[u\in \Leb{\frac{2n}{n-1}}(\Omega),\quad  \Tr u \in \Leb{2}(\partial\Omega),\]
and
\begin{equation}\label{eq:Tr-constant}
\norm u{\Leb{\frac{2n}{n-1}}(\Omega)}+\norm{\Tr u}{\Leb 2(\partial\Omega)}\le C\norm u{\Bessel{\alpha}p(\Omega)}
\end{equation}
for some constant $C=C(n,\alpha,p,\Omega)>0$. 
\end{cor} 
\begin{rem*}
 It is obvious from the defintion that the embedding constant in Theorem \ref{thm:Sobolev-embedding} (i) depends on $\Omega$ in terms of its volume only. Our proof of Theorem \ref{thm:Sobolev-embedding} (iii) shows that the embedding constant depends  on $\Omega$ through its Lipschitz character. Also, it was shown by Jonsson \cite[Theorems 1 and 2]{MR543499}  that the  norm of the trace operator in Theorem \ref{thm:trace-theorem} (i) depends only on $n$, $\alpha$, $p$, and the Lipschitz character of $\Omega$ (see also Jonsson-Wallin \cite[Theorems 1 and 3, Chapter VII]{MR820626}). Therefore, the constant $C$ in \eqref{eq:Tr-constant} depends on the Lipschitz domain $\Omega$ only through its volume and Lipschitz character.
\end{rem*}

For a smooth vector field $F\in C^{\infty}(\overline{\Omega})^n$, integration by parts gives
\begin{equation}\label{eq:integration-by-parts-formula}
\int_{\partial \Omega} (F\cdot \nu) \phi \myd{\sigma} =  \int_\Omega F \cdot \nabla \phi \myd{x} + \int_\Omega  (\Div F) \phi \myd{x} \quad \text{for all } \phi \in C^{\infty}(\overline{\Omega}).
\end{equation}
This formula can be generalized for $F\in \Bessel{\alpha}{p}(\Omega)^n$ having divergence in $\Bessel{\beta-1}{q}(\Omega)$ with $(\alpha,p)$ and $(\beta,q)$ satisfying certain conditions. To show this, we first note that for $1<p<\infty$ and $-1/p'<\alpha<1/p$,   the pairing between $\Bessel{\alpha}{p}(\Omega)$ and $\Bessel{-\alpha}{p'}(\Omega)$ is well-defined. Indeed,  since $\oBessel{\beta}{q}(\Omega) =\Bessel{\beta}{q}(\Omega)$ for $0\leq \beta <1/q$, we have
\begin{equation}\label{eq:pairing-1}
\Bessel{-\alpha}{p'}(\Omega) = \left[\oBessel{\alpha}{p}(\Omega)\right]' = \left[\Bessel{\alpha}{p}(\Omega)\right]' \quad \text{if } \alpha \geq 0
\end{equation}
and
\begin{equation}\label{eq:pairing-2}
\Bessel{\alpha}{p}(\Omega) = \left[\oBessel{-\alpha}{p'}(\Omega)\right]' = \left[\Bessel{-\alpha}{p'}(\Omega)\right]' \quad \text{if } \alpha < 0.
\end{equation}
Moreover, it was shown in \cite[Lemma 9.1]{MR1658089} that for $\alpha>0$ and $1<p<\infty$, the gradient operator
\begin{equation}\label{eq:gradient-bounded}
\nabla : \Bessel{\alpha}{p}(\Omega)\rightarrow \Bessel{\alpha-1}{p}(\Omega)^n
\end{equation} is well-defined and bounded.  These observations enable us to define a generalized normal trace of a vector field $F$ under some additional regularity assumption.
\begin{prop}\label{prop:normal-trace}
Let  $(\alpha,p)$ and $(\beta,q)$ satisfy
\begin{equation}\label{eq:normal-trace-1}
\begin{gathered}
1<p<\infty,\quad -\frac{1}{p'}<\alpha<\frac{1}{p},\\
\alpha \leq \beta,\quad  0<\frac{1}{q}<\beta\leq 1,\quad \text{and }\quad \frac{1}{p}-\frac{\alpha}{n}\geq \frac{1}{q}-\frac{\beta}{n}.
\end{gathered}
\end{equation}
Assume that  $F\in \Bessel{\alpha}{p}(\Omega)^n$ and $\Div F \in \Bessel{\beta-1}{q}(\Omega)$. Then there exists a unique $\gamma_{\nu}(F) \in \bes{\alpha-1/p}{p}(\partial\Omega)$ such that
\begin{equation}\label{eq:normal-trace-identity}
      \action{\gamma_{\nu}(F),\Tr \phi} =  \action{F,\nabla \phi}+\action{\Div F,\phi}\quad \text{for all } \phi \in \Bessel{1-\alpha}{p'}(\Omega).
\end{equation}
Moreover, we have
\[     \norm{\gamma_{\nu}(F)}{\bes{\alpha-1/p}{p}(\partial\Omega)} \leq C ( \norm{F}{\Bessel{\alpha}{p}(\Omega)} + \norm{\Div F}{\Bessel{\beta-1}{q}(\Omega)})  \]
for some constant $C=C(n,\alpha,\beta,p,q,\Omega)>0$. In addition, if $F\in \Bessel{\beta}{q}(\Omega)^n$,    then
\[
 \Tr F \cdot \nu \in \bes{\alpha-1/p}{p}(\partial\Omega) \quad\mbox{and}\quad \gamma_{\nu}(F) = \Tr F \cdot \nu .
\]
\end{prop}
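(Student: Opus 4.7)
My plan is to construct $\gamma_{\nu}(F)$ by first defining an auxiliary bounded linear functional
\[T(\phi):=\langle F,\nabla\phi\rangle+\langle\Div F,\phi\rangle\]
on $\Bessel{1-\alpha}{p'}(\Omega)$ via the right-hand side of \eqref{eq:normal-trace-identity}, then showing $T$ vanishes on the traceless subspace $\oBessel{1-\alpha}{p'}(\Omega)$, and finally descending $T$ to the boundary using the trace theorem. By duality, the descended functional will automatically belong to $[\bes{1/p-\alpha}{p'}(\partial\Omega)]'=\bes{\alpha-1/p}{p}(\partial\Omega)$.

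To see that $T$ is well-defined and bounded on $\Bessel{1-\alpha}{p'}(\Omega)$, I argue as follows. Since $-1/p'<\alpha<1/p$, relations \eqref{eq:pairing-1}--\eqref{eq:pairing-2} give a valid $\Bessel{\alpha}{p}$--$\Bessel{-\alpha}{p'}$ duality, and \eqref{eq:gradient-bounded} yields $\nabla\phi\in \Bessel{-\alpha}{p'}(\Omega)^{n}$, handling the first term. For the second, $0<1/q<\beta$ gives $0\leq 1-\beta<1/q'$, so by \eqref{eq:zero-equal}, $\Bessel{\beta-1}{q}(\Omega)=[\Bessel{1-\beta}{q'}(\Omega)]'$, and the embedding $\Bessel{1-\alpha}{p'}(\Omega)\hookrightarrow \Bessel{1-\beta}{q'}(\Omega)$ (Theorem \ref{thm:Sobolev-embedding}(i), using $\alpha\leq \beta$ and $1/p-\alpha/n\geq 1/q-\beta/n$) closes the estimate. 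For $\phi\in C_{0}^{\infty}(\Omega)$, the identity $T(\phi)=0$ reduces to the definition of distributional divergence; by density of $C_{0}^{\infty}(\Omega)$ in $\oBessel{1-\alpha}{p'}(\Omega)$ and continuity of $T$, we conclude $T$ vanishes on $\oBessel{1-\alpha}{p'}(\Omega)$. The trace theorem applies here because $-1/p'<\alpha<1/p$ rearranges to $1/p'<1-\alpha<1+1/p'$: Theorem \ref{thm:trace-theorem} provides a bounded surjection $\Tr:\Bessel{1-\alpha}{p'}(\Omega)\to \bes{1/p-\alpha}{p'}(\partial\Omega)$ with kernel $\oBessel{1-\alpha}{p'}(\Omega)$ and a bounded right inverse $\mathcal{E}$. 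Hence $T$ descends to the required $\gamma_{\nu}(F)$ satisfying \eqref{eq:normal-trace-identity}, with the quantitative estimate following by testing with $\phi=\mathcal{E}(g)$; uniqueness is immediate from surjectivity of $\Tr$.

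For the last claim, assume $F\in \Bessel{\beta}{q}(\Omega)^{n}$ and pick $F_{k}\in C^{\infty}(\overline{\Omega})^{n}$ converging to $F$ in $\Bessel{\beta}{q}$. The embedding $\Bessel{\beta}{q}\hookrightarrow \Bessel{\alpha}{p}$ and boundedness of $\Div$ from $\Bessel{\beta}{q}$ into $\Bessel{\beta-1}{q}$ (via \eqref{eq:gradient-bounded}) give $F_{k}\to F$ in $\Bessel{\alpha}{p}$ and $\Div F_{k}\to \Div F$ in $\Bessel{\beta-1}{q}$, so the continuity of the constructed map yields $\gamma_{\nu}(F_{k})\to \gamma_{\nu}(F)$ in $\bes{\alpha-1/p}{p}(\partial\Omega)$. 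For each smooth $F_{k}$, extending \eqref{eq:integration-by-parts-formula} by density in $\phi$ shows $\gamma_{\nu}(F_{k})=\Tr F_{k}\cdot \nu$; since $\Tr F_{k}\to \Tr F$ in $\bes{\beta-1/q}{q}(\partial\Omega)$ by Theorem \ref{thm:trace-theorem}, passing to the limit gives $\gamma_{\nu}(F)=\Tr F\cdot \nu$. The main technical subtlety is precisely here: because $\nu$ is merely $L^{\infty}(\partial\Omega)$ on a Lipschitz boundary, the product $\Tr F\cdot \nu$ must be interpreted as an element of $\bes{\alpha-1/p}{p}(\partial\Omega)$ via the dual pairing against $\Tr\phi\in \bes{1/p-\alpha}{p'}(\partial\Omega)$, and verifying that this pairing closes requires a careful chain of Besov embeddings on $\partial\Omega$ under the given exponent constraints.
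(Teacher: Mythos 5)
Your proposal follows essentially the same route as the paper's proof: both construct $\gamma_{\nu}(F)$ by bounding the functional $\phi\mapsto \action{F,\nabla\phi}+\action{\Div F,\phi}$ on $\Bessel{1-\alpha}{p'}(\Omega)$ via the $\Bessel{\alpha}{p}$--$\Bessel{-\alpha}{p'}$ duality together with the embedding $\Bessel{1-\alpha}{p'}(\Omega)\hookrightarrow \oBessel{1-\beta}{q'}(\Omega)$, observe that it annihilates $\oBessel{1-\alpha}{p'}(\Omega)$ by the definition of the distributional divergence, and descend to the boundary through $\Tr$ and $\mathcal{E}$, with the final identification $\gamma_{\nu}(F)=\Tr F\cdot\nu$ obtained by smooth approximation from \eqref{eq:integration-by-parts-formula}. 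The one step you name but do not execute --- showing that multiplication by $\nu\in L^{\infty}(\partial\Omega)$ carries $\Tr F\in\bes{\beta-1/q}{q}(\partial\Omega)$ into $\bes{\alpha-1/p}{p}(\partial\Omega)$ --- is handled in the paper by choosing an exponent $r$ (namely $r=\frac{n-1}{n/q-\beta}$ if $\beta<n/q$, and $r=\frac{n-1}{n/p-\alpha}+1$ otherwise) so that $\bes{\beta-1/q}{q}(\partial\Omega)\hookrightarrow\Leb{r}(\partial\Omega)$ and $\bes{1/p-\alpha}{p'}(\partial\Omega)\hookrightarrow\Leb{r'}(\partial\Omega)$, after which H\"older's inequality closes the pairing; this exponent arithmetic should be included for the argument to be complete.
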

\begin{proof}
Let $\phi \in \Bessel{1-\alpha}{p'}(\Omega)$ be given. Then by \eqref{eq:pairing-1}, \eqref{eq:pairing-2}, and \eqref{eq:gradient-bounded}, the pairing $\action{F,\nabla \phi}$ is well-defined and
\[     \left|\action{F,\nabla \phi} \right| \leq C\norm{F}{\Bessel{\alpha}{p}(\Omega)} \norm{\phi}{\Bessel{1-\alpha}{p'}(\Omega)}\quad  \]
for some constant $C=C(n,\alpha,p,\Omega)>0$. Since \eqref{eq:normal-trace-1} holds,   it follows from Theorem \ref{thm:Sobolev-embedding} and \eqref{eq:zero-equal} that
\begin{equation}\label{eq:embeeding-normal-trace}
\Bessel{1-\alpha}{p'}(\Omega) \hookrightarrow   \Bessel{1-\beta}{q'}(\Omega) = \oBessel{1-\beta}{q'}(\Omega).
\end{equation}
Since $\Div F \in \Bessel{\beta-1}{q}(\Omega)$,  we thus have
\[   \left|\action{\Div F,\phi}\right|\leq \norm{\Div F}{\Bessel{\beta-1}{q}(\Omega)} \norm{\phi}{\Bessel{1-\beta}{q'}(\Omega)} \leq C \norm{\Div F}{\Bessel{\beta-1}{q}(\Omega)} \norm{\phi}{\Bessel{1-\alpha}{p'}(\Omega)}.\]

We now define $\gamma_{\nu} (F) : \bes{1/p-\alpha}{p'}(\partial\Omega) \rightarrow \mathbb{R}$ by
\begin{equation*}
\action{\gamma_{\nu}(F),\eta} = \action{F,\nabla (\mathcal{E}\eta)}+\action{\Div F,\mathcal{E}\eta}\quad \text{for all } \eta \in \bes{1/p-\alpha}{p'}(\partial\Omega),
\end{equation*}
where $\mathcal{E}$ is the extension operator given in Theorem \ref{thm:trace-theorem}. By \eqref{eq:embeeding-normal-trace} and Theorem \ref{thm:trace-theorem}, there exists a constant $C=C(n,\alpha,\beta,p,q,\Omega)>0$ such that
\begin{align*}
\left|\action{\gamma_{\nu}(F),\eta} \right| &\leq C \left(\norm{F}{\Bessel{\alpha}{p}(\Omega)}+ \norm{\Div F}{\Bessel{\beta-1}{q}(\Omega)} \right)\norm{\mathcal{E}\eta}{\Bessel{1-\alpha}{p'}(\Omega)}\\
&\leq C \left(\norm{F}{\Bessel{\alpha}{p}(\Omega)}+ \norm{\Div F}{\Bessel{\beta-1}{q}(\Omega)} \right)\norm{\eta}{\bes{1/p-\alpha}{p'}(\partial\Omega)}
\end{align*}
for all $\eta \in \bes{1/p-\alpha}{p'}(\partial\Omega)$.   It follows that  $\gamma_{\nu}(F)\in \bes{\alpha-1/p}{p}(\partial\Omega)$.

Next, we prove that $\gamma_{\nu}(F)$ is the unique element in $\bes{\alpha-1/p}{p}(\partial\Omega)$ satisfying \eqref{eq:normal-trace-identity}. Let $\phi \in \Bessel{1-\alpha}{p'}(\Omega)$ be fixed. Then by Theorem \ref{thm:trace-theorem}, $\Tr \phi \in \bes{1/p-\alpha}{p'}(\partial\Omega)$ and $\phi -\mathcal{E}(\Tr \phi)\in \oBessel{1-\alpha}{p'}(\Omega)$. Hence by the definition of $\Div F$, we have
\[    \action{F,\nabla(\phi-\mathcal{E}(\Tr \phi))} + \action{\Div F, \nabla (\phi-\mathcal{E}(\Tr \phi))}=0.\]
This implies that
\[   \action{\gamma_{\nu}(F),\Tr \phi} = \action{F,\nabla(\mathcal{E}(\Tr \phi))} + \action{\Div F,\mathcal{E}(\Tr \phi)} = \action{F,\nabla \phi} + \action{\Div F, \phi},  \]
which shows that $\gamma_{\nu}(F)$ satisfies identity \eqref{eq:normal-trace-identity}.  To show the uniqueness part, suppose that  $g\in \bes{\alpha-1/p}{p}(\partial\Omega)$  satisfies
\[    \action{g,\Tr \phi}=0\quad \text{for all } \phi \in \Bessel{1-\alpha}{p'}(\Omega). \]  Then for every $h\in \bes{1/p-\alpha}{p'}(\partial\Omega)$, we have
\[  \mathcal{E}h \in \Bessel{1-\alpha}{p'}(\Omega)\quad \text{and so}\quad  \action{g,h}=\action{g,\Tr(\mathcal{E}h)}=0.\]
This proves the uniqueness part.

Suppose in addition that $F\in \Bessel{\beta}{q}(\Omega)^n$.  Let us take
\[   r=\begin{cases}
          \frac{n-1}{n/q-\beta} & \text{if } \beta<{n}/{q}, \\
          \frac{n-1}{n/p-\alpha}+1 &  \text{if } \beta\geq {n}/{q}.
\end{cases}\]
Then by \eqref{eq:normal-trace-1} and Theorem \ref{thm:Sobolev-embedding}, we have
\begin{equation}\label{eq:normal-trace-more-3}
\bes{\beta-1/q}{q}(\partial\Omega) \hookrightarrow \Leb{r}(\partial\Omega)\quad \text{and }\quad \bes{1/p-\alpha}{p'} (\partial\Omega)\hookrightarrow \Leb{r'}(\partial\Omega).
\end{equation}
Hence  by H\"older's inequality, \eqref{eq:normal-trace-more-3}, and Theorem \ref{thm:trace-theorem}, there exists a constant $C=C(n,\alpha,\beta,p,q,\Omega)>0$ such that
\begin{equation}\label{eq:normal-trace-more-1}
  \left|\int_{\partial \Omega} ( \Tr F \cdot \nu) \eta \myd{\sigma} \right| \leq C \norm{F}{\Bessel{\beta}{q}(\Omega)} \norm{\eta}{\bes{1/p-\alpha}{p'}(\partial\Omega)}
\end{equation}
for all $\eta \in \bes{1/p-\alpha}{p'}(\partial\Omega)$.  This shows that $\Tr F \cdot \nu \in \bes{\alpha-1/p}{p}(\partial\Omega)$.  Since $\nabla:\Bessel{\beta}{q}(\Omega)\rightarrow \Bessel{\beta-1}{q}({\Omega})^n$ is bounded, $\Bessel{1-\alpha}{p'}(\Omega)\hookrightarrow \oBessel{1-\beta}{q'}(\Omega)$,  and $C^\infty(\overline{\Omega})^n$ is dense in $\Bessel{\beta}{q}(\Omega)^n$,  a standard density argument enables us to deduce from \eqref{eq:integration-by-parts-formula}  that
\begin{equation}\label{eq:normal-trace-more-2}
     \int_{\partial \Omega} ( \Tr F \cdot \nu) \Tr \phi\myd{\sigma}  = \action{F,\nabla \phi} + \action{\Div F, \phi}
\end{equation}
for all $\phi \in \Bessel{1-\alpha}{p'}(\Omega)$. From \eqref{eq:normal-trace-identity} and \eqref{eq:normal-trace-more-2}, we get
\[   \action{\Tr F \cdot \nu -\gamma_{\nu}(F),\Tr \phi} =0\quad \text{for all } \phi \in \Bessel{1-\alpha}{p'}(\Omega). \]
Hence it follows that $\Tr F \cdot \nu=\gamma_{\nu}(F)$. This completes the proof of Proposition \ref{prop:normal-trace}.
\end{proof}

\subsection{The Poisson equation}
We first consider the  Dirichlet problem for the Poisson equation
\begin{equation}\label{eq:D-1-basic}
\left\{\begin{alignedat}{2}
-\triangle u& =f & \quad & \text{in }\Omega,\\
 u   & =u_{\Di} & \quad & \text{on }\partial\Omega.
\end{alignedat}\right.
\end{equation}
Here   the boundary condition is satisfied in the sense of nontangential convergence or  trace which will be  described in Theorems \ref{thm:L2-nontangential}, \ref{thm:Dirichlet-Poisson}, and \ref{thm:Dirichlet-Poisson-2}, respectively.

Let $\left\{ \gamma\left(x\right):x\in\partial\Omega\right\} $ be a regular family of cones associated with the Lipschitz domain $\Omega$ (see \cite[Section 0]{MR769382}) and let $u$ be a function on $\Omega$.
The nontangential maximal function of $u$ is defined by
\[
u^{*}(x)=\sup_{y\in\gamma(x)}\left|u(y)\right|\quad\text{for all }x\in\partial\Omega.
\]
If there is a function $g$ on $\partial\Omega$ such that
\[
\lim_{z\rightarrow x,z\in\gamma(x)}u(z)=g(x)\quad\text{for a.e. }x\in\partial\Omega,
\]
we write
\[
u\rightarrow g\quad\text{nontangentially a.e. on }\partial\Omega.
\]

The following proposition shows, in particular,  that harmonic functions in $\Bessel{1/2}{2}(\Omega)$ have nontangential limits (see \cite[Corollary 5.5]{MR1331981}).
\begin{prop}
\label{prop:JK-nontangential-maximal-function}Suppose that $u$ is
a harmonic function in $\Omega$. Then $u^{*}\in\Leb 2(\partial\Omega)$
if and only if $u\in\Bessel{1/2}2(\Omega)$. In each case, there exists a function $g \in\Leb 2(\partial\Omega)$
such that $u\rightarrow g$ nontangentially a.e. on $\partial\Omega$.
\end{prop}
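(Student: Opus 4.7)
The plan is to prove the equivalence via the square function characterization of the Bessel potential norm for harmonic functions, together with Dahlberg's square function--nontangential maximal function estimate on Lipschitz domains.

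First, I would establish the following characterization of $\Bessel{1/2}{2}(\Omega)$ restricted to harmonic functions:
\[
\|u\|_{\Bessel{1/2}{2}(\Omega)}^{2} \;\approx\; \|u\|_{\Leb{2}(\Omega)}^{2} + \int_\Omega |\nabla u(x)|^{2}\, \dist(x,\partial\Omega)\myd{x} \qquad (u \text{ harmonic in }\Omega).
\]
Since for $p=2$ one has $\Bessel{\alpha}{2}(\Omega) = (\Leb{2}(\Omega),\Bessel{1}{2}(\Omega))_{\alpha,2}$ by real interpolation, the upper bound follows from the $K$-functional description applied to dyadic shell decompositions of $u$ in $\{x:\dist(x,\partial\Omega)\approx 2^{-k}\}$, using interior Caccioppoli estimates to trade one derivative for the weight $\dist(\cdot,\partial\Omega)$. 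The reverse bound relies on Jerison--Kenig's solvability of the Dirichlet problem in $\Bessel{1}{2}(\Omega)$ with $\bes{1}{2}(\partial\Omega)$ data, used to reconstruct $u$ from a harmonic extension of a smoothed boundary value.

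Next, a Fubini-type calculation based on $\int_{\partial\Omega}\mathbf{1}_{\gamma(x)}(y)\myd{\sigma(x)}\approx \dist(y,\partial\Omega)^{n-1}$ yields
\[
\int_\Omega |\nabla u(x)|^{2}\, \dist(x,\partial\Omega)\myd{x} \;\approx\; \|Su\|_{\Leb{2}(\partial\Omega)}^{2},
\]
where $Su(x) = \bigl(\int_{\gamma(x)}|\nabla u(y)|^{2}\dist(y,\partial\Omega)^{2-n}\myd{y}\bigr)^{1/2}$ is the Lusin area function. Dahlberg's square function theorem on Lipschitz domains then furnishes $\|Su\|_{\Leb{2}(\partial\Omega)} \approx \|u^{*}\|_{\Leb{2}(\partial\Omega)} + |u(x_{0})|$ for harmonic $u$, with $x_{0}$ a fixed interior point. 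Chaining the three equivalences gives $u^{*}\in \Leb{2}(\partial\Omega) \iff u\in \Bessel{1/2}{2}(\Omega)$. Whenever either condition holds, the existence of a nontangential limit $g\in \Leb{2}(\partial\Omega)$ follows from the classical Fatou-type theorem for harmonic functions with $\Leb{2}$-bounded nontangential maximal function on Lipschitz domains, combined with the dominated convergence bound $|g|\le u^{*}$ a.e.\ on $\partial\Omega$.

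The main obstacle is the endpoint character of the pair $(\alpha,p)=(1/2,2)$, which lies outside the trace theorem (Theorem \ref{thm:trace-theorem}): $\Bessel{1/2}{2}(\Omega)$ does not admit a classical boundary trace, so one cannot reduce the problem to a boundary-value formulation and must rely instead on the harmonic structure through the weighted-Dirichlet-integral identity. This is ultimately made possible by Dahlberg's $\Leb{2}$-theory of harmonic measure on Lipschitz domains, which is the deep ingredient underpinning both the square function estimate and the Fatou-type nontangential convergence result.
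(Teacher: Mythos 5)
The paper offers no proof of this proposition: it is quoted directly from Jerison--Kenig \cite[Corollary 5.5]{MR1331981}, so the only meaningful comparison is with the argument behind that citation, and your outline is essentially that argument. The chain
\[
u\in\Bessel{1/2}{2}(\Omega)\ \Longleftrightarrow\ \int_\Omega |\nabla u|^2\,\dist(x,\partial\Omega)\myd{x}<\infty\ \Longleftrightarrow\ Su\in\Leb{2}(\partial\Omega)\ \Longleftrightarrow\ u^*\in\Leb{2}(\partial\Omega)
\]
--- real-variable characterization of $\Bessel{\alpha}{p}$ for harmonic functions, Fubini, Dahlberg's square-function theorem (correctly stated modulo the additive term $|u(x_0)|$), followed by the Fatou-type theorem for nontangential convergence when $u^*\in\Leb{2}(\partial\Omega)$ --- is exactly the right skeleton, and the last three links are correctly invoked. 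The one genuinely thin spot is the reverse inequality in the first link, i.e.\ that $u\in\Leb{2}(\Omega)$ together with a finite weighted Dirichlet integral forces $u\in\Bessel{1/2}{2}(\Omega)$: you propose to get this by solving the Dirichlet problem in $\Bessel{1}{2}(\Omega)$ for a ``smoothed boundary value,'' but at the endpoint pair $(1/2,2)$ the would-be boundary space degenerates to $\Leb{2}(\partial\Omega)$ and the trace/extension machinery of Theorem \ref{thm:trace-theorem} is unavailable, as you yourself observe in your closing paragraph; it is not clear what object you would extend or why the resulting harmonic extension would coincide with $u$ in the right topology. Jerison--Kenig prove this equivalence of norms by a direct real-variable argument (Whitney decomposition, interior derivative estimates for harmonic functions, and the intrinsic characterization of $\Bessel{\alpha}{p}(\mathbb{R}^n)$), not via boundary-value solvability, precisely to sidestep the failure of the trace theorem at this pair. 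With that step replaced by the correct argument or an explicit reference to \cite[Section 4]{MR1331981}, your proof is sound.
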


Proposition \ref{prop:JK-nontangential-maximal-function} leads us to introduce a function space
\[      \haSob{1/2}{2}(\Omega) = \left\{ u \in \Bessel{1/2}{2}(\Omega) :  \triangle u =0\text{ in } \Omega \right\},  \]
which is a closed subspace of $\Bessel{1/2}{2}(\Omega)$.

The following theorem, which originated from Dahlberg \cite{MR466593,MR562447}, summarizes the classical  solvability and regularity results for the  Dirichlet problem \eqref{eq:D-1-basic} with $f=0$ and $u_{\Di}  \in \Leb{2}(\partial\Omega)$ (see  \cite[Theorems 5.3, 5.15]{MR1331981} for a convenient reference).

\begin{thm}\label{thm:L2-nontangential} For every  $u_{\Di} \in \Leb{2}(\partial\Omega)$, there exists a unique $u\in \haSob{1/2}{2}(\Omega)$ such that   $u\rightarrow u_{\Di }$ nontangentially a.e. on $\partial\Omega$. Moreover, we have
\[
\norm{u^{*}}{\Leb 2(\partial\Omega)}\le C(n,\Omega)\norm{u_{\Di}}{\Leb 2(\partial\Omega)}.
\]
In addition,  if  $u_{\Di}\in\Leb q(\partial\Omega)$ and $2\leq q\leq \infty$, then
\begin{equation*}
u\in\Bessel{1/q}q(\Omega)\quad\text{and }\quad  \norm u{\Bessel{1/q}q(\Omega)}\le C\left(n,q,\Omega\right)\norm{u_{\Di}}{\Leb q(\partial\Omega)}.
\end{equation*}
\end{thm}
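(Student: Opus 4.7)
The plan is to deploy the classical theory of the $L^{q}$-Dirichlet problem on Lipschitz domains due to Dahlberg and Verchota. For the existence and the nontangential estimate with $q=2$, I would solve \eqref{eq:D-1-basic} with $f=0$ by the method of double layer potentials: let $\mathcal{D}$ denote the harmonic double layer potential and $K$ its nontangential boundary trace; a theorem of Verchota asserts that $\tfrac{1}{2}I+K$ is an isomorphism of $\Leb{2}(\partial\Omega)$. Setting $g=(\tfrac{1}{2}I+K)^{-1}u_{\Di}$ and $u=\mathcal{D}g$ produces a harmonic function in $\Omega$ with $\|u^{*}\|_{\Leb{2}(\partial\Omega)}\le C\|g\|_{\Leb{2}(\partial\Omega)}\le C\|u_{\Di}\|_{\Leb{2}(\partial\Omega)}$ and the jump relations give $u\to u_{\Di}$ nontangentially a.e.\ on $\partial\Omega$. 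Proposition \ref{prop:JK-nontangential-maximal-function} then upgrades $u$ to a member of $\haSob{1/2}{2}(\Omega)$.

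Uniqueness I would obtain as follows. If $w\in\haSob{1/2}{2}(\Omega)$ is harmonic with $w\to 0$ nontangentially a.e., then Proposition \ref{prop:JK-nontangential-maximal-function} applied in the reverse direction gives $w^{*}\in\Leb{2}(\partial\Omega)$. Hence $w$ is a solution of the $\Leb{2}$-Dirichlet problem with vanishing nontangential boundary values, and the uniqueness assertion in Dahlberg's theorem (equivalently, the $A_{\infty}$ property of harmonic measure with respect to surface measure) forces $w\equiv 0$ in $\Omega$.

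For the higher-integrability statement, I would invoke the $\Leb{q}$-version of Dahlberg's theorem, valid for every $q\in[2,\infty]$ on any bounded Lipschitz domain: for $u_{\Di}\in\Leb{q}(\partial\Omega)$ there exists a harmonic $u$ with $\|u^{*}\|_{\Leb{q}(\partial\Omega)}\le C(n,q,\Omega)\|u_{\Di}\|_{\Leb{q}(\partial\Omega)}$ converging nontangentially to $u_{\Di}$. To convert this maximal-function control into the Sobolev estimate $\|u\|_{\Bessel{1/q}{q}(\Omega)}\le C\|u_{\Di}\|_{\Leb{q}(\partial\Omega)}$, I would use the Jerison--Kenig identification of harmonic functions whose nontangential maximal function lies in $\Leb{q}(\partial\Omega)$ with the space $\Bessel{1/q}{q}(\Omega)\cap\{\triangle u=0\}$; Proposition \ref{prop:JK-nontangential-maximal-function} is precisely the $q=2$ manifestation of this principle, and the general case is proved in \cite{MR1331981} by interpolation (for $2\le q<\infty$) together with the elementary endpoint $q=\infty$, which reduces to the maximum principle and the standard $C^{1/q}$ estimate for harmonic functions up to a Lipschitz boundary.

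The main obstacle is hidden in the Lipschitz regularity of $\partial\Omega$: boundary Calder\'on--Zygmund estimates that are straightforward on smooth domains require on Lipschitz domains the deep input of Dahlberg's theorem on the $A_{\infty}$ character of harmonic measure (equivalently, solvability of the $\Leb{2}$-Dirichlet problem) and Verchota's invertibility of $\tfrac{1}{2}I+K$. Once these are in hand, the rest of the argument is a synthesis with the Sobolev/nontangential maximal function equivalence from \cite{MR1331981}, and no further genuinely new ingredient is needed for the statement of Theorem \ref{thm:L2-nontangential}.
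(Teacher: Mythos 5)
The paper gives no proof of this theorem: it is quoted as a classical result with citations to Dahlberg \cite{MR592391} and Jerison--Kenig \cite[Theorems 5.2, 5.15]{MR1331981}, and your sketch (Verchota's invertibility of $\tfrac12 I+K$ built on Coifman--McIntosh--Meyer, Dahlberg's $\Leb{q}$-solvability and uniqueness, and the Jerison--Kenig equivalence between nontangential maximal function control and membership in $\Bessel{1/q}{q}(\Omega)$ for harmonic functions) is precisely the standard argument contained in those references. So the proposal is correct and takes essentially the same route the paper relies on.
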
 

Jerison-Kenig \cite[Theorems 1.1, 1.3]{MR1331981} obtained the following optimal solvability results in   $\Bessel{\alpha}p(\Omega)$ for the inhomogeneous Dirichlet problem \eqref{eq:D-1-basic}. 
\begin{thm}\label{thm:Dirichlet-Poisson} Let $\Omega$ be a bounded Lipschitz domain in $\mathbb{R}^n$, $n\geq 3$. There is a number $\varepsilon=\varepsilon(\Omega)\in(0,1]$
such that if  $(\alpha,p)$ satisfies one of the following conditions
\begin{align*}
\mathrm{(i)}\quad & 1-\varepsilon\le2/p\leq1+\varepsilon\quad\text{and}\quad1/p<\alpha<1+1/p;\\
\mathrm{(ii)}\quad & 1+\varepsilon<2/p<2\quad\text{and}\quad3/p-1-\varepsilon<\alpha<1+1/p;\\
\mathrm{(iii)}\quad & 0<2/p<1-\varepsilon\quad\text{and}\quad1/p<\alpha<3/p+\varepsilon,
\end{align*}
then for every $f\in\Bessel{\alpha-2}p(\Omega)$ and $u_{\Di}\in\bes{\alpha-1/p}p(\partial\Omega)$,
there exists a unique function  $u\in\Bessel{\alpha}p(\Omega)$
such that
\[
\left\{\begin{alignedat}{2}
-\triangle u & =f & \quad & \text{in }\Omega,\\
\Tr   u  & =u_{\Di} & \quad & \text{on }\partial\Omega.
\end{alignedat}\right.
\]
Moreover, this function $u$ satisfies
\[
\norm u{\Bessel{\alpha}p(\Omega)}\le C\left(\norm f{\Bessel{\alpha-2}p(\Omega)}+\norm{u_{\Di}}{\bes{\alpha-1/p} p(\partial\Omega)}\right)
\]
for some constant $C=C(n,\alpha,p,\Omega)$.  If $\Omega$ is a $C^{1}$-domain, then the constant $\varepsilon$ may be taken one.  
\end{thm}

\begin{thm}\label{thm:Dirichlet-Poisson-2} Let $\Omega$ be a bounded Lipschitz domain in $\mathbb{R}^2$. There is a number $\varepsilon=\varepsilon(\Omega)\in(0,1]$
such that if  $(\alpha,p)$ satisfies one of the following conditions
\begin{align*}
\mathrm{(i)}\quad & 1-\varepsilon\le2/p\leq1+\varepsilon\quad\text{and}\quad1/p<\alpha<1+1/p;\\
\mathrm{(ii)}\quad & 1+\varepsilon<2/p<2\quad\text{and}\quad {2}/{p}-{(1+\varepsilon)}/{2}<\alpha<1+1/p;\\
\mathrm{(iii)}\quad & 0<2/p<1-\varepsilon\quad\text{and}\quad1/p<\alpha<2/p+(1+\varepsilon)/2,
\end{align*} 
then for every $f\in\Bessel{\alpha-2}p(\Omega)$ and $u_{\Di}\in\bes{\alpha-1/p}p(\partial\Omega)$,
there exists a unique function  $u\in\Bessel{\alpha}p(\Omega)$
such that
\[
\left\{\begin{alignedat}{2}
-\triangle u & =f & \quad & \text{in }\Omega,\\
\Tr   u  & =u_{\Di} & \quad & \text{on }\partial\Omega.
\end{alignedat}\right.
\]
Moreover, this function $u$ satisfies
\[
\norm u{\Bessel{\alpha}p(\Omega)}\le C\left(\norm f{\Bessel{\alpha-2}p(\Omega)}+\norm{u_{\Di}}{\bes{\alpha-1/p} p(\partial\Omega)}\right)
\]
for some constant $C=C(\alpha,p,\Omega)$.  If $\Omega$ is a $C^{1}$-domain, then the constant $\varepsilon$ may be taken one.  
\end{thm}

\begin{defn}\label{defn:admissible}
We denote by $\mathscr{A}$ the set of all pairs $(\alpha,p)$ that
satisfy one of the conditions (i), (ii), and (iii) in Theorem  \ref{thm:Dirichlet-Poisson} $(n\geq 3)$ or Theorem \ref{thm:Dirichlet-Poisson-2} $(n=2)$.
\end{defn}

To illustrate   $\mathscr{A}$, we introduce
\[     \tilde{\mathscr{A}} = \left \{ \left(\alpha,\frac{1}{p} \right) : (\alpha,p) \in \mathscr{A}  \right\}.\]
See Figure \ref{fig:admissible} for the set $\tilde{\mathscr{A}}$ in the $\alpha p^{-1}$-plane.  Observe that $\tilde{\mathscr{A}}$ is   symmetric with respect to $(1,1/2)$; hence 
\[ (\alpha,p)\in \mathscr{A}\quad \text{if and only if } \quad (2-\alpha,p')\in \mathscr{A}.\]
\begin{figure}
 \begin{subfigure}[b]{0.8\textwidth}
         \centering
         \includegraphics[width=\textwidth,page=2]{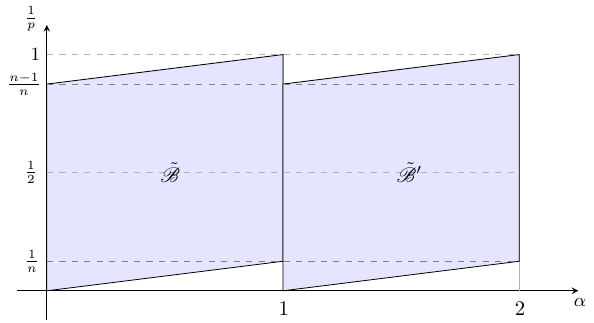} 
         \caption{The case $n\geq 3$}
         \label{fig:y equals x}
     \end{subfigure} 
     
      \begin{subfigure}[b]{0.8\textwidth}
         \centering
         \includegraphics[width=\textwidth,page=4]{Picture/diagram5} 
         \caption{The case $n=2$}
         \label{fig:y equals x}
     \end{subfigure} 

\caption{ \label{fig:admissible}}
\end{figure}
Next, let us consider the Neumann problem  for the  Poisson equation
\begin{equation}\label{eq:N-1-basic}
\left\{\begin{alignedat}{2}
-\triangle u& =f & \quad & \text{in }\Omega,\\
\nabla u \cdot \nu & =u_{\Neu} & \quad & \text{on }\partial\Omega.
\end{alignedat}\right.
\end{equation}

A standard  weak formulation of \eqref{eq:N-1-basic} is to find $u$ satisfying
\begin{equation*}
    \int_\Omega \nabla u \cdot \nabla \phi \myd{x} = \int_{\Omega} f \phi \myd{x} + \int_{\partial\Omega} u_{\Neu} \phi \myd{x}\quad \text{for all } \phi \in C^\infty(\overline{\Omega})
\end{equation*}
provided that the data $f$ and $u_{\Neu}$ are sufficiently regular. Note that for $1/p<\alpha<1+1/p$, the pairing between $\Bessel{\alpha-1}{p}(\Omega)$ and $\Bessel{1-\alpha}{p'}(\Omega)$ is well-defined by \eqref{eq:pairing-1} and \eqref{eq:pairing-2}.  So the pairing $\action{\nabla u,\nabla \phi}$ is well-defined for all $u\in \Bessel{\alpha}{p}(\Omega)$ and $\phi \in \Bessel{2-\alpha}{p'}(\Omega)$.

The following theorem is due to Fabes-Mendez-Mitrea \cite{MR1658089}  and Mitrea \cite{MR1883390}. 
\begin{thm}\label{thm:Neumann-Poisson}
Let $(\alpha,p)\in \mathscr{A}$. Then for every  $f\in \oBessel{\alpha-2}{p}(\Omega)$ and $u_{\Neu} \in \bes{\alpha-1-1/p}{p}(\partial\Omega)$ satisfying the compatibility condition $\action{f,1}+\action{u_{\Neu},1}=0$,
there exists a unique (up to additive constants)  function $u\in \Bessel{\alpha}{p}(\Omega)$ such that
\begin{equation}\label{eq:Neumann-integral-equation}
\action{\nabla u,\nabla \phi} = \action{f,\phi}+\action{u_{\Neu},\Tr \phi} \quad \text{for all } \phi \in \Bessel{2-\alpha}{p'}(\Omega).
\end{equation}
 Moreover, this function $u$ satisfies
\[
\norm u{\Bessel{\alpha}p(\Omega)}\le C\left(\norm f{\oBessel{\alpha-2}p(\Omega)}+\norm{u_{\Neu}}{\bes{\alpha-1-1/p}{p}(\partial\Omega)}\right)
\]
for some constant $C=C(n,\alpha,p,\Omega)$.
\end{thm}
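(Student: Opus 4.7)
The plan is to follow the boundary-layer-potential strategy, namely Verchota's $L^{2}$ theory extended to the Besov/Sobolev scale by Fabes, Mendez and Mitrea. Let $E$ denote the fundamental solution of $-\triangle$ in $\mathbb{R}^{n}$, and introduce the harmonic single-layer potential and the boundary operator
\[
\mathcal{S}g(x)=\int_{\partial\Omega}E(x-y)g(y)\,d\sigma(y)\ \ (x\in\Omega),\qquad K^{*}g(x)=\mathrm{p.v.}\int_{\partial\Omega}\partial_{\nu_{x}}E(x-y)g(y)\,d\sigma(y).
\]
Then $\mathcal{S}g$ is harmonic in $\Omega$ and, interpreting $\gamma_{\nu}$ as a generalized conormal trace (cf.\ Proposition~\ref{prop:normal-trace}), the classical jump formula reads $\gamma_{\nu}(\nabla\mathcal{S}g)=(-\tfrac12 I+K^{*})g$ on $\partial\Omega$.

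To handle the right-hand side $f$, I would split $u=w+v$. Since $\oBessel{\alpha-2}{p}(\Omega)\subset\Bessel{\alpha-2}{p}(\Omega)$, Theorem~\ref{thm:Dirichlet-Poisson} applied with zero Dirichlet data produces a unique $w\in\oBessel{\alpha}{p}(\Omega)$ with $-\triangle w=f$. Define the conormal trace $g_{0}\in\bes{\alpha-1-1/p}{p}(\partial\Omega)$ of $\nabla w$ by the weak identity
\[
\action{g_{0},\Tr\phi}=\action{\nabla w,\nabla\phi}-\action{f,\phi}\qquad\phi\in\Bessel{2-\alpha}{p'}(\Omega);
\]
this is well posed because the right-hand side vanishes on $\oBessel{2-\alpha}{p'}(\Omega)$ by the very definition of $w$, and a trivial computation gives $\action{g_{0},1}=-\action{f,1}$. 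The compatibility hypothesis then becomes $\action{h,1}=0$ for $h:=u_{\Neu}-g_{0}$, and seeking $v=\mathcal{S}g$ reduces \eqref{eq:Neumann-integral-equation} to the boundary integral equation
\[
\bigl(-\tfrac12 I+K^{*}\bigr)g=h \quad\text{on } \partial\Omega,
\]
posed in the mean-zero closed subspace of $\bes{\alpha-1-1/p}{p}(\partial\Omega)$.

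The principal obstacle is establishing the invertibility of $-\tfrac12 I+K^{*}$ modulo constants on $\bes{\alpha-1-1/p}{p}(\partial\Omega)$, together with the matching mapping property $\mathcal{S}:\bes{\alpha-1-1/p}{p}(\partial\Omega)\to\Bessel{\alpha}{p}(\Omega)$, for every $(\alpha,p)\in\mathscr{A}$. The case $(\alpha,p)=(1,2)$ is Verchota's theorem, proved via the Rellich identity together with the Coifman-McIntosh-Meyer bound for the Cauchy integral on Lipschitz graphs. A standard Calder\'on-Zygmund perturbation then transports this invertibility to $L^{p}(\partial\Omega)$ for $p$ in a neighbourhood of $2$; commuting with tangential derivatives upgrades it to the Sobolev/Besov scale near $(1,2)$; and atomic/molecular Hardy-type decompositions on $\partial\Omega$ handle the endpoint corners of the diagram. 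Complex interpolation fills in the open region $\mathscr{A}$, the symmetry about $(1,1/2)$ in Figure~\ref{fig:admissible} reflecting the duality $(-\tfrac12 I+K^{*})^{*}=-\tfrac12 I+K$; the resulting admissible range is governed by the same $\varepsilon(\Omega)$ as in Theorem~\ref{thm:Dirichlet-Poisson}, which is why the two hypotheses coincide. Once invertibility is granted, $g=(-\tfrac12 I+K^{*})^{-1}h$ is uniquely determined modulo constants, and $u=w+\mathcal{S}g$ is the required solution, unique modulo constants, with the stated norm bound obtained by composing the individual operator norms.
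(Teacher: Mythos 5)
The paper does not prove this statement at all: Theorem \ref{thm:Neumann-Poisson} is quoted verbatim from Fabes--Mendez--Mitrea \cite{MR1658089} and used as a black box, so there is no internal proof to compare against. What you have written is, in outline, precisely the layer-potential argument of that reference: reduce to homogeneous interior data by solving a Dirichlet problem for $w$ via Theorem \ref{thm:Dirichlet-Poisson}, define the weak conormal trace $g_{0}$ of $\nabla w$ (your well-posedness check --- that the functional vanishes on $\oBessel{2-\alpha}{p'}(\Omega)$ and that $\action{g_{0},1}=-\action{f,1}$ --- is correct), and then solve $(-\tfrac12 I+K^{*})g=h$ for a single-layer density on the mean-zero subspace of $\bes{\alpha-1-1/p}{p}(\partial\Omega)$. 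Your outline is consistent and the reduction is sound.

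Two caveats. First, be honest about where the weight sits: the invertibility of $-\tfrac12 I+K^{*}$ modulo constants on the negative-order Besov scale over the \emph{entire} region $\mathscr{A}$, together with the mapping property $\mathcal{S}:\bes{\alpha-1-1/p}{p}(\partial\Omega)\rightarrow\Bessel{\alpha}{p}(\Omega)$, is not a routine perturbation-plus-interpolation exercise --- it is the main theorem of \cite{MR1658089} (resting on Verchota's Rellich-identity argument, Coifman--McIntosh--Meyer, and atomic estimates at the corners of the diagram). As written, your ``proof'' is a correct reduction of the statement to the cited paper rather than an independent argument, which is fine, but the paragraph sketching that step should be flagged as citation rather than proof. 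Second, you assert uniqueness modulo constants without argument: your construction only produces a right inverse of the problem. You still must show that $u\in\Bessel{\alpha}{p}(\Omega)$ harmonic with $\action{\nabla u,\nabla\phi}=0$ for all $\phi\in\Bessel{2-\alpha}{p'}(\Omega)$ is constant; in the layer-potential framework this follows from a Green representation of $u$ by a double-layer potential plus the injectivity results for the boundary operators, and that step should be stated explicitly rather than folded into ``once invertibility is granted.''
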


However,  Theorem \ref{thm:Neumann-Poisson}  does not always gurantee  solvability for  the Neumann problem \eqref{eq:N-1-basic} as shown in the following example from  Amrouche and Rodr\'\i{g}uez-Bellido \cite{MR2763035}.

\begin{example}\label{example:Neumann-fails}
Let $(\alpha,p)\in \mathscr{A}$ be fixed.   By H\"older's inequality and Theorem \ref{thm:trace-theorem} (ii), we have
\begin{equation}\label{eq:counter-example}
\left|\int_{\partial \Omega} \Tr \phi \myd{\sigma} \right|\leq C \norm{\Tr \phi}{\bes{1+1/p-\alpha}{p'}(\partial\Omega)} \leq C \norm{\phi}{\Bessel{2-\alpha}{p'}(\Omega)}
\end{equation}
for all $\phi \in \Bessel{2-\alpha}{p'}(\Omega)$.
Define a linear functional $f$ by
\[     \action{f,\phi} = \int_{\partial\Omega} \Tr \phi \myd\sigma\quad \text{for all } \phi \in \Bessel{2-\alpha}{p'}(\Omega). \]
Then  by \eqref{eq:counter-example}, $f\in \oBessel{\alpha-2}{p}(\Omega)$.   Choose any $u_{\Neu} \in \bes{\alpha-1-1/p}{p}(\partial\Omega)$ satisfying
\[   \action{f,1}+\action{u_{\Neu},1}=0. \]
By Theorem \ref{thm:Neumann-Poisson}, there exists a function $u\in \Bessel{\alpha}{p}(\Omega)$ satisfying \eqref{eq:Neumann-integral-equation}. However, we  prove that $u$ satisfies
\begin{equation}\label{eq:Neumann-counterexample-2}
\left\{\begin{alignedat}{2}
-\triangle u & =0 & \quad & \text{in }\Omega,\\
\gamma_{\nu} (\nabla u) & =1+u_{\Neu} & \quad & \text{on }\partial\Omega,
\end{alignedat}\right.
\end{equation}
where $\gamma_{\nu}$ is the generalized normal trace operator introduced in Proposition \ref{prop:normal-trace}.

Since $\action{f,\phi}=\action{1,\Tr \phi}$ for all $\phi \in \Bessel{2-\alpha}{p'}(\Omega)$ and $u$ satisfies \eqref{eq:Neumann-integral-equation}, we have
\[ \action{\nabla u,\nabla \phi} =0\quad \text{for all } \phi \in C_0^\infty(\Omega),\]
which shows that $\triangle u =0$ in $\Omega$.
Choose any $(\beta,q)$ satisfying
\[  \alpha \leq \beta,\quad 0<\frac{1}{q}<\beta-1\leq 1,\quad \text{and }\quad   \frac{1}{p}-\frac{\alpha}{n}\geq \frac{1}{q}-\frac{\beta}{n}.  \]
   Since $\nabla u \in \Bessel{\alpha-1}{p}(\Omega)$ and $\Div(\nabla u)=\triangle u=0 \in \Bessel{\beta-2}{q}(\Omega)$, it follows from Proposition \ref{prop:normal-trace} that   $\gamma_{\nu}(\nabla u) \in \bes{\alpha-1-1/p}{p}(\partial\Omega)$  and
\[    \action{\gamma_{\nu}(\nabla u),\Tr \phi} = \action{\nabla u,\nabla \phi}=\action{1+u_{\Neu},\Tr \phi} \]
for all $\phi \in \Bessel{1-\alpha}{p'}(\Omega)$. Finally, since $\Tr: \Bessel{1-\alpha}{p'}(\Omega)\rightarrow \bes{1/p-\alpha}{p'}(\partial\Omega)$ is surjective,  we get
\[ \gamma_{\nu}(\nabla u) =1+u_{\Neu}, \]
which proves that $u$ is a solution of the problem \eqref{eq:Neumann-counterexample-2}.
\end{example}

Example \ref{example:Neumann-fails} suggests that we need to assume more regularity on the data $f$ to gurantee a unique solvability result for the Neumann problem \eqref{eq:N-1-basic}.
\begin{thm}\label{thm:real-Neumann-easy-one}
Let $(\alpha,p)\in \mathscr{A}$ and assume that $(\beta,q)$ satisfies
\[  \alpha\leq \beta,\quad   0<\frac{1}{q}<\beta-1 \leq 1,   \quad\text{and}\quad\frac{1}{p}-\frac{\alpha}{n} \geq \frac{1}{q}-\frac{\beta}{n}. \]
Then for every $f\in \Bessel{\beta-2}{q}(\Omega)$ and $u_{\Neu} \in \bes{\alpha-1-1/p}{p}(\partial\Omega)$ satisfying the compatibility condition $ \action{f,1}+\action{u_{\Neu},1}=0$, there exists a unique (up to additive constants)   function $u\in \Bessel{\alpha}{p}(\Omega)$ such that
\begin{equation}\label{eq:N-4-basic}
\left\{\begin{alignedat}{2}
-\triangle u & =f & \quad & \text{in }\Omega,\\
\gamma_{\nu} (\nabla u) & =u_{\Neu} & \quad & \text{on }\partial\Omega.
\end{alignedat}\right.
\end{equation}
\end{thm}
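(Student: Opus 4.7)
The plan is to deduce Theorem~\ref{thm:real-Neumann-easy-one} by combining Theorem~\ref{thm:Neumann-Poisson} with the generalized normal trace framework of Proposition~\ref{prop:normal-trace}. The first step would be to promote $f$ from $\Bessel{\beta-2}{q}(\Omega)$ to $\oBessel{\alpha-2}{p}(\Omega)$ by verifying the embedding $\Bessel{\beta-2}{q}(\Omega)\hookrightarrow \oBessel{\alpha-2}{p}(\Omega)$. Using the dualities $\Bessel{\beta-2}{q}(\Omega)=[\oBessel{2-\beta}{q'}(\Omega)]'$ and $\oBessel{\alpha-2}{p}(\Omega)=[\Bessel{2-\alpha}{p'}(\Omega)]'$, this reduces to $\Bessel{2-\alpha}{p'}(\Omega)\hookrightarrow \oBessel{2-\beta}{q'}(\Omega)$. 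Since $\beta-1>1/q$ and $\beta\le 2$ give $0\le 2-\beta<1/q'$, \eqref{eq:zero-equal} identifies $\oBessel{2-\beta}{q'}(\Omega)$ with $\Bessel{2-\beta}{q'}(\Omega)$, and Theorem~\ref{thm:Sobolev-embedding}(i) delivers the remaining embedding since $\alpha\le\beta$ gives $2-\beta\le 2-\alpha$ and the exponent condition $\frac{1}{p'}-\frac{2-\alpha}{n}\le \frac{1}{q'}-\frac{2-\beta}{n}$ is a rearrangement of the hypothesis $\frac{1}{p}-\frac{\alpha}{n}\ge \frac{1}{q}-\frac{\beta}{n}$.

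With $f$ viewed in $\oBessel{\alpha-2}{p}(\Omega)$, the compatibility condition $\action{f,1}+\action{u_{\Neu},1}=0$ is preserved (the pairing of $f$ against $1$ does not depend on whether we test against $1\in \Bessel{2-\alpha}{p'}(\Omega)$ or $1\in \Bessel{2-\beta}{q'}(\Omega)$), so Theorem~\ref{thm:Neumann-Poisson} yields a unique (up to constants) $u\in \Bessel{\alpha}{p}(\Omega)$ satisfying
\[
\action{\nabla u,\nabla \phi}=\action{f,\phi}+\action{u_{\Neu},\Tr\phi}\quad \text{for all } \phi\in \Bessel{2-\alpha}{p'}(\Omega).
\]
Testing against $\phi\in C_0^\infty(\Omega)$ immediately gives $-\triangle u=f$ distributionally in $\Omega$, hence as an identity in $\Bessel{\beta-2}{q}(\Omega)$. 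Next I would invoke Proposition~\ref{prop:normal-trace} on $F=\nabla u\in \Bessel{\alpha-1}{p}(\Omega)^n$, taking the roles of $(\alpha,p)$ and $(\beta,q)$ there to be $(\alpha-1,p)$ and $(\beta-1,q)$. The range requirement $-1/p'<\alpha-1<1/p$ becomes $1/p<\alpha<1+1/p$, which is part of the definition of $\mathscr{A}$, while the remaining conditions $\alpha\le\beta$, $0<1/q<\beta-1\le 1$, and $\frac{1}{p}-\frac{\alpha}{n}\ge \frac{1}{q}-\frac{\beta}{n}$ are precisely the standing hypotheses. This produces $\gamma_{\nu}(\nabla u)\in \bes{\alpha-1-1/p}{p}(\partial\Omega)$, and combining \eqref{eq:normal-trace-identity} with the integral equation gives
\[
\action{\gamma_{\nu}(\nabla u),\Tr\phi}=\action{\nabla u,\nabla\phi}-\action{f,\phi}=\action{u_{\Neu},\Tr\phi}
\]
for every $\phi\in \Bessel{2-\alpha}{p'}(\Omega)$. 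Surjectivity of $\Tr:\Bessel{2-\alpha}{p'}(\Omega)\to \bes{1+1/p-\alpha}{p'}(\partial\Omega)$ from Theorem~\ref{thm:trace-theorem}(iii) then forces $\gamma_{\nu}(\nabla u)=u_{\Neu}$ in $\bes{\alpha-1-1/p}{p}(\partial\Omega)$.

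Uniqueness follows by reversing this reasoning: if $v\in \Bessel{\alpha}{p}(\Omega)$ satisfies the homogeneous version of \eqref{eq:N-4-basic}, then Proposition~\ref{prop:normal-trace} applied to $\nabla v$ with $\Div(\nabla v)=0$ collapses \eqref{eq:normal-trace-identity} to $\action{\nabla v,\nabla\phi}=\action{\gamma_{\nu}(\nabla v),\Tr\phi}=0$ for every $\phi\in \Bessel{2-\alpha}{p'}(\Omega)$, and the uniqueness part of Theorem~\ref{thm:Neumann-Poisson} forces $v$ to be constant.

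I expect no serious analytic obstacle; the proof is essentially exponent bookkeeping. The only point that deserves emphasis is that the strengthened regularity hypothesis $1/q<\beta-1$ is exactly what places $\Div(\nabla u)=-f$ inside the admissible range of Proposition~\ref{prop:normal-trace}, so that the generalized normal trace $\gamma_{\nu}(\nabla u)$ is well-defined and can be identified with the prescribed datum $u_{\Neu}$.
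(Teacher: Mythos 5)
Your proposal is correct and follows essentially the same route as the paper's proof: reduce $f$ to an element of $\oBessel{\alpha-2}{p}(\Omega)$ via the embedding $\Bessel{2-\alpha}{p'}(\Omega)\hookrightarrow\oBessel{2-\beta}{q'}(\Omega)$, invoke Theorem \ref{thm:Neumann-Poisson}, identify $-\triangle u=f$ in $\Bessel{\beta-2}{q}(\Omega)$, apply Proposition \ref{prop:normal-trace} to $\nabla u$ (with the shifted indices $(\alpha-1,p)$, $(\beta-1,q)$) together with surjectivity of the trace, and run the same computation backwards for uniqueness. The only cosmetic difference is that the paper passes through density of $\Bessel{2-\alpha}{p'}(\Omega)$ in $\Bessel{2-\beta}{q'}(\Omega)$ where you argue by duality directly, which amounts to the same thing.
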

\begin{proof}
Since $\Bessel{2-\alpha}{p'}(\Omega)\hookrightarrow \Bessel{2-\beta}{q'}(\Omega)=\oBessel{2-\beta}{q'}(\Omega)$ and $\Bessel{2-\alpha}{p'}(\Omega)$ is dense in $\Bessel{2-\beta}{q'}(\Omega)$, it follows that $\Bessel{\beta-2}{q}(\Omega)\hookrightarrow \oBessel{\alpha-2}{p}(\Omega)$. Hence  by Theorem \ref{thm:Neumann-Poisson}, there exists a  function $u\in \Bessel{\alpha}{p}(\Omega)$ such that
\begin{equation*}
\action{\nabla u,\nabla \phi} =\action{f,\phi} +\action{u_{\Neu},\Tr \phi}\quad \text{for all } \phi \in \Bessel{2-\alpha}{p'}(\Omega).
\end{equation*}
Since $f\in \Bessel{\beta-2}{q}(\Omega)$, we have
\[    \action{\nabla u,\nabla \phi} =\action{f,\phi} \quad \text{and }\quad    \left|\action{\nabla u,\nabla \phi}\right|\leq C \norm{f}{\Bessel{\beta-2}{q}(\Omega)} \norm{\phi}{\Bessel{2-\beta}{q'}(\Omega)} \]
for all $\phi \in C_0^\infty(\Omega)$. So $\Div(\nabla u) = \triangle u \in \Bessel{\beta-2}{q}(\Omega)$ and $-\triangle u =f$ in $\Omega$. Moreover it follows from Proposition \ref{prop:normal-trace} that  $\gamma_{\nu} (\nabla u) \in \bes{\alpha-1-1/p}{p}(\partial\Omega)$ and
\[   \action{\gamma_{\nu}(\nabla u),\Tr \phi} = -\action{f,\phi} + \action{\nabla u,\nabla \phi}=\action{u_{\Neu},\Tr \phi}   \]
for all $\phi \in \Bessel{2-\alpha}{p'}(\Omega)$.
Since $\Tr : \Bessel{2-\alpha}{p'}(\Omega)\rightarrow \bes{1+1/p-\alpha}{p'}(\partial\Omega)$ is surjective,  we conclude that  $\gamma_{\nu} (\nabla u) =u_{\Neu}$.

To prove the uniqueness part, let $u$ be a  solution to the problem \eqref{eq:N-4-basic} with $f=0$ and $u_{\Neu}=0$.  Since $\gamma_{\nu}(\nabla u)=0$  and $-\triangle u=0$ in $\Omega$, it follows that
\[    0=\action{u_{\Neu},\Tr \phi} = \action{\nabla u,\nabla \phi} + \action{\triangle u,\phi} =   \action{\nabla u,\nabla \phi} \]
for all $\phi \in \Bessel{2-\beta}{q'}(\Omega)$. Hence by Theorem  \ref{thm:Neumann-Poisson}, $u=c$ for some constant $c$. This completes the proof of Theorem \ref{thm:real-Neumann-easy-one}.
\end{proof}

It can be easily shown that for each $(\alpha,p)\in \mathscr{A}$ there always exist    pairs $(\beta,q)$ satisfying the condition of Theorem \ref{thm:real-Neumann-easy-one}, for instance,  by using a geometric interpretation  of the exponents in the $\alpha p^{-1}$-plane (see  Figure 2.1).

\subsection{Bilinear estimates}\label{subsec:bilinear}
In this subsection, we derive some bilinear estimates which will play a crucial role in this paper.
\begin{lem}
\label{lem:basic-estimates} Suppose that $\boldb\in\Leb n(\Omega)^{n}$, and let $(\alpha,p)$ and $\left(\beta,q\right)$ satisfy
\begin{equation}
0\le\alpha\le\beta\le2,\quad1<q\le p<\infty,\quad\text{and }\quad \frac{1}{q}-\frac{\beta}{n}=\frac{1}{p}-\frac{\alpha}{n}.\label{eq:bilinear-estimates-Sobolev}
\end{equation}
\begin{enumerate}[label={\textnormal{(\roman*)}},ref={\roman*},leftmargin=2em]
\item \label{enu:basic-estimates-i} Assume that
\begin{equation}\label{eq:B-set}
\beta\le1\quad \text{and}\quad \frac{\alpha}{n}<\frac{1}{p}<\frac{\alpha+n-1}{n}.
\end{equation}
Then for any $u\in\Bessel{\alpha}p(\Omega)$, we have
\[  u\boldb\in \Leb{1}(\Omega)^n\]
and
\begin{equation}\label{eq:Gerhardt-constant}
\int_\Omega (u\boldb ) \cdot \Phi \myd{x} \leq C\norm{\boldb}{\Leb{n}(\Omega)} \norm{u}{\Bessel{\alpha}{p}(\Omega)} \norm{\Phi}{\Bessel{1-\beta}{q'}(\Omega)}
\end{equation}
for all $ \Phi \in C^\infty(\overline{\Omega})^n$, where  $C=C\left(n,\alpha,\beta,p,q,\Omega\right)>0$.
\item \label{enu:basic-estimates-ii} Assume that
\begin{equation}\label{eq:B-prime-set}
\alpha\geq1\quad \text{and}\quad \frac{\alpha-1}{n}<\frac{1}{p}<\frac{\alpha+n-2}{n}.
\end{equation}
Then for any $v\in\Bessel{\alpha}p(\Omega)$, we have
\[ \boldb\cdot \nabla v\in \Leb{1}(\Omega)\]
and
\begin{equation*}
\int_\Omega (\boldb\cdot \nabla v)   \psi \myd{x} \leq C\norm{\boldb}{\Leb{n}(\Omega)} \norm{v}{\Bessel{\alpha}{p}(\Omega)} \norm{\psi}{\Bessel{2-\beta}{q'}(\Omega)}
\end{equation*}
for all $\psi \in C^\infty(\overline{\Omega})$, where  $C=C\left(n,\alpha,\beta,p,q,\Omega\right)>0$.
\end{enumerate}
\end{lem}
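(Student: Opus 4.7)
The plan is to reduce both estimates to a triple Hölder inequality in Lebesgue spaces, after invoking the Sobolev embedding Theorem \ref{thm:Sobolev-embedding} on the two Bessel-potential factors.

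For part (\ref{enu:basic-estimates-i}), I would introduce exponents $r_1, r_2$ by
\[
\frac{1}{r_1} = \frac{1}{p} - \frac{\alpha}{n}, \qquad \frac{1}{r_2} = \frac{1}{q'} - \frac{1-\beta}{n}.
\]
The left strict inequality $\alpha/n < 1/p$ in \eqref{eq:B-set} puts $r_1 \in (1,\infty)$ and yields $\Bessel{\alpha}{p}(\Omega) \hookrightarrow \Leb{r_1}(\Omega)$. For $r_2$, the scaling relation in \eqref{eq:bilinear-estimates-Sobolev} rewrites as $1/q' = 1 - 1/p - (\beta-\alpha)/n$, so $1/r_2 > 0$ is equivalent to $1/p < (\alpha + n - 1)/n$, which is precisely the right strict inequality in \eqref{eq:B-set}; combined with $1 - \beta \geq 0$, this gives $\Bessel{1-\beta}{q'}(\Omega) \hookrightarrow \Leb{r_2}(\Omega)$ via Theorem \ref{thm:Sobolev-embedding}(i). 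A further line of algebra using the same scaling relation produces the exact balance $1/r_1 + 1/n + 1/r_2 = 1$, so triple Hölder gives
\[
\Bigl| \int_\Omega (u\boldb) \cdot \Phi \, dx \Bigr| \leq \norm{u}{\Leb{r_1}(\Omega)} \norm{\boldb}{\Leb{n}(\Omega)^n} \norm{\Phi}{\Leb{r_2}(\Omega)^n} \leq C \norm{\boldb}{\Leb{n}(\Omega)^n} \norm{u}{\Bessel{\alpha}{p}(\Omega)} \norm{\Phi}{\Bessel{1-\beta}{q'}(\Omega)^n}.
\]
The integrability $u\boldb \in \Leb{1}(\Omega)^n$ is obtained separately from the two-factor Hölder inequality: $1/r_1 + 1/n < 1$ (again by the right endpoint in \eqref{eq:B-set}) places $u\boldb$ in $\Leb{s}(\Omega)^n$ for some $s > 1$, hence in $\Leb{1}(\Omega)^n$ by boundedness of $\Omega$.

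For part (\ref{enu:basic-estimates-ii}), the idea is to apply \eqref{eq:gradient-bounded} to obtain $\nabla v \in \Bessel{\alpha-1}{p}(\Omega)^n$ with $\norm{\nabla v}{\Bessel{\alpha-1}{p}(\Omega)^n} \leq C \norm{v}{\Bessel{\alpha}{p}(\Omega)}$, and then invoke part (\ref{enu:basic-estimates-i}) with shifted indices. Setting $\tilde\alpha = \alpha - 1$ and $\tilde\beta = \beta - 1$, the hypothesis \eqref{eq:B-prime-set} together with the scaling relation becomes exactly the hypothesis \eqref{eq:B-set} (together with the scaling relation) for the pair $(\tilde\alpha, \tilde\beta)$. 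Applying part (\ref{enu:basic-estimates-i}) componentwise with $u$ replaced by $\partial_i v$ and $\Phi$ taken to be the vector field with $\psi$ in the $i$-th slot and zero elsewhere, then summing over $i = 1, \dots, n$, yields both $\boldb \cdot \nabla v \in \Leb{1}(\Omega)$ and the stated estimate, noting the identification of norms $\norm{\psi}{\Bessel{2-\beta}{q'}(\Omega)}$ on the right.

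The only delicate aspect is the bookkeeping that matches each strict inequality in \eqref{eq:B-set} to the correct endpoint condition for one of the two Sobolev embeddings, and analogously for \eqref{eq:B-prime-set}; this is not an obstacle so much as a computation one must perform carefully. Once the endpoint conditions and the identity $1/r_1 + 1/n + 1/r_2 = 1$ are verified, the rest is a direct application of Hölder's inequality together with \eqref{eq:gradient-bounded} in part (\ref{enu:basic-estimates-ii}).
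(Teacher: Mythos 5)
Your proposal is correct and follows essentially the same route as the paper: the same exponents $r_1=r$, $r_2=s$ with $1/r+1/s+1/n=1$, the same Sobolev embeddings, triple H\"older, and the same reduction of part (ii) to part (i) via the boundedness of $\nabla:\Bessel{\alpha}{p}(\Omega)\rightarrow\Bessel{\alpha-1}{p}(\Omega)^n$. The only cosmetic difference is in the $\Leb{1}$-integrability step, where the paper embeds $u$ into $\Leb{\frac{n}{n-1}}(\Omega)$ and applies two-factor H\"older directly, while you observe $1/r_1+1/n<1$; these are equivalent.
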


\begin{proof}
Assume that $(\alpha,p)$ satisfies \eqref{eq:B-set} and $u\in \Bessel{\alpha}{p}(\Omega)$. Then it follows from  Theorem \ref{thm:Sobolev-embedding} that $u\in \Leb{\frac{n}{n-1}}(\Omega)$.   Hence by H\"older's inequality, we have $ u\boldb \in \Leb{1}(\Omega)^n$.

Define $r$ and $s$ by
\[
\frac{1}{r}=\frac{1}{p}-\frac{\alpha}{n}\quad\text{and}\quad\frac{1}{s}=\frac{1}{q'}-\frac{1-\beta}{n}.
\]
Then
\begin{equation}\label{eq:basic-estimates-condition}
1<r,s<\infty\quad  \text{and}\quad  \frac{1}{r}+\frac{1}{s}+\frac{1}{n}=1.
\end{equation}
Let $\Phi\in C^{\infty}\left(\overline{\Omega}\right)^{n}$ be given. Then H\"older's
inequality and Theorem \ref{thm:Sobolev-embedding}   give
\begin{align*}
\int_{\Omega}\left(u\boldb\right)\cdot\Phi \myd{x} & \le\norm{\boldb}{\Leb n(\Omega)}\norm u{\Leb r(\Omega)}\norm{\Phi}{\Leb s(\Omega)}\\
 & \le C\norm{\boldb}{\Leb n(\Omega)}\norm u{\Bessel{\alpha}p(\Omega)}\norm{\Phi}{\Bessel{1-\beta}{q'}(\Omega)}
\end{align*}
for some constant $C=C\left(n,\alpha,\beta,p,q,\Omega\right)$. This
completes the proof of \eqref{enu:basic-estimates-i}. The assertion \eqref{enu:basic-estimates-ii}  immediately follows from (i) since $v\in\Bessel{\alpha}{p}(\Omega)$ implies $\nabla v \in \Bessel{\alpha-1}{p}(\Omega)^n$. This completes the proof of Lemma \ref{lem:basic-estimates}.
\end{proof}
\begin{rem*}
Suppose that  $\alpha \ge 0$ and $1<p<\infty$. If $w\in \Leb{1}(\Omega)$ satisfies
\[   \int_\Omega  w \phi \myd{x} \leq C \norm{\phi}{\Bessel{\alpha}{p}(\Omega)}\quad \text{for all } \phi \in C^\infty(\overline{\Omega}), \]
then the functional
\[   \phi \mapsto \int_\Omega w \phi \myd{x} \]
can be uniquely extended to a bounded linear functional on both $\Bessel{\alpha}{p}(\Omega)$ and $\oBessel{\alpha}{p}(\Omega)$, which we still denote by $w$.
\end{rem*}

Lemma \ref{lem:basic-estimates} and the remark  enable us to prove the following estimates which are inspired by Gerhardt's inequality in \cite{MR520820} (see also \cite{MR2506072,MR3328143,MR3623550,MR2846167}).
\begin{lem}
\label{lem:Gerhardt} Suppose that $\boldb\in\Leb n(\Omega)^n$.
\begin{enumerate}[label={\textnormal{(\roman*)}},ref={\roman*},leftmargin=2em]
\item \label{enu:Gerhardt-1}  Let $(\alpha,p)$ satisfy
\[   0\leq \alpha\leq1 \quad  \text{and }\quad  \frac{\alpha}{n}< \frac{1}{p}< \frac{\alpha+n-1}{n}.\]
Then for each $\varepsilon>0$, there is a constant $C_{\varepsilon}=C\left(\varepsilon,n,\alpha,p,\boldb,\Omega\right)>0$
such that
\[
\norm{u\boldb}{\oBessel{\alpha-1}p(\Omega)} + \norm{u\boldb}{\Bessel{\alpha-1}{p}(\Omega)} \le\varepsilon\norm u{\Bessel{\alpha}p(\Omega)}+C_{\varepsilon}\norm u{\Leb p(\Omega)}
\]
for all $u\in\Bessel{\alpha}p(\Omega)$.
\item \label{enu:Gerhardt-2} Let  $(\alpha,p)$ satisfy
\[ 1\le\alpha \leq 2\quad \text{and } \quad \frac{\alpha-1}{n}<\frac{1}{p}<\frac{\alpha+n-2}{n}.\]
Then for each $\varepsilon>0$, there is a constant $C_{\varepsilon}=C\left(\varepsilon,n,\alpha,p,\boldb,\Omega\right)>0$
such that
\[
\norm{\boldb\cdot\nabla v}{\oBessel{\alpha-2}p(\Omega)} + \norm{\boldb \cdot \nabla v}{\Bessel{\alpha-2}{p}(\Omega)} \le\varepsilon\norm v{\Bessel{\alpha}p(\Omega)}+C_{\varepsilon}\norm v{\Leb p(\Omega)}
\]
for all $v\in\Bessel{\alpha}p(\Omega)$.
\end{enumerate}
\end{lem}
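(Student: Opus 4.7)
My plan for both parts is to decompose $\boldb = \boldb_1 + \boldb_2$ with $\boldb_1 \in C_c^\infty(\Omega)^n$ and $\norm{\boldb_2}{\Leb{n}(\Omega)}$ as small as desired, which is possible by density of $C_c^\infty(\Omega)^n$ in $\Leb{n}(\Omega)^n$. The small part is controlled by applying Lemma \ref{lem:basic-estimates} with $(\beta,q) = (\alpha,p)$, producing the $\varepsilon$-multiple of the Sobolev norm; the smooth part must be handled by a separate argument yielding an $\Leb{p}$-bound, perhaps with an absorbable tail.

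For part (i), Lemma \ref{lem:basic-estimates}(i) with $(\beta,q)=(\alpha,p)$ yields
\[
\int_\Omega (u\boldb_2)\cdot\Phi \myd{x}\leq C\norm{\boldb_2}{\Leb{n}(\Omega)}\norm{u}{\Bessel{\alpha}{p}(\Omega)}\norm{\Phi}{\Bessel{1-\alpha}{p'}(\Omega)}
\]
for all $\Phi\in C^\infty(\overline{\Omega})^n$. Since $C^\infty(\overline{\Omega})^n$ is dense in $\Bessel{1-\alpha}{p'}(\Omega)^n$ and $\oBessel{\alpha-1}{p}(\Omega) = [\Bessel{1-\alpha}{p'}(\Omega)]'$ whenever $1-\alpha\geq 0$, the above identifies $u\boldb_2$ with an element of $\oBessel{\alpha-1}{p}(\Omega)^n$ of norm at most $C\norm{\boldb_2}{\Leb{n}}\norm{u}{\Bessel{\alpha}{p}}$. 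For the smooth part, $\norm{u\boldb_1}{\Leb{p}(\Omega)}\leq\norm{\boldb_1}{\Leb{\infty}(\Omega)}\norm{u}{\Leb{p}(\Omega)}$, and dualizing the embedding $\Bessel{1-\alpha}{p'}(\Omega)\hookrightarrow\Leb{p'}(\Omega)$ (Theorem \ref{thm:Sobolev-embedding}, using $1-\alpha\geq 0$) gives $\Leb{p}(\Omega)\hookrightarrow\oBessel{\alpha-1}{p}(\Omega)$. Choosing $\norm{\boldb_2}{\Leb{n}}$ small enough and using $\oBessel{\alpha-1}{p}(\Omega)\hookrightarrow\Bessel{\alpha-1}{p}(\Omega)$ yields both norms at once.

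For part (ii), the small part $\boldb_2\cdot\nabla v$ is handled identically by Lemma \ref{lem:basic-estimates}(ii). The main obstacle is to control $\boldb_1\cdot\nabla v$ by $\norm{v}{\Leb{p}}$ up to an absorbable remainder, since the naive bound $\norm{\boldb_1\cdot\nabla v}{\Leb{p}}\leq C(\boldb_1)\norm{v}{\Bessel{1}{p}}$ is too coarse when $\alpha>1$. My approach is to use the distributional identity
\[
\boldb_1\cdot\nabla v = \Div(v\boldb_1) - v\Div\boldb_1 \quad \text{in } \oBessel{\alpha-2}{p}(\Omega).
\]
Pointwise multiplication by $\boldb_1\in C_c^\infty(\Omega)^n$ sends $\Bessel{\alpha-1}{p}(\Omega)$ into $\oBessel{\alpha-1}{p}(\Omega)^n$ (the compact support preserves the support condition), so $\norm{v\boldb_1}{\oBessel{\alpha-1}{p}}\leq C(\boldb_1)\norm{v}{\Bessel{\alpha-1}{p}}$, and the bounded divergence $\Div:\oBessel{\alpha-1}{p}(\Omega)^n\to\oBessel{\alpha-2}{p}(\Omega)$ then gives $\norm{\Div(v\boldb_1)}{\oBessel{\alpha-2}{p}}\leq C(\boldb_1)\norm{v}{\Bessel{\alpha-1}{p}}$. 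The second term satisfies $\norm{v\Div\boldb_1}{\Leb{p}}\leq C(\boldb_1)\norm{v}{\Leb{p}}$ and lies in $\oBessel{\alpha-2}{p}(\Omega)$ via $\Leb{p}(\Omega)\hookrightarrow\oBessel{\alpha-2}{p}(\Omega)$ (as $2-\alpha\geq 0$). When $\alpha=1$, $\norm{v}{\Bessel{\alpha-1}{p}}=\norm{v}{\Leb{p}}$ and we are done. When $\alpha>1$, the Ehrling-type inequality
\[
\norm{v}{\Bessel{\alpha-1}{p}(\Omega)} \leq \delta\norm{v}{\Bessel{\alpha}{p}(\Omega)} + C_\delta\norm{v}{\Leb{p}(\Omega)},
\]
a standard consequence of the compact embedding $\Bessel{\alpha}{p}(\Omega)\hookrightarrow\Bessel{\alpha-1}{p}(\Omega)$ from Theorem \ref{thm:Sobolev-embedding}(ii), allows one to choose $\delta$ so that $C(\boldb_1)\delta\leq\varepsilon/2$. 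Combining all pieces and using $\oBessel{\alpha-2}{p}(\Omega)\hookrightarrow\Bessel{\alpha-2}{p}(\Omega)$ concludes the proof.
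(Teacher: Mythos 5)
Your proof of part (i) is essentially identical to the paper's: the same decomposition $\boldb=\boldb_1+\boldb_2$ with $\boldb_1\in C_0^\infty(\Omega)^n$ and $\norm{\boldb_2}{\Leb{n}(\Omega)}$ small, Lemma \ref{lem:basic-estimates}(i) applied with $(\beta,q)=(\alpha,p)$ to the small part, and H\"older plus the embedding $\Leb{p}(\Omega)\hookrightarrow\oBessel{\alpha-1}{p}(\Omega)$ for the smooth part.

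Part (ii) is where your write-up genuinely differs, because the paper only says the argument is ``similar and so omitted.'' You correctly recognize that a literal transcription of part (i) yields $\norm{\boldb_1\cdot\nabla v}{\Leb{p}(\Omega)}\le C(\boldb_1)\norm{\nabla v}{\Leb{p}(\Omega)}$, which is not of the required form $C_\varepsilon\norm{v}{\Leb{p}(\Omega)}$, and you repair this with the distributional product rule $\boldb_1\cdot\nabla v=\Div(v\boldb_1)-v\Div\boldb_1$ (legitimate with no boundary contribution since $\boldb_1$ is compactly supported) combined with the Ehrling inequality obtained from the compact embedding $\Bessel{\alpha}{p}(\Omega)\hookrightarrow\Bessel{\alpha-1}{p}(\Omega)$ of Theorem \ref{thm:Sobolev-embedding}(ii). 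This is correct and covers the full range $1\le\alpha\le 2$; it is in fact a more complete account than the paper provides. One small remark: your diagnosis that the naive $\Leb{p}$ bound ``is too coarse when $\alpha>1$'' is inverted. For $\alpha>1$ the naive bound together with Ehrling already suffices, since $\norm{v}{\Bessel{1}{p}(\Omega)}$ is then an intermediate norm between $\norm{v}{\Leb{p}(\Omega)}$ and $\norm{v}{\Bessel{\alpha}{p}(\Omega)}$; it is precisely at $\alpha=1$ that Ehrling gives nothing and the integration by parts is indispensable, while for $\alpha>1$ the integration by parts alone is insufficient (the test function then carries only $2-\alpha<1$ derivatives) and it is Ehrling that saves the day. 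Your actual argument handles both regimes uniformly, so this only affects the motivating sentence, not the proof.
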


\begin{proof}
(\ref{enu:Gerhardt-1}) Let $\varepsilon>0$ be given. Since $C_{0}^{\infty}(\Omega)^{n}$
is dense in $\Leb n(\Omega)^{n}$, there exists $\boldb_{\varepsilon}\in C_{0}^{\infty}(\Omega)^n$
such that $\norm{\boldb_{\varepsilon}-\boldb}{\Leb n(\Omega)}<\varepsilon/2C$,
where $C$ is the positive constant in \eqref{eq:Gerhardt-constant} with $\beta =\alpha$ and $q =p$.
Then by  Lemma \ref{lem:basic-estimates} \eqref{enu:basic-estimates-i} and its remark,
we have
\begin{align*}
\norm{u\boldb}{\oBessel{\alpha-1}p(\Omega)} & \le\norm{u\left(\boldb-\boldb_{\varepsilon}\right)}{\oBessel{\alpha-1}p(\Omega)}+\norm{u\boldb_{\varepsilon}}{\oBessel{\alpha-1}p(\Omega)}\\
 & \le \frac{\varepsilon}{2}\norm u{\Bessel{\alpha}p(\Omega)}+
 \norm{u\boldb_{\varepsilon}}{\oBessel{\alpha-1}p(\Omega)}
\end{align*}
and similarly
\[
\norm{u\boldb}{\Bessel{\alpha-1}p(\Omega)}  \le\frac{\varepsilon}{2}\norm u{\Bessel{\alpha}p(\Omega)}+
 \norm{u\boldb_{\varepsilon}}{\Bessel{\alpha-1}p(\Omega)} .
\]
Let $\Phi\in C^{\infty}\left(\overline{\Omega}\right)^{n}$. Then by H\"older's
inequality, we get
\begin{align*}
\int_\Omega  (u\boldb_{\varepsilon}) \cdot \Phi \myd{x}  & \le\norm{\boldb_{\varepsilon}}{\Leb{\infty}(\Omega)}\norm u{\Leb p(\Omega)}\norm{\Phi}{\Leb{p'}(\Omega)} \\
&\leq   \norm{\boldb_{\varepsilon}}{\Leb{\infty}(\Omega)} \norm{u}{\Leb{p}(\Omega)} \norm{\Phi}{\Bessel{1-\alpha}{p'}(\Omega)}
\end{align*}
and thus
\[
\norm{u\boldb_{\varepsilon}}{\oBessel{\alpha-1}p(\Omega)}\le C_{\varepsilon}\norm u{\Leb p(\Omega)}
\]
for some constant $C_{\varepsilon}=C\left(\varepsilon,n,\alpha,p,\boldb,\Omega\right)$.
The proof  of \eqref{enu:Gerhardt-2} is similar and so omitted. This completes the proof of Lemma \ref{lem:Gerhardt}.
\end{proof}
\begin{figure}

 \begin{subfigure}[b]{0.8\textwidth}
         \centering
         \includegraphics[width=\textwidth,page=1]{Picture/diagram5} 
         \caption{The case  $n\geq 3$} 
     \end{subfigure}

 \begin{subfigure}[b]{0.8\textwidth}
         \centering
         \includegraphics[width=\textwidth,page=5]{Picture/diagram5} 
         \caption{The case $n=2$} 
     \end{subfigure}  
\caption{\label{fig:bilinear-estimates}}

\end{figure}
\begin{defn}\label{defn:admissible-B}
We denote by $\mathscr{B}$ the set of all pairs $(\alpha,p)$ that
satisfy the condition in Lemma \ref{lem:Gerhardt}
\eqref{enu:Gerhardt-1}. Similarly, $\mathscr{B}'$ is the set of
all pairs $(\alpha,p)$ that satisfy the condition in Lemma
\ref{lem:Gerhardt} \eqref{enu:Gerhardt-2}.
\end{defn}
To depict  these sets, we introduce
\[  \tilde{\mathscr{B}} = \left\{  \left(\alpha,\frac{1}{p} \right) : (\alpha,p) \in \mathscr{B} \right\}\quad \text{and}\quad \tilde{\mathscr{B}'} = \left\{   \left(\alpha,\frac{1}{p} \right) : (\alpha,p) \in \mathscr{B}' \right\}. \]
See Figure \ref{fig:bilinear-estimates} for the  sets $\tilde{\mathscr{B}}$ and $\tilde{\mathscr{B}}'$ in the $\alpha p^{-1}$-plane.   Note that  $\tilde{\mathscr{B}}'$ is the reflection of $\tilde{\mathscr{B}}$ with respect to $(1,1/2)$; hence
\[  (\alpha,p)\in \mathscr{B}\quad \text{if and only if}\quad  (2-\alpha,p')\in \mathscr{B}'. \]

Assume that $\boldb \in \Leb{n}(\Omega)^n$, and  let  $(\alpha,p) \in \mathscr{B}$ be fixed. Then by Lemma \ref{lem:basic-estimates} (i), the mapping
\[  (u,\phi)\mapsto   \action{\mathcal{B}_{\alpha,p}^{N}u,\phi} = -\action{u\boldb,\nabla \phi}\quad \text{for all } (u,\phi) \in \Bessel{\alpha}{p}(\Omega)\times\Bessel{2-\alpha}{p'}(\Omega)\]
defines a bounded linear operator $\mathcal{B}_{\alpha,p}^N$ from $\Bessel{\alpha}{p}(\Omega)$ to $\oBessel{\alpha-2}{p}(\Omega)$. The same lemma also shows that  the mapping $\mathcal{B}^{D}_{\alpha,p}$  defined by
\[   \mathcal{B}^{D}_{\alpha,p} u = \Div(u\boldb)\quad \text{for all } u\in \oBessel{\alpha}{p}(\Omega)  \]
is a bounded linear operator from $\oBessel{\alpha}{p}(\Omega)$ to $\Bessel{\alpha-2}{p}(\Omega)$. Similarly, it follows from Lemma \ref{lem:basic-estimates} (ii) that the mapping  $\mathcal{B}_{2-\alpha,p'}^{*}$  defined by
\[   \mathcal{B}_{2-\alpha,p'}^{*}v= -\boldb\cdot \nabla v\quad  \text{for all } v \in \Bessel{2-\alpha}{p'}(\Omega) \]
is  a bounded linear operator   from $\Bessel{2-\alpha}{p'}(\Omega)$ to $\oBessel{-\alpha}{p'}(\Omega)$. Note also that
\begin{equation}\label{eq:duality-B}
\action{\mathcal{B}_{\alpha,p}^{N}u,v} = \action{\mathcal{B}_{2-\alpha,p'}^*v,u}\quad \text{for all } (u,v) \in \Bessel{\alpha}{p}(\Omega)\times \Bessel{2-\alpha}{p'}(\Omega)
\end{equation}
 and
\begin{equation}\label{eq:duality-B-2}
\action{\mathcal{B}_{\alpha,p}^{D}u,v} = \action{\mathcal{B}_{2-\alpha,p'}^*v,u}\quad \text{for all } (u,v) \in \oBessel{\alpha}{p}(\Omega)\times \oBessel{2-\alpha}{p'}(\Omega).
\end{equation}
 Moreover, these operators are compact as shown below.

\begin{lem}\label{lem:compactness-of-operators} Suppose that $\boldb \in \Leb{n}(\Omega)^n$, and let $(\alpha,p)\in \mathscr{B}$.   Then the operators $\mathcal{B}_{\alpha,p}^{N}$, $\mathcal{B}_{\alpha,p}^{D}$, and $\mathcal{B}_{2-\alpha,p'}^{*}$ are compact.
\end{lem}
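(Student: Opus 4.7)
The plan is to deduce compactness of all three operators from a single standard criterion: if $T\colon X\to Y$ is a bounded linear operator between Banach spaces, if $X$ is compactly embedded in some Banach space $Z$, and if for every $\varepsilon>0$ one has $\|Tu\|_{Y}\le \varepsilon\|u\|_{X}+C_{\varepsilon}\|u\|_{Z}$ for all $u\in X$, then $T$ is compact. The proof is the familiar $\varepsilon$-trick: a bounded sequence in $X$ has a subsequence converging in $Z$, and applying the inequality to differences while letting $\varepsilon\to 0$ forces Cauchyness in $Y$.

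I would apply this criterion with $Z=L^{p}(\Omega)$ (respectively $L^{p'}(\Omega)$) and $X$ the domain of the operator at hand. The required inequality comes by chaining Lemma~\ref{lem:Gerhardt} with the boundedness of the divergence or gradient on Bessel scales. For $\mathcal{B}^{D}_{\alpha,p}$ one first bounds $\|\mathcal{B}^{D}_{\alpha,p}u\|_{\Bessel{\alpha-2}{p}(\Omega)}\le C\|u\boldb\|_{\Bessel{\alpha-1}{p}(\Omega)}$ using the duality identities \eqref{eq:pairing-1}--\eqref{eq:pairing-2} and \eqref{eq:gradient-bounded}, then invokes Lemma~\ref{lem:Gerhardt}(i); for $\mathcal{B}^{N}_{\alpha,p}$ the same argument goes through with the $\oBessel{\alpha-1}{p}$-estimate from Lemma~\ref{lem:Gerhardt}(i), matching the larger test space $\Bessel{2-\alpha}{p'}(\Omega)$; for $\mathcal{B}^{*}_{2-\alpha,p'}$ one uses Lemma~\ref{lem:Gerhardt}(ii), applicable because $(2-\alpha,p')\in\mathscr{B}'$. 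The compact embedding $\Bessel{\alpha}{p}(\Omega)\hookrightarrow L^{p}(\Omega)$ required by the criterion is immediate from Theorem~\ref{thm:Sobolev-embedding}(ii) whenever $\alpha>0$.

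The only case where this scheme does not fit verbatim is $\alpha=0$, in which $\Bessel{0}{p}(\Omega)=L^{p}(\Omega)$ and the identity embedding is trivially non-compact. Here I would take $Z=\Bessel{-1}{p}(\Omega)$ instead, which receives $L^{p}(\Omega)$ compactly by dualizing the compact embedding $\oBessel{1}{p'}(\Omega)\hookrightarrow L^{p'}(\Omega)$ through Schauder's theorem. The substitute inequality is produced by splitting $\boldb=\boldb_{\varepsilon}+(\boldb-\boldb_{\varepsilon})$ with $\boldb_{\varepsilon}\in C_{0}^{\infty}(\Omega)^{n}$ close to $\boldb$ in $L^{n}(\Omega)$: Lemma~\ref{lem:basic-estimates} handles the rough part, producing the $\varepsilon\|u\|_{L^{p}(\Omega)}$ contribution, while pairing $u\boldb_{\varepsilon}$ against $\Bessel{1}{p'}(\Omega)$-test functions and using smoothness of $\boldb_{\varepsilon}$ yields the $C_{\varepsilon}\|u\|_{\Bessel{-1}{p}(\Omega)}$ contribution. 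The main technical point---essentially the only obstacle---is keeping straight whether the codomain is $\oBessel{\alpha-2}{p}$ or $\Bessel{\alpha-2}{p}$ and correspondingly which of the two norms of $u\boldb$ supplied by Lemma~\ref{lem:Gerhardt} to extract; since both appear there with the same right-hand side, this is merely bookkeeping.
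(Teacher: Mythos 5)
Your proof is correct and takes essentially the same route as the paper: the paper likewise combines the estimate $\norm{\mathcal{B}u}{}\le\varepsilon\norm{u}{\Bessel{\alpha}{p}(\Omega)}+C_{\varepsilon}\norm{u}{\Leb{p}(\Omega)}$ from Lemma \ref{lem:Gerhardt} with the compact embedding $\Bessel{\alpha}{p}(\Omega)\hookrightarrow\Leb{p}(\Omega)$, merely packaging the argument as complete continuity on a reflexive space instead of your direct $\varepsilon$-trick on bounded sequences, and handles the bookkeeping between the $\oBessel{\alpha-1}{p}$- and $\Bessel{\alpha-1}{p}$-norms exactly as you indicate. Your separate treatment of the endpoint $\alpha=0$ (where the identity embedding of $\Leb{p}(\Omega)$ is not compact) is a point the paper's written proof silently glosses over when it deduces strong $\Leb{p}$-convergence from weak $\Bessel{\alpha}{p}$-convergence; this is harmless in practice since the lemma is only ever applied with $(\alpha,p)\in\mathscr{A}\cap\mathscr{B}$, which forces $\alpha>0$, but your patch via $\Bessel{-1}{p}(\Omega)$ and Schauder's theorem is sound and makes the lemma correct as literally stated.
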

\begin{proof}
It was shown in \cite[Proposition 2.9]{MR1331981} that $\Bessel{\beta}{q}(\Omega)$ is reflexive  for $\beta>0$ and $1<q<\infty$. Hence to prove the compactness of $\mathcal{B}_{\alpha,p}^{N}$, it suffices to show that $\mathcal{B}_{\alpha,p}^N$ is completely continuous; that is, if $u_k \rightarrow u$ weakly in $\Bessel{\alpha}{p}(\Omega)$, then $\mathcal{B}_{\alpha,p}^Nu_{k}\rightarrow\mathcal{B}_{\alpha,p}^Nu$
strongly in $\oBessel{\alpha-2}p(\Omega)$.

Let $\varepsilon>0$ be given. By Lemma \ref{lem:Gerhardt} (i),  there is a constant $C_\varepsilon =C(\varepsilon,n,\alpha,p,\boldb,\Omega)>0$ such that
\begin{align*}
\left|\action{\mathcal{B}_{\alpha,p}^N u,\phi}\right| & =\left|\action{u\boldb,\nabla\phi}\right|\le\norm{u\boldb}{\oBessel{\alpha-1}p\left(\Omega\right)}\norm{\nabla\phi}{\Bessel{1-\alpha}{p'}\left(\Omega\right)}\\
 & \le\left(\varepsilon\norm u{\Bessel{\alpha}p\left(\Omega\right)}+C_{\varepsilon}\norm u{\Leb p\left(\Omega\right)}\right)\norm{\phi}{\Bessel{2-\alpha}{p'}\left(\Omega\right)}
\end{align*}
for all $(u,\phi) \in \Bessel{\alpha}{p}(\Omega)\times \Bessel{2-\alpha}{p'}(\Omega)$. Hence it follows that
\begin{equation}\label{eq:compactness-of-operator}
\norm{\mathcal{B}_{\alpha,p}^N u}{\oBessel{\alpha-2}{p}(\Omega)} \leq \varepsilon \norm u{\Bessel{\alpha}p\left(\Omega\right)}+C_{\varepsilon}\norm u{\Leb p\left(\Omega\right)}\quad \text{for all } u \in \Bessel{\alpha}{p}(\Omega). \end{equation}
 Suppose that $u_k \rightarrow u$ weakly in $\Bessel{\alpha}{p}(\Omega)$ and  $\norm u{\Bessel{\alpha}p(\Omega)}\le M=\sup_{k}\norm{u_{k}}{\Bessel{\alpha}p(\Omega)}<\infty$.  Then by Theorem \ref{thm:Sobolev-embedding} (ii), we have
\[
 u_{k}\rightarrow u\quad\text{strongly in }\Leb p(\Omega).
\]
Thus by \eqref{eq:compactness-of-operator}, we get
\[
\limsup_{k\rightarrow\infty}\norm{\mathcal{B}_{\alpha,p}^Nu_{k}-\mathcal{B}_{\alpha,p}^Nu}{\oBessel{\alpha-2}p(\Omega)}\le2M\varepsilon.
\]
Since $\varepsilon>0$ was arbitrary chosen, it follows that $\mathcal{B}_{\alpha,p}^Nu_{k}\rightarrow\mathcal{B}_{\alpha,p}^Nu$
strongly in $\oBessel{\alpha-2}p(\Omega)$, which proves that $\mathcal{B}_{\alpha,p}^N$ is completely continuous. Since the proofs for  $\mathcal{B}_{\alpha,p}^{D}$ and $\mathcal{B}_{2-\alpha,p'}^{*}$ are similar, we omit their proofs.    This completes the proof of Lemma \ref{lem:compactness-of-operators}.
\end{proof}

\section{Main results}\label{sec:main-results}
We shall assume throughout the rest of the paper that
\[\boldb \in \Leb{n}(\Omega)^n. \]
Having   introduced the sets $\mathscr{A}$ and $\mathscr{B}$ in Section \ref{sec:prelim}, we are ready to state the main results of this paper.

\subsection{The Dirichlet problems}
Our first result is concerned with unique solvability for the Dirichlet problems \eqref{eq:D-1} and \eqref{eq:D-2} with boundary data in $\bes{\alpha}{p}(\partial\Omega)$.

\begin{thm}\label{thm:Dirichlet-problems}  Let $(\alpha,p)\in \mathscr{A}\cap \mathscr{B}$.
\begin{enumerate}[label={\textnormal{(\roman*)}},ref={\roman*},leftmargin=2em]
\item \label{enu:solvability-Bessel-sp-i} For every $f\in\Bessel{\alpha-2}p(\Omega)$ and $u_{\Di}\in\bes{\alpha-1/p}p(\partial\Omega)$,
there exists a unique solution $u\in\Bessel{\alpha}p(\Omega)$
of \eqref{eq:D-1}. Moreover, this solution $u$ satisfies
\[
\norm u{\Bessel{\alpha}p(\Omega)}\le C\left(\norm f{\Bessel{\alpha-2}p(\Omega)}+\norm{u_{\Di}}{\bes{\alpha-1/p}p(\partial\Omega)}\right)
\]
for some constant $C=C\left(n,\alpha,p,\boldb,\Omega\right)$.
\item \label{enu:solvability-Bessel-sp-ii} For every $g\in\Bessel{-\alpha}{p'}(\Omega)$ and $v_{\Di}\in\bes{1+1/p-\alpha}{p'}(\partial\Omega)$,
there exists a unique solution $v\in\Bessel{2-\alpha}{p'}(\Omega)$
of \eqref{eq:D-2} . Moreover, this solution $v$ satisfies
\[
\norm v{\Bessel{2-\alpha}{p'}(\Omega)}\le C\left(\norm g{\Bessel{-\alpha}{p'}(\Omega)}+\norm{v_{\Di}}{\bes{1+1/p-\alpha}{p'}(\partial\Omega)}\right)
\]
for some constant $C=C\left(n,\alpha,p,\boldb,\Omega\right)$.
\end{enumerate}
\end{thm}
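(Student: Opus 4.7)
The plan is to reduce both problems to operator equations with zero boundary data and then to apply the Fredholm alternative. For part~\eqref{enu:solvability-Bessel-sp-i}, let $\mathcal{E}$ be the extension operator of Theorem~\ref{thm:trace-theorem} and set $\tilde u := \mathcal{E}u_{\Di}\in \Bessel{\alpha}{p}(\Omega)$; seeking $u = w+\tilde u$ with $w\in \oBessel{\alpha}{p}(\Omega)$ and using Lemma~\ref{lem:basic-estimates}\eqref{enu:basic-estimates-i} (valid because $(\alpha,p)\in \mathscr{B}$), the problem \eqref{eq:D-1} becomes equivalent to
\[
  \mathcal{L}_{\alpha,p}^{D} w \;=\; f + \triangle \tilde u - \Div(\tilde u\,\boldb) \;\in\; \Bessel{\alpha-2}{p}(\Omega),
\]
with the right-hand side controlled by $\norm{f}{\Bessel{\alpha-2}{p}(\Omega)}$ and $\norm{u_{\Di}}{\bes{\alpha-1/p}{p}(\partial\Omega)}$.

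Next, decompose $\mathcal{L}_{\alpha,p}^{D} = -\triangle + \mathcal{B}_{\alpha,p}^{D}$. Since $(\alpha,p)\in \mathscr{A}$, Theorem~\ref{thm:Dirichlet-Poisson} makes $-\triangle$ an isomorphism from $\oBessel{\alpha}{p}(\Omega)$ onto $\Bessel{\alpha-2}{p}(\Omega)$, and since $(\alpha,p)\in \mathscr{B}$, Lemma~\ref{lem:compactness-of-operators} shows that $\mathcal{B}_{\alpha,p}^{D}$ is compact. Hence $\mathcal{L}_{\alpha,p}^{D}$ is a Fredholm operator of index zero, so its bijectivity is equivalent to its injectivity. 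The identical structure holds for the dual operator $\mathcal{L}_{2-\alpha,p'}^{*,D}:\oBessel{2-\alpha}{p'}(\Omega)\to \Bessel{-\alpha}{p'}(\Omega)$, since $(\alpha,p)\in \mathscr{A}\cap \mathscr{B}$ is equivalent to $(2-\alpha,p')\in \mathscr{A}\cap \mathscr{B}'$.

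The main obstacle is triviality of the kernels. I would first dispose of the base case $(\alpha,p)=(1,2)$, where Droniou~\cite{MR1908676} already proves $\Bessel{1}{2}$-uniqueness for both \eqref{eq:D-1} and \eqref{eq:D-2} with zero data. For any $(1,p)\in \mathscr{A}\cap \mathscr{B}$, a kernel element $u\in \oBessel{1}{p}(\Omega)$ of $\mathcal{L}_{1,p}^{D}$ satisfies, by Lemma~\ref{lem:basic-estimates} and Theorem~\ref{thm:Sobolev-embedding}, $\Div(u\boldb)\in \Bessel{-1}{2}(\Omega)$; applying Jerison--Kenig regularity to $-\triangle u = -\Div(u\boldb)$ then upgrades $u$ to $\oBessel{1}{2}(\Omega)$, at which point Droniou's uniqueness forces $u\equiv 0$. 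The analogous argument handles $\mathcal{L}_{1,p'}^{*,D}$. For a general $(\alpha,p)\in \mathscr{A}\cap \mathscr{B}$, the same scheme applies after a Sobolev-type regularity bootstrap, which should bring any kernel element of $\mathcal{L}_{\alpha,p}^{D}$ into $\oBessel{1}{q}(\Omega)$ for some $(1,q)\in \mathscr{A}\cap \mathscr{B}$.

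Part~\eqref{enu:solvability-Bessel-sp-ii} then follows by duality. The identities \eqref{eq:duality-B} and \eqref{eq:duality-B-2}, combined with the symmetry $\action{-\triangle w,v}=\action{w,-\triangle v}$ on $\oBessel{\alpha}{p}(\Omega)\times \oBessel{2-\alpha}{p'}(\Omega)$, show that $\mathcal{L}_{2-\alpha,p'}^{*,D}$ is the Banach space adjoint of $\mathcal{L}_{\alpha,p}^{D}$ under the identifications $\oBessel{\alpha}{p}(\Omega)' = \Bessel{-\alpha}{p'}(\Omega)$ and $\Bessel{\alpha-2}{p}(\Omega)' = \oBessel{2-\alpha}{p'}(\Omega)$. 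Bijectivity of $\mathcal{L}_{\alpha,p}^{D}$ therefore yields bijectivity of its adjoint, and the inhomogeneous boundary value $v_{\Di}$ is absorbed by one more application of $\mathcal{E}$.
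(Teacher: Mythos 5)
Your overall architecture coincides with the paper's: reduction to zero boundary data via $\mathcal{E}$ and the bilinear estimates, the splitting $\mathcal{L}_{\alpha,p}^{D}=-\triangle+\mathcal{B}_{\alpha,p}^{D}$ with $-\triangle$ invertible (Theorem \ref{thm:Dirichlet-Poisson}) and $\mathcal{B}_{\alpha,p}^{D}$ compact (Lemma \ref{lem:compactness-of-operators}), the Riesz--Schauder reduction of bijectivity to injectivity, Droniou's $\Bessel{1}{2}$-uniqueness as the base case, and duality for part (ii). The gaps are concentrated in the kernel-triviality step, and they are genuine.

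First, your treatment of $(1,p)\in\mathscr{A}\cap\mathscr{B}$ with $p<2$ does not work. The equation $-\triangle u=-\Div(u\boldb)$ with $\boldb\in\Leb{n}(\Omega)^n$ is scaling-critical: Lemma \ref{lem:basic-estimates} only yields $\Div(u\boldb)\in\Bessel{\beta-2}{q}(\Omega)$ along the line $\frac1q-\frac\beta n=\frac1p-\frac\alpha n$, so from $u\in\oBessel{1}{p}(\Omega)$ you get $\Div(u\boldb)\in\Bessel{-1}{p}(\Omega)$ and nothing with better integrability; the claimed membership in $\Bessel{-1}{2}(\Omega)$ would require $u\boldb\in\Leb{2}(\Omega)^n$, hence $\Bessel{1}{p}(\Omega)\hookrightarrow\Leb{2n/(n-2)}(\Omega)$, which holds only for $p\geq2$. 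Elliptic regularity therefore cannot move a kernel element from $\oBessel{1}{p}(\Omega)$ into $\oBessel{1}{2}(\Omega)$ when $p<2$. The paper's Lemma \ref{lem:L-1p-result} circumvents this by the Fredholm identity $\dim\ker\mathcal{L}_{1,p}^{D}=\dim\ker\mathcal{L}_{1,p'}^{*,D}$: for $p<2$ one has $p'>2$, so $\ker\mathcal{L}_{1,p'}^{*,D}\subset\ker\mathcal{L}_{1,2}^{*,D}=\{0\}$ by Droniou, and the dimension count then forces $\ker\mathcal{L}_{1,p}^{D}=\{0\}$. You need this (or an equivalent) duality step; direct regularity alone cannot supply it.

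Second, the sentence ``the same scheme applies after a Sobolev-type regularity bootstrap'' hides the hardest part of the proof. The bootstrap of Lemma \ref{lem:Sobolev-regularity} moves along a line of slope $1/n$ in the $\beta q^{-1}$-plane, whereas the lower boundary of $\tilde{\mathscr{A}}$ has slope $1/3$; for $n\geq4$ and $\varepsilon<1$ a single jump from $(\alpha,1/p)$ to $(1,1/q)$ along that line can exit $\mathscr{A}$, and Lemma \ref{lem:Sobolev-regularity} requires \emph{both} endpoints to lie in $\mathscr{A}\cap\mathscr{B}$. This is precisely why Proposition \ref{prop:uniqueness-of-D} builds the finite sequence $\alpha_{0},\alpha_{1},\dots$ and alternates regularity steps with integrability-lowering embeddings $\Bessel{\beta_{2}}{q_{1}}(\Omega)\subset\Bessel{\beta_{2}}{q_{2}}(\Omega)$, $q_{2}<q_{1}$, so as to remain inside $\mathscr{A}\cap\mathscr{B}$ at every stage. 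Without that iteration (or a substitute for it) your argument only covers the cases $n=3$ or $\varepsilon=1$. The remaining components of your proposal (the reduction of the boundary data, compactness, and the duality for part (ii)) match the paper and are fine.
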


\begin{rem*}\leavevmode
\begin{enumerate}
\item[(i)] For the special case when $n\geq 3$ and $(\alpha,p)=(1,2)$, Theorem \ref{thm:Dirichlet-problems} was already shown by Droniou \cite{MR1908676}. When $\Omega$ is a bounded $C^1$-domain in $\mathbb{R}^n$, $\Bessel{1}{p}$-results were established by Kim-Kim \cite{MR3328143} for $n\geq 3$ and Kwon \cite{K21} for $n=2$.
\item[(ii)] Theorem \ref{thm:Dirichlet-problems} extends the previous results in \cite{MR1908676,MR3328143,K21} as well as the classical work of  Jerison-Kenig \cite{MR1331981}.
\end{enumerate}  
\end{rem*}

Next, we consider unique solvability for the  Dirichlet problem \eqref{eq:D-1} with boundary data in $\Leb{2}(\partial\Omega)$.  Let $\left(\alpha,p\right)\in\mathscr{A}\cap\mathscr{B}$ be fixed. Suppose that $f\in \Bessel{\alpha-2}{p}(\Omega)$ and $u_{\Di} \in \Leb{2}(\partial\Omega)$. By Theorem \ref{thm:L2-nontangential}, there exists a unique function   $u_1 \in \mathcal{H}^2_{1/2}(\Omega)$
such that $u_1 \rightarrow u_{\Di}$ nontangentially a.e. on $\partial\Omega$.  Let us consider the following problem:
\begin{equation}\label{eq:perturbed-L2-data}
\left\{\begin{alignedat}{2}
-\triangle u_{2}+\Div\left(u_{2}\boldb\right)&=-\Div\left(u_{1}\boldb\right) &&\quad \text{in }\Omega,\\
u_{2}&=0  &&\quad \text{on }\partial\Omega.
\end{alignedat}\right.
\end{equation}
By Theorem \ref{thm:Sobolev-embedding} and Lemma \ref{lem:basic-estimates},
\[ u_1 \in \Leb{\frac{2n}{n-1}}(\Omega)\quad \text{and so}\quad \Div(u_1\boldb) \in \Bessel{-1}{\frac{2n}{n+1}}(\Omega).\]
It is easy to check that  $\left(1,{2n}/{(n+1)} \right) \in \mathscr{A}\cap\mathscr{B}$. Hence by  Theorem \ref{thm:Dirichlet-problems} \eqref{enu:solvability-Bessel-sp-i},  there exists a unique solution $u_{2}\in \oBessel 1{\frac{2n}{n+1}}(\Omega)$ of \eqref{eq:perturbed-L2-data}.  The same theorem also shows that  there exists a unique solution $u_3 \in \oBessel{\alpha}{p}(\Omega)$ of the problem \eqref{eq:D-1} with trivial boundary data. Define $u=u_1+u_2+u_3$. Then
\[  u\in  \haSob{1/2}2(\Omega)+\oBessel 1{\frac{2n}{n+1}}(\Omega)+\oBessel{\alpha}p(\Omega) \]
and
\[   -\triangle u +\Div(u\boldb) =f\quad \text{in } \Omega. \]

To proceed further, we need the following lemma which will be proved in  Section \ref{subsec:L2-boundary}.
\begin{lem}\label{lem:sumspace-decomposition}
Let $(\alpha,p) \in \mathscr{A}$ and $0<\alpha \leq 1$. Then
\[    \haSob{1/2}{2}(\Omega)\cap \left(\oBessel{1}{\frac{2n}{n+1}}(\Omega)+\oBessel{\alpha}{p}(\Omega) \right) =\{0\}. \]
\end{lem}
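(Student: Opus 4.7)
Let $u\in \haSob{1/2}{2}(\Omega)$ with $u=v+w$, where $v\in \oBessel{1}{\frac{2n}{n+1}}(\Omega)$ and $w\in\oBessel{\alpha}{p}(\Omega)$. By Proposition \ref{prop:JK-nontangential-maximal-function}, $u^{*}\in L^{2}(\partial\Omega)$ and there exists $g\in L^{2}(\partial\Omega)$ with $u\to g$ nontangentially a.e.\ on $\partial\Omega$. In view of the uniqueness in Theorem \ref{thm:L2-nontangential}, it suffices to prove $g=0$ a.e.\ on $\partial\Omega$.

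The plan is to test $g$ against the normal derivatives of smooth harmonic functions via Green's identity on interior Lipschitz approximations. Pick a smooth family $\Omega_{t}\Subset\Omega$ with $\Omega_{t}\uparrow\Omega$ as $t\to 0^{+}$ whose Lipschitz constants are bounded uniformly in small $t$ (as in \cite{MR769382}), and fix a harmonic $\phi\in C^{\infty}(\overline{\Omega})$, e.g.\ a harmonic polynomial. Since $u$ is harmonic, hence smooth on $\overline{\Omega_{t}}$, Green's first identity together with $\triangle\phi=0$ gives
\[
\int_{\Omega_{t}}\nabla u\cdot \nabla\phi\,dx \;=\; \int_{\partial\Omega_{t}} u\,\partial_{\nu}\phi\,d\sigma_{t},
\]
whose right-hand side converges to $\int_{\partial\Omega} g\,\partial_{\nu}\phi\,d\sigma$ as $t\to 0^{+}$ by $u^{*}\in L^{2}$ and dominated convergence on the nontangential cones. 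Moreover, $u\in C^{\infty}(\overline{\Omega_{t}})$ forces $w=u-v\in \Bessel{1}{\frac{2n}{n+1}}(\Omega_{t})$ (while $v$ is there by restriction), so applying Green's identity to each summand separately yields
\[
\int_{\partial\Omega_{t}} u\,\partial_{\nu}\phi\,d\sigma_{t} \;=\; \int_{\partial\Omega_{t}} v\,\partial_{\nu}\phi\,d\sigma_{t} \;+\; \int_{\partial\Omega_{t}} w\,\partial_{\nu}\phi\,d\sigma_{t}.
\]
I would show that both right-hand terms tend to $0$ as $t\to 0^{+}$. Approximate $v$ by $v_{k}\in C_{0}^{\infty}(\Omega)$ in $\Bessel{1}{\frac{2n}{n+1}}(\Omega)$ and $w$ by $w_{k}\in C_{0}^{\infty}(\Omega)$ in $\Bessel{\alpha}{p}(\Omega)$ (possible by the definition of $\oBessel{}{}$); invoke the (uniform in small $t$) trace bounds $\|\cdot\|_{L^{1}(\partial\Omega_{t})}\lesssim \|\cdot\|_{\Bessel{1}{\frac{2n}{n+1}}(\Omega)}$ and $\|\cdot\|_{L^{1}(\partial\Omega_{t})}\lesssim \|\cdot\|_{\Bessel{\alpha}{p}(\Omega)}$ furnished by Theorem \ref{thm:trace-theorem} and Theorem \ref{thm:Sobolev-embedding}(iii) (the second bound being legitimate since $\alpha>1/p$ by $(\alpha,p)\in\mathscr{A}$); and observe that $v_{k}$, $w_{k}$ vanish identically on $\partial\Omega_{t}$ once $t<\dist(\supp v_{k},\partial\Omega)$ (resp.\ $t<\dist(\supp w_{k},\partial\Omega)$). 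A standard $\varepsilon$-$\delta$ argument then produces the claim.

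Consequently $\int_{\partial\Omega} g\,\partial_{\nu}\phi\,d\sigma=0$ for every harmonic $\phi\in C^{\infty}(\overline{\Omega})$. By the density of $\{\partial_{\nu}\phi\restriction_{\partial\Omega}:\phi\in C^{\infty}(\overline{\Omega}),\ \triangle\phi=0\}$ in $\{\eta\in L^{2}(\partial\Omega):\int_{\partial\Omega}\eta\,d\sigma=0\}$---which follows from Theorem \ref{thm:Neumann-Poisson} applied to smooth Neumann data combined with Runge-type approximation of harmonic $\Bessel{1}{2}$-functions by harmonic functions smooth up to the boundary---$g$ must be constant a.e.\ on $\partial\Omega$; call it $c$. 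Then Theorem \ref{thm:L2-nontangential} forces $u\equiv c$, so $c=v+w$; since $w=c-v\in \Bessel{1}{\frac{2n}{n+1}}(\Omega)$, taking traces in this space yields $c=\Tr v+\Tr w=0$, hence $u\equiv 0$. The principal technical obstacle is the uniform-in-$t$ trace bound for $v$ and $w$ on the shifted boundaries $\partial\Omega_{t}$, which relies on the uniform boundedness of the Lipschitz constants of $\Omega_{t}$---a standard property of the canonical inward normal-shift family of subdomains of a bounded Lipschitz domain.
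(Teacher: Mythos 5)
Your overall strategy---identify the nontangential limit $g$ of $u$ via Proposition \ref{prop:JK-nontangential-maximal-function}, then kill it by pairing against normal derivatives of harmonic test functions on Verchota's approximating surfaces $\partial\Omega_t$---is genuinely different from the paper's proof, which splits into the cases $\alpha>1/2$ (embed the sum space into a single $\oBessel{\alpha}{q}(\Omega)$ with $(\alpha,q)\in\mathscr{A}$ via Lemma \ref{lem:embedding-for-uniqueness} and invoke uniqueness in Theorem \ref{thm:Dirichlet-Poisson}) and $0<\alpha\le 1/2$ (apply Lemma \ref{lem:L2-boundary-nontangential-convergence}, which represents $u$ as a double layer potential on each approximating domain). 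However, your argument has a genuine gap at the density step. The claim that $\{\partial_\nu\phi\,|_{\partial\Omega}:\phi\in C^\infty(\overline\Omega),\ \triangle\phi=0\}$ is dense in $\{\eta\in\Leb{2}(\partial\Omega):\int_{\partial\Omega}\eta\myd{\sigma}=0\}$ does not follow from Theorem \ref{thm:Neumann-Poisson} together with Runge-type approximation. Theorem \ref{thm:Neumann-Poisson} with smooth Neumann data produces a harmonic function in $\Bessel{\alpha}{p}(\Omega)$ that is in general \emph{not} smooth up to a Lipschitz boundary, and approximating it by boundary-smooth harmonic functions in the $\Bessel{1}{2}(\Omega)$ or $\Bessel{\alpha}{p}(\Omega)$ topology controls the normal derivatives only in a negative-order Besov space such as $\bes{-1/2}{2}(\partial\Omega)$, not in $\Leb{2}(\partial\Omega)$. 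Since your $g$ is merely in $\Leb{2}(\partial\Omega)$, the orthogonality relation $\int_{\partial\Omega}g\,\partial_\nu\phi_k\myd{\sigma}=0$ does not pass to the limit under such convergence, so you have not shown that $g$ annihilates a dense subspace of the mean-zero functions.

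The density statement is true, but establishing it amounts to exactly the layer-potential input your route was meant to bypass: taking $\phi(x)=E(x-y)$ with $y\notin\overline\Omega$ ($E$ the fundamental solution of the Laplacian), which are admissible test functions in your scheme, your orthogonality relation says precisely that the double layer potential of $g$ vanishes on $\mathbb{R}^n\setminus\overline\Omega$; concluding that $g$ is constant then requires the jump relations and Verchota's theorem on the kernel of the boundary double-layer operator on $\Leb{2}(\partial\Omega)$ \cite{MR769382}---the same machinery the paper deploys inside Lemma \ref{lem:L2-boundary-nontangential-convergence}. If you import that result explicitly, your proof closes and has the merit of treating all $0<\alpha\le1$ uniformly; as written, the decisive step is asserted rather than proved. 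Two smaller points to tighten: the uniform-in-$t$ bound $\norm{\cdot}{\Leb{1}(\partial\Omega_t)}\le C\norm{\cdot}{\Bessel{\alpha}{p}(\Omega)}$ needs the remark that the trace constants depend only on the Lipschitz character of $\Omega_t$ (the paper uses the same fact in \eqref{eq:boundary-behavior-trace}); and the final identity $c=\Tr v+\Tr w=0$ requires checking that the traces computed in $\Bessel{1}{\frac{2n}{n+1}}(\Omega)$ and in $\Bessel{\alpha}{p}(\Omega)$ are compatible on the sum space, which is most easily seen by running your $\partial\Omega_t$ limiting argument once more on $v+w=c$.
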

Lemma \ref{lem:sumspace-decomposition} motivates us to introduce  the following definition.
\begin{defn}
For  $\left(\alpha,p\right)\in\mathscr{A}$ and $0<\alpha \leq 1$, we denote by $D_{\alpha}^{p}(\Omega)$
the Banach space
\[
\haSob{1/2}{2}(\Omega) \oplus \left(\oBessel 1{\frac{2n}{n+1}}(\Omega)+\oBessel{\alpha}p(\Omega)\right)
\]
equipped with the natural norm. The projection operator of $D_{\alpha}^{p}(\Omega)$
onto $\haSob{1/2}2(\Omega)$ is  denoted by $\mathbb{P}_{\alpha,p}$
or simply by $\mathbb{P}$.
\end{defn}

Now we are ready to state unique solvability and regularity results  for the Dirichlet problem \eqref{eq:D-1} with boundary data in $\Leb 2(\partial\Omega)$.
\begin{thm}
\label{thm:L2-boundary}Let $(\alpha,p)\in\mathscr{A}\cap\mathscr{B}$. For every $f\in\Bessel{\alpha-2}p(\Omega)$ and $u_{\Di} \in \Leb{2}(\partial\Omega)$, there exists a unique function  $u\in D_{\alpha}^{p}(\Omega)$ such that
\begin{align*}
\textnormal{(a)}\quad &\displaystyle -\int_{\Omega}u\left(\triangle\phi+\boldb\cdot\nabla\phi\right)\myd{x}=\left\langle f,\phi\right\rangle\quad \text{for all }  \phi\in C_{0}^{\infty}(\Omega),\\
\textnormal{(b)}\quad & \mathbb{P}u\rightarrow u_{\Di }\quad \text{nontangentially a.e. on } \partial\Omega.
\end{align*}
Moreover, we have
\[    \norm{u}{D_\alpha^p(\Omega)} \leq C \left( \norm{f}{\Bessel{\alpha-2}{p}(\Omega)} + \norm{u_{\Di}}{\Leb{2}(\partial\Omega)}  \right).\]
In addition, if $u_{\Di} \in \Leb{q}(\Omega)$ and $2\leq q\leq \infty$, then
\[     u \in \Bessel{1/q}{q}(\Omega)+ \Bessel{\alpha}{p}(\Omega).       \]
\end{thm}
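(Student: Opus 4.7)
\emph{Existence} follows the decomposition already sketched in the paragraphs preceding the theorem. Given $f\in\Bessel{\alpha-2}{p}(\Omega)$ and $u_{\Di}\in\Leb{2}(\partial\Omega)$, set $u=u_{1}+u_{2}+u_{3}$, where $u_{1}\in\haSob{1/2}{2}(\Omega)$ is the harmonic function with nontangential limit $u_{\Di}$ supplied by Theorem~\ref{thm:L2-nontangential}; $u_{2}\in\oBessel{1}{\frac{2n}{n+1}}(\Omega)$ solves~\eqref{eq:perturbed-L2-data} via Theorem~\ref{thm:Dirichlet-problems}\eqref{enu:solvability-Bessel-sp-i} applied to the admissible pair $(1,2n/(n+1))\in\mathscr{A}\cap\mathscr{B}$ (note that $\Div(u_{1}\boldb)\in\Bessel{-1}{2n/(n+1)}(\Omega)$ by Theorem~\ref{thm:Sobolev-embedding} together with Lemma~\ref{lem:basic-estimates}\eqref{enu:basic-estimates-i}); and $u_{3}\in\oBessel{\alpha}{p}(\Omega)$ solves~\eqref{eq:D-1} with data $(f,0)$ via the same theorem. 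Adding the three weak equations verifies~(a), while the direct-sum structure of $D_{\alpha}^{p}(\Omega)$ from Lemma~\ref{lem:sumspace-decomposition} identifies $\mathbb{P}u=u_{1}$, so that~(b) is immediate. The $D_{\alpha}^{p}$-norm bound follows by chaining the three a~priori estimates.

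\emph{Uniqueness} is the main content. Assume $f=0$, $u_{\Di}=0$, and $u\in D_{\alpha}^{p}(\Omega)$ satisfies~(a) and~(b). The direct-sum decomposition $u=v+w$ with $v=\mathbb{P}u\in\haSob{1/2}{2}(\Omega)$ and $w\in\oBessel{1}{\frac{2n}{n+1}}(\Omega)+\oBessel{\alpha}{p}(\Omega)$ is unique. Since $v$ is harmonic with zero nontangential limit, the uniqueness part of Theorem~\ref{thm:L2-nontangential} forces $v=0$. It then suffices to deduce $w=0$ from $-\triangle w+\Div(w\boldb)=0$ in $\mathscr{D}'(\Omega)$. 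The announced Lemma~\ref{lem:embedding-for-uniqueness} should embed this sum space into a single $\oBessel{\alpha_{0}}{p_{0}}(\Omega)$ with $(\alpha_{0},p_{0})\in\mathscr{A}\cap\mathscr{B}$, whereupon Theorem~\ref{thm:Dirichlet-problems}\eqref{enu:solvability-Bessel-sp-i} gives $w=0$ at once. The main obstacle, and presumably the role of Lemma~\ref{lem:L2-boundary-nontangential-convergence}, is to certify that $w$ has vanishing boundary behavior in the stronger sense required by this embedded space, thereby upgrading the purely distributional identity~(a) to a weak formulation against a sufficiently large test class; an alternative route is to test~(a) against solutions of the dual problem~\eqref{eq:D-2} supplied by Theorem~\ref{thm:Dirichlet-problems}\eqref{enu:solvability-Bessel-sp-ii}, and the same density/trace issue is the crux there as well.

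\emph{Regularity.} Suppose in addition that $u_{\Di}\in\Leb{q}(\partial\Omega)$ with $2\leq q\leq\infty$. Theorem~\ref{thm:L2-nontangential} upgrades $u_{1}$ to $\Bessel{1/q}{q}(\Omega)$, which via Theorem~\ref{thm:Sobolev-embedding} improves the integrability of $u_{1}\boldb$ and hence places $\Div(u_{1}\boldb)$ in a strictly better Bessel space. Reapplying Theorem~\ref{thm:Dirichlet-problems}\eqref{enu:solvability-Bessel-sp-i} to the corresponding perturbed problem with a more favorable admissible pair supplies a correcting summand lying in $\Bessel{1/q}{q}(\Omega)+\Bessel{\alpha}{p}(\Omega)$; combined with $u_{3}\in\Bessel{\alpha}{p}(\Omega)$ and $u_1\in \Bessel{1/q}{q}(\Omega)$, this yields $u$ inside the claimed sum space, and the uniqueness part identifies this representative with the $u$ already produced.
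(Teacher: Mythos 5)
Your existence sketch coincides with the paper's (it is the construction already given in Section \ref{sec:main-results}), and your regularity sketch is essentially the paper's argument for $2\le q<\infty$; note only that the endpoint $q=\infty$ needs a separate treatment, where $\Div(u_1\boldb)\in\Bessel{\alpha-2}{p}(\Omega)$ comes from H\"older's inequality and the embedding $\Bessel{1-\alpha}{p'}(\Omega)\hookrightarrow\Leb{n'}(\Omega)$ guaranteed by $(\alpha,p)\in\mathscr{B}$, rather than from an improved admissible pair.

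The genuine gap is in the uniqueness part, and it is exactly where you stop. Lemma \ref{lem:embedding-for-uniqueness} embeds $\oBessel{1}{\frac{2n}{n+1}}(\Omega)+\oBessel{\alpha}{p}(\Omega)$ into a single $\oBessel{\alpha}{q}(\Omega)$ with $(\alpha,q)\in\mathscr{A}\cap\mathscr{B}$ \emph{only under the hypothesis} $1/2<\alpha\le 1$; for $0<\alpha\le 1/2$ no such embedding holds, and this is precisely the case you leave unresolved. Moreover, the role of Lemma \ref{lem:L2-boundary-nontangential-convergence} is not to ``upgrade the test class'' of the distributional identity (a), nor does the paper test against dual solutions. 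The actual device is a subtraction: since $(\alpha,p)$ satisfies \eqref{eq:trace-control-2}, Corollary \ref{cor:trace-control} and Lemma \ref{lem:basic-estimates} give $u\in\Leb{\frac{2n}{n-1}}(\Omega)$ and $\Div(u\boldb)\in\Bessel{-1}{\frac{2n}{n+1}}(\Omega)$; one then solves $\triangle v=\Div(u\boldb)$ with $v\in\oBessel{1}{\frac{2n}{n+1}}(\Omega)$, so that $w=u-v$ is \emph{harmonic} and still lies in $\oBessel{1}{\frac{2n}{n+1}}(\Omega)+\oBessel{\alpha}{p}(\Omega)$. Lemma \ref{lem:L2-boundary-nontangential-convergence} (the Verchota-approximation/layer-potential argument) then yields $w^*\in\Leb{2}(\partial\Omega)$ and $w\rightarrow\Tr w=0$ nontangentially a.e., so $w=0$ by Proposition \ref{prop:JK-nontangential-maximal-function} and Theorem \ref{thm:L2-nontangential}; finally $u=v\in\oBessel{1}{\frac{2n}{n+1}}(\Omega)$ is annihilated by Theorem \ref{thm:Dirichlet-problems} applied to the pair $\left(1,\frac{2n}{n+1}\right)\in\mathscr{A}\cap\mathscr{B}$. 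Without this reduction to a harmonic remainder, your proof of uniqueness is incomplete whenever $\alpha\le 1/2$.
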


\begin{rem*}\leavevmode
\begin{enumerate}[label={\textnormal{(\roman*)}},ref={\roman*},leftmargin=2em]
\item {Suppose that $(\alpha,p)\in \mathscr{A}$ and $\boldb =0$.  Combining Theorems \ref{thm:trace-theorem}, \ref{thm:L2-nontangential}, \ref{thm:Dirichlet-Poisson}, and  \ref{thm:Dirichlet-Poisson-2},} we can prove that for every $f\in \Bessel{\alpha-2}{p}(\Omega)$ and $u_{\Di} \in \Leb{2}(\partial\Omega)$, there exists a unique function $u \in \haSob{1/2}{2}(\Omega) \oplus\oBessel{\alpha}{p}(\Omega) $ satisfying
\begin{align*}
\textnormal{(a)}\quad &\displaystyle -\int_{\Omega}u \triangle\phi \myd{x}=\left\langle f,\phi\right\rangle\quad \text{for all }  \phi\in C_{0}^{\infty}(\Omega),\\
\textnormal{(b)}\quad & \mathbb{P}u\rightarrow u_{\Di }\quad \text{nontangentially a.e. on } \partial\Omega.
\end{align*}
where $\mathbb{P}$ is the projection operator from $\haSob{1/2}{2}(\Omega)\oplus \oBessel{\alpha}{p}(\Omega)$ onto $\haSob{1/2}{2}(\Omega)$. Since $u- \mathbb{P} u \in \oBessel{\alpha}{p}(\Omega)$, it follows from Theorem \ref{thm:trace-theorem} (ii) that  $\Tr (u- \mathbb{P} u )=0$ on $\partial\Omega$.

\item Suppose that $f\in \Bessel{\alpha-2}{p}(\Omega)$ and $u_{\Di}\in \Leb{q}(\partial\Omega)$, where $(\alpha,p)\in \mathscr{A}\cap \mathscr{B}$ and $2\leq q<\infty$ satisfy
\[   \frac{1}{q}\leq \alpha\quad \text{and}\quad  \frac{1}{p}-\frac{\alpha}{n} \leq \frac{1}{q}-\frac{1/q}{n}. \]
Then by Theorem \ref{thm:Sobolev-embedding}, $\Bessel{\alpha}{p}(\Omega)\hookrightarrow \Bessel{1/q}{q}(\Omega)$.  Hence if $u\in D^{p}_{\alpha}(\Omega)$ is a solution of  \eqref{eq:D-1} given by Theorem \ref{thm:L2-boundary}, then $u\in \Bessel{1/q}{q}(\Omega)$. However, the solution $u$ cannot be bounded even when $f=0$ and $u_{\Di}$ is constant. Define
\[    u(x) = -\ln|x|\quad \text{and }\quad \boldb(x) = \frac{x}{|x|^2 \ln |x|}\quad \text{for } x\in B_{1/2},\]
where  $B_{r}$ is the open ball of radius $r$ centered at the origin. Then $u$ satisfies 
\[  -\triangle u +\Div(u\boldb)=0\quad \text{in } B_{1/2}\quad \text{and}\quad u=\ln 2\quad \text{on } \partial B_{1/2}.  \]
Note that $u\in \Bessel{1/q}{q}(B_{1/2})$ for all $2\leq q<\infty$ but $u\notin \Leb{\infty}(B_{1/2})$ while $\boldb \in \Leb{n}(B_{1/2})^n$.
\item
One may ask existence of a solution of the problem \eqref{eq:D-2} with boundary data $v_{\Di }\in\Leb 2(\partial\Omega)$.
Following our previous  strategy, we first find a  function $v_1 \in \haSob{1/2}{2}(\Omega)$ such that $v_{1} \rightarrow v_{\Di}$ nontangentially a.e. on $\partial\Omega$.
The second step is to solve the following problem with zero boundary condition:
\begin{equation}\label{eq:perturbed-L2-data-1}
\left\{\begin{alignedat}{2}
-\triangle v_{2}+\boldb\cdot\nabla v_{2}&=-\boldb\cdot\nabla v_{1}&&\quad \text{in }\Omega,\\
v_{2}&=0 &&\quad \text{on }\partial\Omega.
\end{alignedat}\right.
\end{equation}
However, since   $v_1 \in L_{\alpha}^p (\Omega )$ with $p=2$ and $\alpha =1/2 <1$, Lemma \ref{lem:basic-estimates} (ii) cannot be used to show  that $\boldb\cdot\nabla v_{1} \in L_{\beta-2}^q$ for some $(\beta , q)\in \mathscr{A}\cap\mathscr{B}$.  Hence  existence of a solution $v_2$ of (\ref{eq:perturbed-L2-data-1}) cannot be deduced from Theorem \ref{thm:Dirichlet-problems} (ii). It seems to be still open to prove existence of   solutions of  \eqref{eq:D-2} with boundary data $v_{\Di }\in\Leb 2(\partial\Omega)$ for general $\boldb \in \Leb{n}(\Omega)^n$. It should be remarked that the problem  \eqref{eq:D-2}  can be solved  for any $v_{\Di }\in\Leb 2(\partial\Omega)$ if $\boldb$ is more regular. For instance, Sakellaris \cite{MR4014299} recently proved that if $\boldb \in \Leb{q}(\Omega)^n$, $q>n\geq 3$, and $\Div \boldb \geq 0$ in $\Omega$, then for every $v_{\Di}\in \Leb{2}(\partial\Omega)$, there exists a unique solution $v$ of   \eqref{eq:D-2}  with $g=0$ which satisfies the boundary condition in the sense of nontangential convergence.
\end{enumerate}
\end{rem*}

\subsection{The Neumann problems}
In this subsection, we state the main result for the Neumann problems \eqref{eq:N-1} and \eqref{eq:N-2} on bounded Lipschitz domains in $\mathbb{R}^n$, $n\geq 3$.

\begin{thm} \label{thm:Neumann} Let $n\geq 3$ and  $(\alpha,p)\in \mathscr{A}\cap\mathscr{B}$.
\begin{enumerate}[label={\textnormal{(\roman*)}},ref={\roman*},leftmargin=2em]
\item There exists a positive function $\hat{u}\in \Bessel{\alpha}{p}(\Omega)$  satisfying
\[     \action{\nabla \hat{u},\nabla \phi}  -\action{\hat{u}\boldb,\nabla \phi} = 0 \]
for all $\phi \in \Bessel{2-\alpha}{p'}(\Omega)$. In fact, $\hat{u}\in \Bessel{\beta}{q}(\Omega)$ for all $(\beta,q)\in \mathscr{A}\cap\mathscr{B}$.
\item For every $f\in\oBessel{\alpha-2}p(\Omega)$ and $u_{\Neu}\in\bes{\alpha-1-1/p}p(\partial\Omega)$ satisfying the compatibility condition $\action{f,1}+\action{u_{\Neu},1}=0$,
there exists a unique function  $u\in\Bessel{\alpha}p(\Omega)$
 with  $\int_\Omega  u \myd{x}=0$ such that
 \begin{equation*}
    \action{\nabla u,\nabla \phi} -\action{u\boldb,\nabla \phi} = \action{f,\phi}+\action{u_{\Neu},\Tr \phi}
\end{equation*}
for all $\phi \in \Bessel{2-\alpha}{p'}(\Omega)$.
 Moreover,  $u$ satisfies
\[
\norm u{\Bessel{\alpha}p(\Omega)}\le C\left(\norm f{\oBessel{\alpha-2}p(\Omega)}+\norm{u_{\Neu}}{\bes{\alpha-1-1/p}p(\partial\Omega)}\right)
\]
for some constant $C=C\left(n,\alpha,p,\boldb,\Omega\right)$.
\item For every $g\in\oBessel{-\alpha}{p'}(\Omega)$ and $v_{\Neu}\in\bes{1/p-\alpha}{p'}(\partial\Omega)$ satisfying the compatibility condition $\action{g,\hat{u}} + \action{v_{\Neu},\Tr \hat{u}} =0$,
there exists a unique function $v\in\Bessel{2-\alpha}{p'}(\Omega)$ with $\int_\Omega v \hat{u} \myd{x}=0$ such that
\begin{equation*}
 \action{\nabla v,\nabla \psi} -\action{\boldb\cdot\nabla v,\nabla \psi}  = \action{g,\psi}+\action{v_{\Neu},\Tr \psi}
\end{equation*}
for all $\psi \in \Bessel{\alpha}{p}(\Omega)$.
 Moreover,   $v$ satisfies
\[
\norm v{\Bessel{2-\alpha}{p'}(\Omega)}\le C\left(\norm g{\oBessel{-\alpha}{p'}(\Omega)}+\norm{v_{\Neu}}{\bes{1/p-\alpha}{p'}(\partial\Omega)}\right)
\]
for some constant $C=C\left(n,\alpha,p,\boldb,\Omega\right)$.
\end{enumerate}
\end{thm}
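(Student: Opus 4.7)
The plan is to follow the Riesz--Schauder scheme sketched in the introduction, parallel to the proof of Theorem~\ref{thm:Dirichlet-problems}. Define bounded linear operators
\[
\mathcal{L}_{\alpha,p}^{N}\colon \Bessel{\alpha}{p}(\Omega)\to \bigl(\Bessel{2-\alpha}{p'}(\Omega)\bigr)', \qquad \mathcal{L}_{2-\alpha,p'}^{*,N}\colon \Bessel{2-\alpha}{p'}(\Omega)\to \bigl(\Bessel{\alpha}{p}(\Omega)\bigr)'
\]
by
\[
\action{\mathcal{L}_{\alpha,p}^{N} u,\phi}=\action{\nabla u,\nabla\phi}-\action{u\boldb,\nabla\phi},\qquad \action{\mathcal{L}_{2-\alpha,p'}^{*,N} v,\psi}=\action{\nabla v,\nabla\psi}-\action{\boldb\cdot\nabla v,\psi},
\]
which are well defined since $(\alpha,p)\in \mathscr{A}\cap\mathscr{B}$, by Lemma~\ref{lem:basic-estimates}, and are adjoint to each other via \eqref{eq:duality-B}. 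Splitting $\mathcal{L}_{\alpha,p}^{N}=\mathcal{P}+\mathcal{B}_{\alpha,p}^{N}$, where $\mathcal{P}$ is the Poisson part (Fredholm of index $0$ with one-dimensional kernel and cokernel both equal to constants, by Theorem~\ref{thm:Neumann-Poisson}) and $\mathcal{B}_{\alpha,p}^{N}$ is compact by Lemma~\ref{lem:compactness-of-operators}, both $\mathcal{L}_{\alpha,p}^{N}$ and $\mathcal{L}_{\alpha,p}^{N}+\lambda\mathcal{I}_p$ (and analogously $\mathcal{L}_{2-\alpha,p'}^{*,N}$) are Fredholm of index zero for every $\lambda\in\mathbb{R}$, so it suffices to determine their kernels.

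I would anchor the kernel analysis at $(\alpha,p)=(1,2)$: the works of Droniou--V\'azquez \cite{MR2476418} and Kang--Kim \cite{MR3623550} yield $\ker(\mathcal{L}_{1,2}^{N}+\lambda\mathcal{I}_2)=\ker(\mathcal{L}_{1,2}^{*,N}+\lambda\mathcal{I}_2)=\{0\}$ for all $\lambda>0$, together with $\ker(\mathcal{L}_{1,2}^{*,N})=\mathbb{R}\cdot 1$ and $\ker(\mathcal{L}_{1,2}^{N})=\mathbb{R}\hat{u}$ for a strictly positive $\hat{u}\in \Bessel{1}{2}(\Omega)$. To transfer these to arbitrary $(\alpha,p)\in\mathscr{A}\cap\mathscr{B}$, I apply a regularity bootstrap in the spirit of Lemma~\ref{lem:Sobolev-regularity}: given $u\in\Bessel{\alpha}{p}(\Omega)$ with $\mathcal{L}_{\alpha,p}^{N}u=0$, read the equation as the Poisson--Neumann problem $-\triangle u=-\Div(u\boldb)$ with $\gamma_\nu(\nabla u-u\boldb)=0$, estimate the right-hand side by Lemma~\ref{lem:Gerhardt} and Theorem~\ref{thm:Sobolev-embedding}, and iterate Theorem~\ref{thm:Neumann-Poisson} along a chain of admissible pairs until $(1,2)$ is reached, forcing $u\in\mathbb{R}\hat{u}$. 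The reverse bootstrap applied to $\hat{u}$ itself yields $\hat{u}\in\Bessel{\beta}{q}(\Omega)$ for every $(\beta,q)\in\mathscr{A}\cap\mathscr{B}$, and an identical argument for the adjoint pins down $\ker(\mathcal{L}_{2-\alpha,p'}^{*,N})=\mathbb{R}\cdot 1$ and triviality of all $\lambda>0$ kernels. This proves (i).

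Parts (ii) and (iii) then follow from the Fredholm alternative. For (ii), the cokernel of $\mathcal{L}_{\alpha,p}^{N}$ coincides with $\ker(\mathcal{L}_{2-\alpha,p'}^{*,N})=\mathbb{R}\cdot 1$, giving the compatibility $\action{f,1}+\action{u_{\Neu},1}=0$; uniqueness follows by quotienting out $\mathbb{R}\hat{u}$ through the normalization $\int_\Omega u\myd{x}=0$, and the estimate is the bounded-inverse theorem on the resulting isomorphism. For (iii), the cokernel of $\mathcal{L}_{2-\alpha,p'}^{*,N}$ equals $\ker(\mathcal{L}_{\alpha,p}^{N})=\mathbb{R}\hat{u}$, producing the compatibility $\action{g,\hat{u}}+\action{v_{\Neu},\Tr\hat{u}}=0$ and uniqueness modulo constants, fixed by $\int_\Omega v\hat{u}\myd{x}=0$. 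The main obstacle is the regularity bootstrap itself: since $\mathscr{A}\cap\mathscr{B}$ is not convex, the intermediate pairs must be chosen so that both Theorem~\ref{thm:Neumann-Poisson} and Lemma~\ref{lem:basic-estimates} apply at each step and the exponents advance monotonically toward $(1,2)$; preserving the positivity of $\hat{u}$ under the upgrade is routine once strict positivity is known at $(1,2)$, since it passes through the continuous Bessel embeddings.
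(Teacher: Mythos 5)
Your proposal is correct and follows essentially the same route as the paper: a compact-perturbation/Riesz--Schauder argument anchored at the known $(1,2)$ kernel characterizations of Droniou--V\'azquez and Kang--Kim, a regularity bootstrap of kernel elements along chains in $\mathscr{A}\cap\mathscr{B}$ (the paper's Lemma \ref{lem:Neumann-regularity-estimates}, modeled on Proposition \ref{prop:uniqueness-of-D}), and the Fredholm alternative for (ii) and (iii). The only packaging difference is that the paper factors $\mathcal{L}_{\alpha,p}^{N}+\lambda\mathcal{I}_{p}$ through the invertible $\lambda$-shifted Laplacian of Mitrea--Taylor and characterizes the range via the auxiliary compact operator $\mathcal{K}_{\alpha,p}=(\mathcal{L}_{\alpha,p}^{N}+\mathcal{I}_{p})^{-1}\circ\mathcal{I}_{p}$ on $\Leb{p}(\Omega)$, whereas you invoke the index-zero Fredholm property of the Poisson--Neumann part and range--annihilator duality directly; both are valid.
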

\begin{rem*}\leavevmode
\begin{enumerate}
\item[\rm (i)] When $(\alpha,p)=(1,2)$, Theorem \ref{thm:Neumann} was already shown by Droniou-V\`azquez \cite{MR2476418}.    Recently, $\Bessel{1}{p}$-results were obtained  by  Kang-Kim \cite{MR3623550} for  general elliptic equations of second order:
\begin{equation*}
-\Div(A\nabla u)+\Div(u\boldb) = f,\quad -\Div(A^{t}\nabla v) -\boldb \cdot \nabla v =g\quad \text{in } \Omega,
\end{equation*}
 provided that the matrix  $A$ has a small BMO semi-norm and the boundary $\partial\Omega$ has a small Lipschitz constant.
\item[\rm (ii)] Theorem \ref{thm:Neumann} extends the results in \cite{MR2476418,MR1658089} for  arbitrary Lipschitz domains as well as the results  in \cite{MR3623550} for  Lipschitz domains  having  small Lipschitz constants  when $A$ is the identity matrix.
\end{enumerate}  
\end{rem*}

It was already observed in  Example \ref{example:Neumann-fails}  that the functions $u$ and $v$ of  Theorem \ref{thm:Neumann}  (ii) and (iii) may not solve the Neumann problems \eqref{eq:N-1} and \eqref{eq:N-2}, respectively. However, if $f$ and $g$ are sufficiently regular, then these functions become solutions of the problems \eqref{eq:N-1} and \eqref{eq:N-2}.

\begin{thm}\label{thm:real-Neumann-problem} Let $n\geq 3$ and $(\alpha,p)\in \mathscr{A}\cap\mathscr{B}$.
\begin{enumerate}[label={\textnormal{(\roman*)}},ref={\roman*},leftmargin=2em]
\item Assume that $(\beta,q)$ satisfies
\[   \alpha\leq \beta,\quad  0<\frac{1}{q}<\beta-1 \leq 1,   \quad\text{and}\quad\frac{1}{p}-\frac{\alpha}{n} \geq \frac{1}{q}-\frac{\beta}{n}. \]
 For every $f\in \Bessel{\beta-2}{q}(\Omega)$ and $u_{\Neu} \in \bes{\alpha-1-1/p}{p}(\partial\Omega)$ satisfying the compatibility condition $ \action{f,1}+\action{u_{\Neu},1}=0$, there exists a unique function  $u\in \Bessel{\alpha}{p}(\Omega)$ with $\int_\Omega u  \myd{x}=0$ such that
\begin{equation}
\left\{\begin{alignedat}{2}
-\triangle u+\Div(u\boldb)& =f & \quad & \text{in }\Omega,\\
\gamma_{\nu} (\nabla u-u\boldb)& =u_{\Neu} & \quad & \text{on }\partial\Omega.\\
\end{alignedat}\right.
\end{equation}
\item   Assume that $(\beta,q)$ satisfies
\[   \beta\leq \alpha,\quad 0\leq \beta<\frac{1}{q}<1,\quad \text{and }\quad  \frac{1}{p}-\frac{\alpha}{n} = \frac{1}{q}-\frac{\beta}{n}. \]
 For every $g\in \Bessel{-\beta}{q'}(\Omega)$ and $v_{\Neu} \in \bes{1/p-\alpha}{p'}(\partial\Omega)$ satisfying the compatibility condition $ \action{g,\hat{u}}+\action{v_{\Neu},\Tr \hat{u}}=0$, there exists a unique  function  $v\in \Bessel{2-\alpha}{p'}(\Omega)$ with $\int_\Omega v\hat{u}\myd{x}=0$ such that
\begin{equation}
\left\{\begin{alignedat}{2}
-\triangle v-\boldb \cdot \nabla v& =g & \quad & \text{in }\Omega,\\
\gamma_{\nu} (\nabla v)& =v_{\Neu} & \quad & \text{on }\partial\Omega.
\end{alignedat}\right.
\end{equation}
Here $\hat{u}$ is the function in Theorem \ref{thm:Neumann} (i).
\end{enumerate}
\end{thm}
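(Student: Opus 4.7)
The plan is to adapt the strategy used in the proof of Theorem \ref{thm:real-Neumann-easy-one}, replacing the application of the Poisson-equation Neumann result (Theorem \ref{thm:Neumann-Poisson}) with the singular-drift Neumann result (Theorem \ref{thm:Neumann}). The key technical tool in both parts is the generalized normal-trace Proposition \ref{prop:normal-trace}, which lets us upgrade the variational solution obtained from Theorem \ref{thm:Neumann} to a genuine solution of the Neumann problem.

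For part (i), I first verify $\Bessel{\beta-2}{q}(\Omega)\hookrightarrow \oBessel{\alpha-2}{p}(\Omega)$; this follows from duality once we know $\Bessel{2-\alpha}{p'}(\Omega)\hookrightarrow \Bessel{2-\beta}{q'}(\Omega)=\oBessel{2-\beta}{q'}(\Omega)$ densely, which is a consequence of Theorem \ref{thm:Sobolev-embedding} and \eqref{eq:zero-equal}. Applying Theorem \ref{thm:Neumann}(ii) then produces a unique $u\in \Bessel{\alpha}{p}(\Omega)$ with $\int_\Omega u\myd{x}=0$ satisfying the weak formulation. Testing against $\phi\in C_0^\infty(\Omega)$ gives $-\triangle u+\Div(u\boldb)=f$ distributionally in $\Omega$, so $\Div F=-f\in \Bessel{\beta-2}{q}(\Omega)$ for $F:=\nabla u-u\boldb\in\Bessel{\alpha-1}{p}(\Omega)^n$. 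I then apply Proposition \ref{prop:normal-trace} with parameters $(\alpha-1,p)$ and $(\beta-1,q)$; the numerical conditions translate directly into $(\alpha,p)\in\mathscr{A}$, $\alpha\leq\beta$, and the stated inequalities on $1/q$. Combining the identity \eqref{eq:normal-trace-identity} with the weak formulation and the surjectivity of $\Tr$ from Theorem \ref{thm:trace-theorem}(iii) yields $\gamma_\nu(F)=u_{\Neu}$. Uniqueness follows by running the same argument in reverse: if $f=0$ and $u_{\Neu}=0$, then \eqref{eq:normal-trace-identity} together with $-\triangle u+\Div(u\boldb)=0$ shows that $u$ satisfies the homogeneous variational problem of Theorem \ref{thm:Neumann}(ii), hence $u=0$.

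Part (ii) proceeds analogously with $F:=\nabla v$ and Theorem \ref{thm:Neumann}(iii) in place of (ii). The admissibility $g\in\oBessel{-\alpha}{p'}(\Omega)$ comes from the dual embedding, using $0\leq\beta<1/q$ so that $\Bessel{\beta}{q}(\Omega)=\oBessel{\beta}{q}(\Omega)$. The subtle step is that to apply Proposition \ref{prop:normal-trace} to $F=\nabla v$, I must establish $\triangle v\in\Bessel{-\beta}{q'}(\Omega)$. Since the weak formulation tested on $C_0^\infty(\Omega)$ gives $\triangle v=-g-\boldb\cdot\nabla v$ distributionally, this reduces to showing $\boldb\cdot\nabla v\in\Bessel{-\beta}{q'}(\Omega)$. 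I obtain this by applying Lemma \ref{lem:basic-estimates}(ii) with $(\alpha_{\mathrm{lem}},p_{\mathrm{lem}})=(2-\alpha,p')$ and $(\beta_{\mathrm{lem}},q_{\mathrm{lem}})=(2-\beta,q')$: the equality $1/p-\alpha/n=1/q-\beta/n$ and the inequality $\beta\leq\alpha$ translate into the lemma's hypotheses, while the admissibility $(2-\alpha,p')\in\mathscr{B}'$ is equivalent to the given $(\alpha,p)\in\mathscr{B}$. With $\triangle v$ in hand, Proposition \ref{prop:normal-trace} with parameters $(1-\alpha,p')$ and $(1-\beta,q')$, whose hypotheses are again a direct translation of the assumptions on $(\alpha,p)$ and $(\beta,q)$, yields $\gamma_\nu(\nabla v)\in\bes{1/p-\alpha}{p'}(\partial\Omega)$; the same density and surjectivity argument as in part (i) then shows $\gamma_\nu(\nabla v)=v_{\Neu}$, and uniqueness follows from Theorem \ref{thm:Neumann}(iii) exactly as before.

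The main obstacle throughout will be the bookkeeping of exponents: for each invocation of Proposition \ref{prop:normal-trace} and Lemma \ref{lem:basic-estimates}, one must check that all numerical conditions are indeed satisfied by the data provided. The only nontrivial analytic point is the bilinear estimate for $\boldb\cdot\nabla v$ in part (ii), which relies critically on the duality between $\mathscr{B}$ and $\mathscr{B}'$; once this is secured, the remainder is a sequence of duality and density arguments combined with the surjectivity of the trace operator, closely mirroring the proof of Theorem \ref{thm:real-Neumann-easy-one}.
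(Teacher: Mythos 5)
Your proposal is correct and follows essentially the same route as the paper: embed the data space into the one required by Theorem \ref{thm:Neumann} via duality, invoke Theorem \ref{thm:Neumann} (ii) resp.\ (iii) for the variational solution, use Lemma \ref{lem:basic-estimates} to place $\triangle u$ (resp.\ $\triangle v$) in the right space, and then apply Proposition \ref{prop:normal-trace} together with the surjectivity of $\Tr$ to recover the Neumann condition, with uniqueness reduced to the kernel characterization behind Theorem \ref{thm:Neumann}. The exponent bookkeeping you flag (the $\mathscr{B}$--$\mathscr{B}'$ duality for the bilinear estimate on $\boldb\cdot\nabla v$, and the parameter shifts in Proposition \ref{prop:normal-trace}) checks out and matches the paper's computation.
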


See Figure \ref{fig:admissible-2} for Theorem \ref{thm:real-Neumann-problem} (i). 

\begin{figure}[h]
\begin{centering}
\includegraphics[width=0.8\textwidth,page=3]{Picture/diagram5}
\par\end{centering}
\caption{\label{fig:admissible-2}}
\end{figure}

\begin{rem*} 
Our proofs of Theorems \ref{thm:Neumann}  and \ref{thm:real-Neumann-problem} cannot be adapted  to obtain corresponding results on a bounded Lipschitz or even $C^1$-domain $\Omega$ in $\mathbb{R}^2$. First, it remains  open to prove unique solvability in $\Bessel{1}{p}(\Omega)$ for the Neumann problems $(N)$ and $(N')$ with $\boldb \in \Leb{2}(\Omega)^2$. Second, our proofs of Theorems \ref{thm:Neumann}  and \ref{thm:real-Neumann-problem} are based on  Theorem \ref{thm:MT-Neumann} which was proved by Mitrea-Taylor \cite{MR1781631} only for $n \ge 3$. 
\end{rem*}

\section{Proofs of Theorems \ref{thm:Dirichlet-problems} and \ref{thm:L2-boundary}}\label{sec:Dirichlet}
The purpose of this section is to prove Theorems \ref{thm:Dirichlet-problems} and \ref{thm:L2-boundary} which are  concerned with unique solvability for the Dirichlet problems  with boundary data in $\bes{\alpha}{p}(\partial\Omega)$ and $\Leb{2}(\partial\Omega)$, respectively.

\subsection{Proof of Theorem \ref{thm:Dirichlet-problems} }

First, we  reduce the problems \eqref{eq:D-1} and \eqref{eq:D-2} to the problems with trivial boundary data.  In the case of the Dirichlet problem \eqref{eq:D-1}, let $(\alpha,p)\in \mathscr{A}\cap\mathscr{B}$ be fixed. Set $h=\mathcal{E}u_{\Di}$, where $\mathcal{E}:\bes{\alpha-1/p}{p}(\partial\Omega)\rightarrow\Bessel{\alpha}p(\Omega)$ is the extension operator given in Theorem \ref{thm:trace-theorem}.  Then
\begin{equation*}
h\in\Bessel{\alpha}p(\Omega)\quad \text{and }\quad \norm h{\Bessel{\alpha}p(\Omega)}\le C\norm{u_{\Di}}{\bes{\alpha-1/p}{p}(\partial\Omega)}
\end{equation*}
for some constant $C=C\left(n,\alpha,p,\Omega\right)$. By Lemma \ref{lem:basic-estimates}, we have
\[\Div\left(h\boldb\right)\in\Bessel{\alpha-2}p(\Omega)\]
and so
\[ \tilde{f}:=f+\triangle h-\Div\left(h\boldb\right)\in\Bessel{\alpha-2}p(\Omega).\]
Note also that
\begin{align*}
\norm{\tilde{f}}{\Bessel{\alpha-2}{p}(\Omega)} &\leq C\left(\norm{f}{\Bessel{\alpha-2}{p}(\Omega)} + \norm{h}{\Bessel{\alpha}{p}(\Omega)}\right) \\
&\leq C\left(\norm{f}{\Bessel{\alpha-2}{p}(\Omega)} + \norm{u_{\Di}}{\bes{\alpha-1/p}{p}(\Omega)}\right)
\end{align*}
for some constant $C=C(n,\alpha,p,\boldb,\Omega)>0$. Hence the problem \eqref{eq:D-1} is reduced to the following   problem:
\begin{equation}\label{eq:homogeneous-problem}
\left\{\begin{alignedat}{2}
-\triangle w+\Div\left(w\boldb\right) & =\tilde{f} & \quad & \text{in }\Omega,\\
w & =0 & \quad & \text{on }\partial\Omega.\nonumber
\end{alignedat}\right.
\end{equation}
One can do a similar reduction to the problem  \eqref{eq:D-2}  into  the problem with trivial boundary data.

Hence from now on, we focus on the solvability for the problems   \eqref{eq:D-1} and \eqref{eq:D-2} with trivial boundary data, that is,  $u_{\Di}=v_{\Di}=0$.

First of all, it follows from Theorems \ref{thm:Dirichlet-Poisson} and \ref{thm:Dirichlet-Poisson-2}  that for each $(\alpha,p)\in \mathscr{A}$,   the operator $\mathcal{L}_{\alpha,p}^{0,D}: \oBessel{\alpha}{p}(\Omega) \rightarrow \Bessel{\alpha-2}{p}(\Omega)$ defined by
\[   \mathcal{L}_{\alpha,p}^{0,D} u = -\triangle u \]is bijective. Let $(\alpha,p)\in \mathscr{A}\cap\mathscr{B}$ be fixed. Recall from  Lemma \ref{lem:compactness-of-operators} that
\[ \mathcal{B}_{\alpha,p}^{D}: \oBessel{\alpha}{p}(\Omega)\rightarrow \Bessel{\alpha-2}{p}(\Omega) \quad\text{and}\quad\mathcal{B}^{*}_{2-\alpha,p'}:\oBessel{2-\alpha}{p'}(\Omega)\rightarrow \Bessel{-\alpha}{p'}(\Omega) \]
 are compact linear operators.  Define
\[   \mathcal{L}_{\alpha,p}^{D} = \mathcal{L}_{\alpha,p}^{0,D}  +\mathcal{B}_{\alpha,p}^{D}  \quad \text{and }\quad  \mathcal{L}_{2-\alpha,p'}^{*,D} = \mathcal{L}_{2-\alpha,p'}^{0,D}  +\mathcal{B}_{2-\alpha,p'}^*. \]
Then $\mathcal{L}_{\alpha,p}^{D}$ is a bounded linear operator from $\oBessel{\alpha}{p}(\Omega)$ to $\Bessel{\alpha-2}{p}(\Omega)$ and $\mathcal{L}_{2-\alpha,p'}^{*,D}$ is a bounded linear operator from $\oBessel{2-\alpha}{p'}(\Omega)$ to $\Bessel{-\alpha}{p'}(\Omega)$.   Since $\mathcal{L}_{\alpha,p}^{0,D}$ and $\mathcal{L}_{2-\alpha,p'}^{0,D}$ are bijective,  we have
\[  \mathcal{L}_{\alpha,p}^{D} = \mathcal{L}_{\alpha,p}^{0,D} \circ \left[I +  \left(\mathcal{L}_{\alpha,p}^{0,D}\right)^{-1} \circ \mathcal{B}_{\alpha,p}^{D}\right] \]
and
\[  \mathcal{L}_{2-\alpha,p'}^{*,D} = \left[I +   \mathcal{B}_{2-\alpha,p'}^* \circ \left( \mathcal{L}_{2-\alpha,p'}^{0,D}\right)^{-1} \right] \circ \mathcal{L}_{2-\alpha,p'}^{0,D}. \]
Here $I$ denotes the identity operator on a Banach space. Hence
\begin{equation}\label{eq:Dirichlet-kernel-1}
\ker\mathcal{L}_{\alpha,p}^D = \ker \left(I +  \left(\mathcal{L}_{\alpha,p}^{0,D}\right)^{-1} \circ \mathcal{B}_{\alpha,p}^{D}\right)
\end{equation}
and
\begin{equation}\label{eq:Dirichlet-kernel-2}
\ker\mathcal{L}_{2-\alpha,p'}^{*,D} = \left(\mathcal{L}_{2-\alpha,p'}^{0,D}\right)^{-1}\left[\ker \left(I +   \mathcal{B}_{2-\alpha,p'}^* \circ \left( \mathcal{L}_{2-\alpha,p'}^{0,D}\right)^{-1}\right)\right].
\end{equation}
On the other hand, since $\mathcal{B}_{\alpha,p}^{D}$ and $\mathcal{B}_{2-\alpha,p'}^{*}$ are compact, it follows from the  Riesz-Schauder theory (see e.g.\ \cite[Theorem 6.6]{MR2759829}) that  the operator
\[  I +  \left(\mathcal{L}_{\alpha,p}^{0,D}\right)^{-1} \circ \mathcal{B}_{\alpha,p}^{D} \]
is injective if and only if it is surjective, and
\begin{equation}\label{eq:Dirichlet-duality}
\dim \ker \left(I +  \left(\mathcal{L}_{\alpha,p}^{0,D}\right)^{-1} \circ \mathcal{B}_{\alpha,p}^{D}\right) = \dim \ker \left(I +  \left[\left(\mathcal{L}_{\alpha,p}^{0,D}\right)^{-1} \circ \mathcal{B}_{\alpha,p}^{D}\right]^{\prime} \right)<\infty,
\end{equation}
where $\left[\left(\mathcal{L}_{\alpha,p}^{0,D}\right)^{-1} \circ \mathcal{B}_{\alpha,p}^{D}\right]^{\prime}$ denotes the adjoint operator of $\left(\mathcal{L}_{\alpha,p}^{0,D}\right)^{-1} \circ \mathcal{B}_{\alpha,p}^{D}$.  By \eqref{eq:duality-B-2}, we  easily get
\begin{equation}\label{eq:adjoint-characterization}
     \left[\left(\mathcal{L}_{\alpha,p}^{0,D}\right)^{-1} \circ \mathcal{B}_{\alpha,p}^{D}\right]^{\prime} = \mathcal{B}_{2-\alpha,p'}^* \circ \left(\mathcal{L}_{2-\alpha,p'}^{0,D}\right)^{-1}.
\end{equation}
Therefore, it follows from \eqref{eq:Dirichlet-kernel-1}, \eqref{eq:Dirichlet-kernel-2}, \eqref{eq:Dirichlet-duality}, and \eqref{eq:adjoint-characterization} that
\begin{equation}\label{eq:kernel-dimension-Dirichlet}
  \dim \ker \mathcal{L}_{\alpha,p}^{D} = \dim \ker \mathcal{L}_{2-\alpha,p'}^{*,D}<\infty.
\end{equation}
We shall show that the kernels of $\mathcal{L}_{\alpha,p}^{D}$ and $\mathcal{L}_{2-\alpha,p'}^{*,D}$ are trivial. We first consider  the special case when $(1,p)\in \mathscr{A}\cap \mathscr{B}$.
\begin{lem}\label{lem:L-1p-result}  Let $(1,p)\in \mathscr{A}\cap \mathscr{B}$. Then
\begin{equation}\label{eq:D-kernel}
\ker \mathcal{L}_{1,p}^{D} = \ker \mathcal{L}_{1,p'}^{*,D} = \{ 0\}. 
\end{equation} 
\end{lem}
\begin{proof} 
It was shown by Trudinger \cite{T73}, Droniou \cite{MR1908676}  for $p=2<n$ and by Kwon \cite[Theorem 4.1]{K21} for  $p<2=n$  that 
\begin{equation}\label{eq:kernel-1}
   \ker \mathcal{L}_{1,p'}^{*,D}=\{0\}.
\end{equation}
For the case  $n=2$, \eqref{eq:D-kernel} follows from \eqref{eq:kernel-dimension-Dirichlet} and \eqref{eq:kernel-1}. If  $n\geq 3$, then \eqref{eq:D-kernel} also follows from  \eqref{eq:kernel-dimension-Dirichlet} and \eqref{eq:kernel-1} since $\Bessel{1}{q}(\Omega)\hookrightarrow \Bessel{1}{2}(\Omega)$ for $q>2$.  
\end{proof}

For general $(\alpha,p)\in \mathscr{A}\cap\mathscr{B}$, we use the following lemma.

\begin{lem}\label{lem:Sobolev-regularity}  Let   $(\alpha,p) , (\beta,q)\in \mathscr{A}\cap\mathscr{B}$ satisfy
\begin{equation}\label{eq:Sobolev-regularity-condition}
\alpha\leq\beta\quad\text{and}\quad\frac{1}{q}-\frac{\beta}{n}= \frac{1}{p}-\frac{\alpha}{n}.
\end{equation}
Then
\[   \ker \mathcal{L}_{\alpha,p}^{D} \subset \ker \mathcal{L}_{\beta,q}^{D}. \]
\end{lem}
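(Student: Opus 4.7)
The plan is to show that any $u\in\ker\mathcal{L}_{\alpha,p}^{D}$ automatically sits in the smaller space $\oBessel{\beta}{q}(\Omega)$, and satisfies the same equation. The proof will proceed in three main steps: lift the regularity of the drift term using the bilinear estimate, solve a Poisson Dirichlet problem in the target scale, and then invoke uniqueness in the original scale to identify $u$ with the new solution.

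First, I would fix $u\in\oBessel{\alpha}{p}(\Omega)$ with $-\triangle u+\Div(u\boldb)=0$ in $\Omega$, and verify that the quadruple $((\alpha,p),(\beta,q))$ fits the hypotheses of Lemma \ref{lem:basic-estimates} (\ref{enu:basic-estimates-i}). The bounds $0\le\alpha\le\beta$ and $1/q-\beta/n=1/p-\alpha/n$ are assumed; together they force $q\le p$. Membership in $\mathscr{B}$ for both pairs gives $\beta\le 1$ and $\alpha/n<1/p<(\alpha+n-1)/n$. With these in place, Lemma \ref{lem:basic-estimates} (\ref{enu:basic-estimates-i}), combined with its density-based extension as in the remark, yields
\[
|\action{u\boldb,\Phi}|\le C\norm{\boldb}{\Leb{n}(\Omega)}\norm{u}{\Bessel{\alpha}{p}(\Omega)}\norm{\Phi}{\Bessel{1-\beta}{q'}(\Omega)}
\]
for every $\Phi\in\Bessel{1-\beta}{q'}(\Omega)^n$. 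Taking $\Phi=\nabla\phi$ for $\phi\in\Bessel{2-\beta}{q'}(\Omega)$ and using the boundedness of $\nabla:\Bessel{2-\beta}{q'}(\Omega)\to\Bessel{1-\beta}{q'}(\Omega)^n$ from \eqref{eq:gradient-bounded}, this delivers
\[
\Div(u\boldb)\in \oBessel{\beta-2}{q}(\Omega)\hookrightarrow \Bessel{\beta-2}{q}(\Omega).
\]

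Second, since $(\beta,q)\in\mathscr{A}$ and the right-hand side $-\Div(u\boldb)$ now lies in $\Bessel{\beta-2}{q}(\Omega)$, Theorem \ref{thm:Dirichlet-Poisson} supplies a unique $w\in\oBessel{\beta}{q}(\Omega)$ with
\[
-\triangle w=-\Div(u\boldb)\ \text{in }\Omega,\qquad w=0\ \text{on }\partial\Omega.
\]

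Third, I would descend from $\oBessel{\beta}{q}(\Omega)$ back to $\oBessel{\alpha}{p}(\Omega)$. Theorem \ref{thm:Sobolev-embedding} (i), applied with the roles of $(\alpha,p)$ and $(\beta,q)$ swapped (the required inequality $1/q-\beta/n\le 1/p-\alpha/n$ holds with equality by hypothesis), gives $\Bessel{\beta}{q}(\Omega)\hookrightarrow\Bessel{\alpha}{p}(\Omega)$; the same embedding holds on $\mathbb{R}^n$, and support in $\overline{\Omega}$ is preserved, so $w\in\oBessel{\alpha}{p}(\Omega)$. Both $u$ and $w$ now belong to $\oBessel{\alpha}{p}(\Omega)$ and solve the Dirichlet Poisson problem with the same right-hand side $-\Div(u\boldb)\in\Bessel{\alpha-2}{p}(\Omega)$, so the uniqueness in Theorem \ref{thm:Dirichlet-Poisson} for $(\alpha,p)\in\mathscr{A}$ forces $u=w\in\oBessel{\beta}{q}(\Omega)$. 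Since $-\triangle u+\Div(u\boldb)=0$ holds as distributions on $\Omega$ and $C_{0}^{\infty}(\Omega)$ is dense in $\oBessel{2-\beta}{q'}(\Omega)$, this identity persists as an equality in $\Bessel{\beta-2}{q}(\Omega)$, i.e.\ $\mathcal{L}_{\beta,q}^{D}u=0$.

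I expect the only delicate point to be the bookkeeping in Step 1: checking that \emph{both} endpoint pairs satisfy the hypotheses of Lemma \ref{lem:basic-estimates} (\ref{enu:basic-estimates-i}) simultaneously and that the resulting distribution $\Div(u\boldb)$ lands in the correct space $\Bessel{\beta-2}{q}(\Omega)$ (rather than merely in a weaker negative space that would not feed Theorem \ref{thm:Dirichlet-Poisson}). Once that is clean, Steps 2 and 3 are direct applications of the cited existence and embedding results.
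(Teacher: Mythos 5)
Your proof is correct and follows essentially the same route as the paper's: both use Lemma \ref{lem:basic-estimates} to place $\Div(u\boldb)$ in $\Bessel{\beta-2}{q}(\Omega)$, solve the Poisson Dirichlet problem in the $(\beta,q)$ scale via Theorem \ref{thm:Dirichlet-Poisson}, and then invoke uniqueness in the $(\alpha,p)$ scale (the paper phrases this as the harmonic difference $w=u-v$ with zero trace vanishing, while you compare $u$ and $w$ directly as solutions of the same Poisson problem, which is the same argument).
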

\begin{proof}
Let $u \in \ker \mathcal{L}_{\alpha,p}^D$. Then by Lemma \ref{lem:basic-estimates}, $\Div(u\boldb) \in \Bessel{\beta-2}{q}(\Omega)$. So by Theorems \ref{thm:Dirichlet-Poisson} and \ref{thm:Dirichlet-Poisson-2}, there exists a unique $v\in \oBessel{\beta}{q}(\Omega)$ such that $\triangle v =\Div(u\boldb)$ in $\Omega$. Set $w=u-v$. Then $\triangle w =0$ in $\Omega$ and $w\in \Bessel{\alpha}{p}(\Omega)+\Bessel{\beta}{q}(\Omega)$. Since $(\alpha,p)$ and $(\beta,q)$ satisfy \eqref{eq:Sobolev-regularity-condition}, it follows from Theorem \ref{thm:Sobolev-embedding} that    $\Bessel{\alpha}{p}(\Omega)+\Bessel{\beta}{q}(\Omega) = \Bessel{\alpha}{p}(\Omega)$.  Since $(\alpha,p)\in \mathscr{A}$ and $\Tr w=0$ on $\partial\Omega$, it follows from Theorems \ref{thm:Dirichlet-Poisson} and \ref{thm:Dirichlet-Poisson-2} that $w=0$ in $\Omega$ and so $u=v\in \oBessel{\beta}{q}(\Omega)$. Hence $u\in \ker \mathcal{L}_{\beta,q}^{D}$. This completes the proof of Lemma \ref{lem:Sobolev-regularity}.
\end{proof}
\begin{prop}
\label{prop:uniqueness-of-D} Let $\left(\alpha,p\right)\in{\mathscr{A}}\cap{\mathscr{B}}$. Then
\[   \ker \mathcal{L}_{\alpha,p}^{D} =\ker \mathcal{L}_{2-\alpha,p'}^{*,D} = \{0 \}. \]
\end{prop}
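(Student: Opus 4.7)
The plan is a two-step reduction: first embed $\ker \mathcal{L}_{\alpha,p}^{D}$ into a ``better'' kernel $\ker \mathcal{L}_{1,q}^{D}$ via Sobolev regularity, and then invoke Lemma~\ref{lem:L-1p-result} to conclude the latter is trivial. Triviality of the dual kernel $\ker \mathcal{L}_{2-\alpha,p'}^{*,D}$ will then follow automatically from the dimensional identity \eqref{eq:kernel-dimension-Dirichlet}, so I do not need to treat it by a separate argument.

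For the reduction, note that the definition of $\mathscr{B}$ forces $\alpha \le 1$. Given $(\alpha,p) \in \mathscr{A} \cap \mathscr{B}$, I would define $q$ by the Sobolev relation
\[
\frac{1}{q} = \frac{1}{p} + \frac{1-\alpha}{n},
\]
so that $(1,q)$ lies on the same ``Sobolev line'' $\frac{1}{q} - \frac{\beta}{n} = \frac{1}{p} - \frac{\alpha}{n}$ as $(\alpha,p)$, with $\alpha \le 1$. Membership of $(1,q)$ in $\mathscr{B}$ is immediate: both inequalities defining $\mathscr{B}$ depend only on the quantity $\frac{1}{p}-\frac{\alpha}{n}$, which is preserved along the Sobolev line. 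Membership of $(1,q)$ in $\mathscr{A}$ is checked by a direct case-by-case verification using the three subcases (i)--(iii) in the definition of $\mathscr{A}$; geometrically, this reflects the fact that the Jerison--Kenig parallelogram $\tilde{\mathscr{A}}$ (see Figure~\ref{fig:admissible}) is stable under the ``Sobolev slide'' to $\beta = 1$ as long as we start in $\tilde{\mathscr{A}} \cap \tilde{\mathscr{B}}$.

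Granting this, Lemma~\ref{lem:Sobolev-regularity} (applied with $\beta = 1$) yields
\[
\ker \mathcal{L}_{\alpha,p}^{D} \subset \ker \mathcal{L}_{1,q}^{D},
\]
and Lemma~\ref{lem:L-1p-result} gives $\ker \mathcal{L}_{1,q}^{D} = \{0\}$, whence $\ker \mathcal{L}_{\alpha,p}^{D} = \{0\}$. The identity \eqref{eq:kernel-dimension-Dirichlet} then forces $\dim \ker \mathcal{L}_{2-\alpha,p'}^{*,D} = 0$, completing the proof.

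The main obstacle is the $\mathscr{A}$-membership of $(1,q)$ in the single-shot reduction: for some boundary configurations of $(\alpha,p)$, the one-step jump to $\beta = 1$ might exit $\mathscr{A}$. The fallback is to walk along the Sobolev line from $(\alpha, 1/p)$ to $(1, 1/q)$ through a finite chain of intermediate pairs $(\beta_j, q_j) \in \mathscr{A}\cap\mathscr{B}$ with $\beta_j$ strictly increasing, applying Lemma~\ref{lem:Sobolev-regularity} once per link. This is feasible because the entire Sobolev segment lies in $\mathscr{B}$ (as shown above) and can be covered by the union of the three pieces of $\mathscr{A}$, whose overlaps near the line $\alpha = 1$ leave room to stitch the intermediate pairs together.
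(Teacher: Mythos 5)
Your overall skeleton matches the paper's: reduce to showing $\ker \mathcal{L}_{\alpha,p}^{D}\subset\ker\mathcal{L}_{1,q}^{D}$ for some $(1,q)\in\mathscr{A}\cap\mathscr{B}$, invoke Lemma \ref{lem:L-1p-result}, and obtain triviality of $\ker\mathcal{L}_{2-\alpha,p'}^{*,D}$ for free from \eqref{eq:kernel-dimension-Dirichlet}. Your observations that $\mathscr{B}$ forces $\alpha\le1$ and that $\mathscr{B}$-membership is preserved along the Sobolev line are also correct. The gap is in the reduction itself: the claim that $\tilde{\mathscr{A}}$ is ``stable under the Sobolev slide to $\beta=1$'' is false when $n\ge4$ and $\varepsilon<1$. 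Concretely, take $n=4$, $\varepsilon=1/10$, $\alpha=3/10$, $1/p=1/10$. Then $(\alpha,p)\in\mathscr{A}\cap\mathscr{B}$ (via condition (iii) of Theorem \ref{thm:Dirichlet-Poisson}), but the slide gives $1/q=1/10+(7/10)/4=11/40$, and $(1,q)$ satisfies none of (i)--(iii): condition (iii) would require $1<3/q+\varepsilon=37/40$, while (i) and (ii) need $2/q\ge 9/10$. The geometric reason is that the lower boundary $1/q=(\beta-\varepsilon)/3$ of $\tilde{\mathscr{A}}$ has slope $1/3$, which exceeds the slope $1/n$ of the Sobolev line once $n\ge4$, so the line exits $\tilde{\mathscr{A}}$ before reaching $\beta=1$ whenever the starting point is close enough to that boundary.

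Your proposed fallback does not repair this, because you chain the intermediate pairs along the \emph{same} Sobolev line: the terminal point $(1,1/q)$ is unchanged, and in the example above the entire portion of that line with $\beta\ge7/10$ lies outside $\tilde{\mathscr{A}}$, so no finite chain of pairs in $\mathscr{A}\cap\mathscr{B}$ on that line can reach $\beta=1$. The missing ingredient --- which is the actual content of the paper's proof for $n\ge4$, $0<\varepsilon<1$ --- is a device for switching to a \emph{higher} Sobolev line: at fixed smoothness $\beta_2$ one uses the trivial inclusion $\Bessel{\beta_2}{q_1}(\Omega)\subset\Bessel{\beta_2}{q_2}(\Omega)$ for $q_2<q_1$, hence $\ker\mathcal{L}_{\beta_2,q_1}^{D}\subset\ker\mathcal{L}_{\beta_2,q_2}^{D}$, to increase $1/q$ and regain clearance from the slope-$1/3$ boundary, then slides along the new line via Lemma \ref{lem:Sobolev-regularity}, and alternates. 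The paper's auxiliary sequence $\{\alpha_j\}$ is constructed precisely to show that this alternation terminates after finitely many steps. Without this vertical jump (or an equivalent mechanism), your argument covers only the cases $n=3$ or $\varepsilon=1$, where the single-shot slide does succeed.
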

\begin{proof}
By Lemma \ref{lem:L-1p-result} and \eqref{eq:kernel-dimension-Dirichlet}, it suffices to show that there exists $q$  such that
$$
\left(1,q\right)\in{\mathscr{A}}\cap{\mathscr{B}} \quad\mbox{and}\quad \ker \mathcal{L}_{\alpha,p}^{D}\subset \ker \mathcal{L}_{1,q}^{D}.
$$ 
To show this, we will find finitely many points $(\beta_1 , q_1 )$, $(\beta_2 , q_2 )$, ..., $(\beta_N , q_N )$ in ${\mathscr{A}}\cap{\mathscr{B}}$ such that  $(\beta_1 , q_1 )= (\alpha , p)$, $\beta_1 \le \beta_2 \cdots \le \beta_N =1$,  and $1/q_{j+1}-\beta_{j+1}/n \le 1/q_j - \beta_j /n$ for $1\le j < N$. In the $\beta q^{-1}$-plane, the point  $(\beta_{j+1}, 1/q_{j})$ will be obtained by moving from $(\beta_j , 1/q_j )$   along the line with slope $1/n$ to the right or   vertically upward. Then by Lemma \ref{lem:Sobolev-regularity}, we may take $q= q_N$.

First, if $n=2,3$ or if $\varepsilon=1$,   we define $q$ by
\[
\frac{1}{q}-\frac{1}{n}=\frac{1}{p}-\frac{\alpha}{n}.
\]
Then it is easy to check that $\left(1,q\right)\in{\mathscr{A}}\cap{\mathscr{B}}$. Thus it follows from Lemma \ref{lem:Sobolev-regularity} that $\ker \mathcal{L}_{\alpha,p}^{D}\subset \ker \mathcal{L}_{1,q}^{D}$.

Suppose next that $n\ge4$ and $0< \varepsilon<1$. Let $\left\{ \alpha_{j}\right\} $ be a sequence defined inductively by
\[
\alpha_{0}=0,\quad\alpha_{1}=\varepsilon,\quad\text{and}\quad\frac{\alpha_{j+1}-\alpha_{j}}{n}+\alpha_{j}=\frac{\alpha_{j+1}-\varepsilon}{3}\,\,\left(j\geq1\right).
\]
Note that $\alpha_{j+1}$$\left(j\geq1\right)$ is the $\beta$-coordinate of the intersection point of the two straight lines
\[
\frac{1}{q}=\frac{\beta-\alpha_{j}}{n}+\alpha_{j}\quad\text{and}\quad\frac{1}{q}=\frac{\beta-\varepsilon}{3}
\]
in the $\beta q^{-1}$-plane. On the other hand, since
\[
\alpha_{j+1}=\frac{3n-3}{n-3}\alpha_{j}+\frac{n\varepsilon}{n-3}\,\,\left(j\geq1\right),
\]
the sequence $\left\{ \alpha_{j}\right\} $ diverges to $\infty$ as $j \to \infty$.
Hence there exists  $k\geq1$ such that $\alpha_{k}<(1+\varepsilon)/2\leq\alpha_{k+1}$.
Let us redefine
\[
\alpha_{k+1}=\frac{1+\varepsilon}{2}\quad\text{and}\quad\alpha_{k+2}=1 ;
\]
see Figure \ref{fig:iteration-lemma} with $k=3$.

\begin{figure}[h]
\begin{centering}
\includegraphics[width=0.75\textwidth]{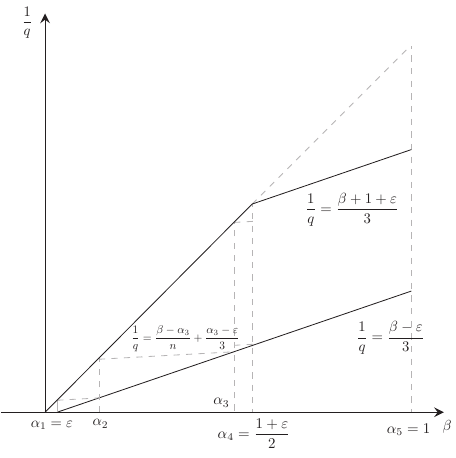}
\par\end{centering}
\caption{\label{fig:iteration-lemma}}
\end{figure}

We claim that if $\alpha_{j-1}\le\alpha<\alpha_{j}$ $\left(1\le j\le k+2\right)$,
then there exists $q$ such that
\[
\left(\alpha_{j},q\right)\in{\mathscr{A}}\cap{\mathscr{B}}\quad\text{and}\quad \ker \mathcal{L}_{\alpha,p}^{D} \subset \ker \mathcal{L}_{{\alpha_{j}},q}^{D}.
\]
If this claim is true, then applying the claim repeatedly, we can show that  there exists
$q$ such that $\left(1,q\right)\in{\mathscr{A}}\cap{\mathscr{B}}$ and $\ker \mathcal{L}_{\alpha,p}^{D} \subset \ker \mathcal{L}_{1,q}^{D}$, which completes the proof.
The proof of the claim consists of two steps.\medskip

\emph{Step 1. }Assume that
\begin{equation}
j=1\quad\text{or}\quad\frac{1}{p}>\frac{\alpha-\alpha_{j}}{n}+\frac{\alpha_{j}-\varepsilon}{3}\,\,\left(2\le j\le k+2\right).\label{eq:step-1-assumption}
\end{equation}
Define $q$ by
\[
\frac{1}{q}-\frac{\alpha_{j}}{n}=\frac{1}{p}-\frac{\alpha}{n}.
\]
Since $\left(\alpha,p\right)\in{\mathscr{A}}\cap{\mathscr{B}}$,
it is easy to check that $\left(\alpha_{j},q\right)\in{\mathscr{A}}\cap{\mathscr{B}}$. Hence  it follows from Lemma \ref{lem:Sobolev-regularity} that $\ker \mathcal{L}_{\alpha,p}^{D}\subset \ker \mathcal{L}_{\alpha_{j},q}^{D}$. \medskip

\emph{Step 2.} Assume that
\[
2\le j\le k+2  \quad\mbox{and}\quad \frac{1}{p}\le\frac{\alpha-\alpha_{j}}{n}+\frac{\alpha_{j}-\varepsilon}{3}.
\]
Let $\beta_{1}$ be the $\beta$-coordinate of the intersection point of the two straight lines
\[
\frac{1}{q}=\frac{\beta-\alpha}{n}+\frac{1}{p}\quad\text{and}\quad\frac{1}{q}=\frac{\beta-\varepsilon}{3}
\]
in the $\beta q^{-1}$-plane; see Figure \ref{fig:iteration-lemma-2}.
\begin{figure}
\begin{centering}
\includegraphics[width=0.75\textwidth]{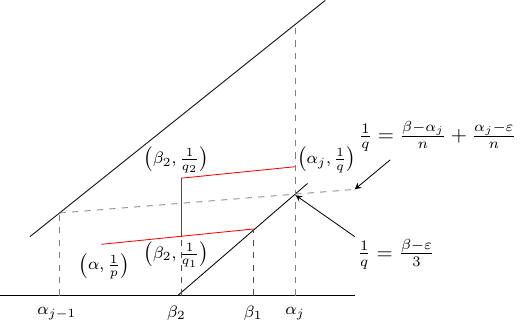}
\par\end{centering}
\caption{\label{fig:iteration-lemma-2}}
\end{figure}
Then $\alpha_{j-1} \le \alpha <\beta_{1}\leq\alpha_{j}$. Define $\beta_{2}$
and $q_{1}$ by
\[
\beta_{2}=\frac{\alpha+\beta_{1}}{2}\quad\text{and}\quad\frac{1}{q_1}-\frac{\beta_{2}}{n}=\frac{1}{p}-\frac{\alpha}{n}.
\]
Then $\alpha<\beta_{2}<\alpha_{j}$ and $\left(\beta_{2},q_{1}\right)\in{\mathscr{A}}\cap{\mathscr{B}}.$
So by Lemma \ref{lem:Sobolev-regularity},  $\ker \mathcal{L}_{\alpha,p}^{D}\subset \ker \mathcal{L}_{\beta_2,q_1}^{D}$.
Choose any $q_{2}$ satisfying
\begin{equation}\label{eq:Step-2-condition}
\frac{\beta_{2}-\alpha_{j}}{n}+\frac{\alpha_{j}-\varepsilon}{3}<\frac{1}{q_{2}}<\min\left\{ \beta_{2},\frac{\beta_{2}+1+\varepsilon}{3}\right\} .
\end{equation}
It is easy to check that $1<q_{2}<q_{1}$ and $\left(\beta_{2},q_{2}\right)\in{\mathscr{A}}\cap{\mathscr{B}}$.
Since $\Bessel{\beta_2}{q_1}(\Omega)\subset \Bessel{\beta_2}{q_2}(\Omega)$, we have $\ker \mathcal{L}_{\beta_2,q_1}^{D}\subset \ker \mathcal{L}_{\beta_2,q_2}^{D}$.  Now, since
$(\beta_2,q_2)$ satisfies \eqref{eq:Step-2-condition}, it follows from  Step 1 that there exists $q$ such that
\[
\left(\alpha_{j},q\right)\in{\mathscr{A}}\cap{\mathscr{B}}\quad\text{and}\quad \ker \mathcal{L}_{\beta_2,q_2}^{D}\subset \ker \mathcal{L}_{\alpha_{j},q}^{D},
\]
which proves the claim.    This completes the proof of Proposition \ref{prop:uniqueness-of-D}.
\end{proof}

By the Riesz-Schauder theory,  Theorem \ref{thm:Dirichlet-problems} is a direct consequence of Proposition \ref{prop:uniqueness-of-D}, but we give a proof for the sake of the completeness.

\begin{proof}[Proof of Theorem \ref{thm:Dirichlet-problems}] It suffices to prove Theorem \ref{thm:Dirichlet-problems} when $u_{\Di}=v_{\Di}=0$. Due to the Riesz-Schauder theory, we already observed that the operator $I +  \left(\mathcal{L}_{\alpha,p}^{0,D}\right)^{-1} \circ \mathcal{B}_{\alpha,p}^{D}$ is injective if and only if it is surjective. By \eqref{eq:Dirichlet-kernel-1} and Proposition \ref{prop:uniqueness-of-D}, we have
\[\ker\left(I +  \left(\mathcal{L}_{\alpha,p}^{0,D}\right)^{-1} \circ \mathcal{B}_{\alpha,p}^{D}\right) =   \ker \mathcal{L}_{\alpha,p}^{D} =  \{0\}.\]
Hence $I +  \left(\mathcal{L}_{\alpha,p}^{0,D}\right)^{-1} \circ \mathcal{B}_{\alpha,p}^{D}$ is bijective. As $\mathcal{L}_{\alpha,p}^{0,D}$ is bijective, we conclude that
\[  \mathcal{L}_{\alpha,p}^{D} = \mathcal{L}_{\alpha,p}^{0,D}+\mathcal{B}_{\alpha,p}^{D} \]
is bijective; that is, given $f\in \Bessel{\alpha-2}{p}(\Omega)$, there exists a unique $u\in \oBessel{\alpha}{p}(\Omega)$ such that $\mathcal{L}_{\alpha,p}^{D}u=f$.  Moreover, it follows from the open mapping theorem that there exists a constant $C=C(n,\alpha,p,\boldb,\Omega)$ such that
\[  \norm{u}{\Bessel{\alpha}{p}(\Omega)} \leq C \norm{f}{\Bessel{\alpha-2}{p}(\Omega)}.  \]
This completes the proof of (i). Following exactly the same argument, we can also prove Theorem \ref{thm:Dirichlet-problems} (ii) of which proof is omitted
\end{proof}

\subsection{Proofs of  Lemma \ref{lem:sumspace-decomposition} and Theorem \ref{thm:L2-boundary}} \label{subsec:L2-boundary}

The existence assertion of Theorem \ref{thm:L2-boundary} was already shown in Section \ref{sec:main-results}. It remains to prove the uniqueness  and regularity assertions of Theorem \ref{thm:L2-boundary}. To do this, we need to prove Lemma \ref{lem:sumspace-decomposition}.  For the case when $\alpha>1/2$, the proofs of Lemma \ref{lem:sumspace-decomposition} and the uniqueness assertion will be based on the following embedding result. 

 \begin{lem}\label{lem:embedding-for-uniqueness}  Let $(\alpha,p)\in \mathscr{A}$. If $1/2 < \alpha\leq  1$, then there is $q$ with $(\alpha,q)\in \mathscr{A}$ such that
\[   \Bessel{1}{\frac{2n}{n+1}}(\Omega)+\Bessel{\alpha}{p}(\Omega)  \hookrightarrow \Bessel{\alpha}{q}(\Omega). \]
In addition, if $(\alpha,p)\in \mathscr{A}\cap\mathscr{B}$, then $q$ can be chosen so that $(\alpha,q)\in \mathscr{A}\cap\mathscr{B}$.
\end{lem}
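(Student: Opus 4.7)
The plan is to find $q$ such that $q \leq p$ (which gives $\Bessel{\alpha}{p}(\Omega) \hookrightarrow \Bessel{\alpha}{q}(\Omega)$ trivially) and $\frac{1}{q} - \frac{\alpha}{n} \geq \frac{n+1}{2n} - \frac{1}{n} = \frac{n-1}{2n}$ (which gives $\Bessel{1}{\frac{2n}{n+1}}(\Omega) \hookrightarrow \Bessel{\alpha}{q}(\Omega)$ via Theorem \ref{thm:Sobolev-embedding}(i)); the second condition rearranges to $\frac{1}{q} \geq \frac{n-1+2\alpha}{2n}$. The natural candidate is
\[
q := \min\left(p,\; \frac{2n}{n-1+2\alpha}\right) = \min(p, q_0), \qquad q_0 := \frac{2n}{n-1+2\alpha}.
\]
Note that $\alpha > 1/2$ and $n \geq 3$ force $1 < q_0 < 2$, so $q$ is well-defined and both embeddings hold by Theorem \ref{thm:Sobolev-embedding}(i). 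It then suffices to show $(\alpha,q) \in \mathscr{A}$, and also in $\mathscr{B}$ whenever $(\alpha,p) \in \mathscr{B}$.

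If $q = p$ there is nothing to verify, so the entire argument reduces to the case $q = q_0 < p$. The membership $(\alpha, q_0) \in \mathscr{B}$ is routine: with $1/q_0 = (n-1+2\alpha)/(2n)$, both inequalities $\alpha/n < 1/q_0 < (\alpha+n-1)/n$ reduce to $n > 1$. The key obstacle is verifying $(\alpha,q_0) \in \mathscr{A}$, i.e., checking the Jerison--Kenig conditions of Theorem \ref{thm:Dirichlet-Poisson} at the point on the ``trace line'' $\{1/q - \alpha/n = (n-1)/(2n)\}$.

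The basic strip condition $1/q_0 < \alpha < 1 + 1/q_0$ follows from $\alpha > 1/2$ (which gives $n - 1 + 2\alpha < 2n\alpha$) and $\alpha \leq 1 < 1 + 1/q_0$. For the three conditions (i)--(iii), compute $2/q_0 = 1 + (2\alpha-1)/n$, which lies in $(1, 1 + 1/n]$. I would split into two cases according to where this value sits relative to $1 \pm \varepsilon$: if $2/q_0 \leq 1 + \varepsilon$ then condition (i) applies directly; otherwise $2/q_0 > 1 + \varepsilon$ and I would verify condition (ii), which amounts to checking $3/q_0 - 1 - \varepsilon < \alpha$. A direct computation shows this reduces to $(n-3)(1 - 2\alpha) < 2n\varepsilon$, whose left-hand side is non-positive for $n \geq 3$, $\alpha > 1/2$.

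The main obstacle I anticipate is purely notational: handling the case split (i) vs.\ (ii) cleanly, and keeping track of the inequalities between $1/q_0$, $(1 \pm \varepsilon)/2$, and $\alpha$. No deeper ideas are needed beyond the observation that the Sobolev line $1/q = (n-1+2\alpha)/(2n)$ lies strictly inside the ``strip'' $\{1/q < \alpha\}$ precisely when $\alpha > 1/2$, which is exactly the hypothesis.
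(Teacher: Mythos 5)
Your proposal is correct and follows essentially the same route as the paper: the same choice $q_0$ with $1/q_0=\frac{\alpha}{n}+\frac{n-1}{2n}$, the same $q=\min(p,q_0)$, and the same key inequality (your $(n-3)(1-2\alpha)<2n\varepsilon$ is exactly the paper's verification that $1/q_0<\frac{\alpha+1+\varepsilon}{3}$); you merely check membership in $\mathscr{A}$ via a case split on conditions (i)/(ii) of Theorem \ref{thm:Dirichlet-Poisson} where the paper checks the equivalent hexagon inequalities directly.
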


\begin{proof}
Observe that $\left(1, 2n/(n+1)\right)\in{\mathscr{A}}\cap{\mathscr{B}}$.   By Theorem \ref{thm:Sobolev-embedding},  we have
\[       \Bessel{1}{\frac{2n}{n+1}}(\Omega) + \Bessel{\alpha}{p}(\Omega) \hookrightarrow \Bessel{\alpha}{q_0}(\Omega) +\Bessel{\alpha}{p}(\Omega)  = \Bessel{\alpha}{q}(\Omega),   \]
where
\[    \frac{1}{q_0} -\frac{\alpha}{n} = \frac{n+1}{2n}-\frac{1}{n}\quad \text{and}\quad q=\min(p,q_0). \]

If $p\le q_{0}$, then  $(\alpha,q)=(\alpha,p)\in \mathscr{A}$. Suppose  that $n\geq 3$ and $q=q_{0}<p$. Then since
\[    \alpha>\frac{1}{2},\quad (\alpha,p)\in \mathscr{A},\quad \text{and}\quad \frac{n+1}{2n}<\frac{2+\varepsilon}{3}, \]
we have
\[      \frac{\alpha-\varepsilon}{3}<\frac{1}{p}<  \frac{1}{q} = \frac{\alpha}{n} + \frac{n-1}{2n} <\min\left\{\alpha,\frac{\alpha+1+\varepsilon}{3}\right\}. \]
Hence it follows that  $\left(\alpha,q\right)\in{\mathscr{A}}$. This can be similarly proved for the case $n=2$.  In addition, if  $(\alpha,p)\in \mathscr{B}$,   then  $(\alpha,q)\in \mathscr{B}$ since
\[  \frac{1}{q}-\frac{\alpha}{n} \geq \frac{1}{p}-\frac{\alpha}{n}>0. \]
This completes the proof of Lemma \ref{lem:embedding-for-uniqueness}.
\end{proof}

When $\alpha\leq 1/2$, we will use the following lemma which extends a similar result in  \cite{MR2846167} for the Stokes system in three-dimensional Lipschitz domains.
\begin{lem}
\label{lem:L2-boundary-nontangential-convergence}Let  $(\alpha,p)$ satisfy
\begin{equation}\label{eq:trace-control-2}
1<p<\infty,\quad \frac{1}{p}<\alpha<1+\frac{1}{p} ,\quad \text{and }\quad  \frac{1}{p}-\frac{\alpha}{n}\leq \frac{n-1}{2n}.
\end{equation}
If $u$ is harmonic in $\Omega$ and $u\in\Bessel 1{\frac{2n}{n+1}}(\Omega)+\Bessel{\alpha}p(\Omega)$,
then $u^{*}\in\Leb 2(\partial\Omega)$ and $u\rightarrow\Tr u$
nontangentially a.e. on $\partial\Omega$.
\end{lem}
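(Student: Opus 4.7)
The plan is to identify $u$ with the unique $\Leb{2}$-boundary-data solution provided by Theorem \ref{thm:L2-nontangential}. Decompose $u=u_1+u_2$ with $u_1\in\Bessel{1}{2n/(n+1)}(\Omega)$ and $u_2\in\Bessel{\alpha}{p}(\Omega)$. The pair $(1,2n/(n+1))$ satisfies the hypothesis \eqref{eq:trace-control-2} (with equality in the last inequality) and the pair $(\alpha,p)$ satisfies it by assumption, so Corollary \ref{cor:trace-control} applied to each summand yields $\Tr u_1,\Tr u_2\in\Leb{2}(\partial\Omega)$, whence $g:=\Tr u=\Tr u_1+\Tr u_2\in\Leb{2}(\partial\Omega)$. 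Theorem \ref{thm:L2-nontangential} then produces a unique harmonic $v\in\haSob{1/2}{2}(\Omega)$ with $v^*\in\Leb{2}(\partial\Omega)$ and $v\to g$ nontangentially a.e.\ on $\partial\Omega$; the lemma reduces to proving $u=v$.

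To establish $u=v$, I would first construct an intermediate harmonic $h\in\haSob{1/2}{2}(\Omega)$ whose nontangential limit is $g$. Since $(1,2n/(n+1))\in\mathscr{A}$ (case (ii) of Theorem \ref{thm:Dirichlet-Poisson} for $n\ge 3$), Theorem \ref{thm:Dirichlet-Poisson} yields a unique harmonic $h_1\in\Bessel{1}{2n/(n+1)}(\Omega)$ with $\Tr h_1=\Tr u_1$; the Sobolev embedding $\Bessel{1}{2n/(n+1)}(\Omega)\hookrightarrow\Bessel{1/2}{2}(\Omega)$ places $h_1$ in $\haSob{1/2}{2}(\Omega)$, and a density argument using Proposition \ref{prop:JK-nontangential-maximal-function} (approximating $\Tr u_1$ in $\Leb{2}$ by smooth data whose classical harmonic extensions have pointwise boundary values agreeing with their traces) identifies its nontangential limit with $\Tr u_1$. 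Applying Theorem \ref{thm:L2-nontangential} to $\Tr u_2\in\Leb{2}(\partial\Omega)$ produces $h_2\in\haSob{1/2}{2}(\Omega)$ with nontangential limit $\Tr u_2$; setting $h:=h_1+h_2$, the uniqueness part of Theorem \ref{thm:L2-nontangential} forces $h=v$.

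Finally, $w:=u-h$ is harmonic. When $\alpha\ge 1/2$, the Sobolev embedding places each summand of $u$ into $\Bessel{1/2}{2}(\Omega)$, so $w\in\haSob{1/2}{2}(\Omega)$; writing $w=(u_1-h_1)+(u_2-h_2)$ and correcting $u_2-h_2$ via the extension operator of Theorem \ref{thm:trace-theorem}, $w$ can be realised as an element of $\oBessel{1}{2n/(n+1)}(\Omega)+\oBessel{\alpha}{p}(\Omega)$, with $\Tr(u_1-h_1)=0$ by construction of $h_1$ and the trace of the corrected $u_2-h_2$ vanishing by design. Lemma \ref{lem:sumspace-decomposition}—combined with Lemma \ref{lem:embedding-for-uniqueness} when $(\alpha,p)$ itself is not in $\mathscr{A}$—then forces $w=0$ and hence $u=v$.

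The main obstacle is the regime $0<\alpha<1/2$, in which $\Bessel{\alpha}{p}(\Omega)$ does not embed into $\Bessel{1/2}{2}(\Omega)$; consequently $u$ cannot be placed in $\haSob{1/2}{2}(\Omega)$ by Sobolev embedding alone, so the comparison via Lemma \ref{lem:sumspace-decomposition} breaks down. Completing the proof there requires either exploiting the harmonicity of $u$ to upgrade the boundary regularity of the $\Bessel{\alpha}{p}$-piece to $\Bessel{1/2}{2}$, or establishing an independent $\Leb{2}$-uniqueness statement for harmonic functions whose trace lies in $\Leb{2}(\partial\Omega)$ but which are only known to lie in the strictly larger sum space.
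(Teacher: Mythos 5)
Your proposal has a genuine gap, and you have located it yourself: the case $1/p<\alpha<1/2$ is left unresolved. Unfortunately that is precisely the case the lemma exists for. In the paper, whenever $\alpha>1/2$ the comparison with $\haSob{1/2}{2}(\Omega)$ is carried out through the embedding Lemma \ref{lem:embedding-for-uniqueness} and Theorem \ref{thm:Dirichlet-Poisson}, without ever invoking Lemma \ref{lem:L2-boundary-nontangential-convergence}; the lemma is called in the proofs of Lemma \ref{lem:sumspace-decomposition} and of the uniqueness part of Theorem \ref{thm:L2-boundary} exactly in the regime $0<\alpha\le 1/2$, where $\Bessel{\alpha}{p}(\Omega)$ does not embed into $\Bessel{1/2}{2}(\Omega)$ and Proposition \ref{prop:JK-nontangential-maximal-function} cannot be applied to $u$ directly. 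A further caution: your use of Lemma \ref{lem:sumspace-decomposition} is circular at $\alpha=1/2$ (and would be circular for $\alpha<1/2$), since the paper's proof of that lemma in the range $0<\alpha\le 1/2$ itself relies on Lemma \ref{lem:L2-boundary-nontangential-convergence}. There is also a soft spot even in the range you do cover: identifying the nontangential limit of $h_1\in\haSob{1/2}{2}(\Omega)$ with $\Tr h_1$ is essentially a sub-case of the statement being proved, and your "density argument" for it is not spelled out; likewise $u_2-h_2$ does not obviously lie in $\oBessel{\alpha}{p}(\Omega)$, since $h_2$ is only known to lie in $\Bessel{1/2}{2}(\Omega)$.

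The missing idea is to abandon the reduction to $\haSob{1/2}{2}(\Omega)$ altogether and exploit harmonicity through layer potentials. The paper approximates $\Omega$ from inside by smooth domains $\Omega_j$ (Verchota's scheme), uses Corollary \ref{cor:trace-control} --- which is valid for all $(\alpha,p)$ satisfying \eqref{eq:trace-control-2}, including $\alpha<1/2$ --- to bound $\norm{u}{\Leb{2}(\partial\Omega_j)}$ uniformly in $j$, writes $u|_{\Omega_j}$ as a double layer potential with density $f_j$ satisfying $\norm{f_j}{\Leb{2}(\partial\Omega_j)}\le C\norm{u}{\Leb{2}(\partial\Omega_j)}$ with $C$ depending only on the Lipschitz character, pulls the densities back to $\partial\Omega$, extracts a weak $\Leb{2}$ limit, and concludes that $u$ itself is a double layer potential of an $\Leb{2}(\partial\Omega)$ density; the bound $u^*\in\Leb{2}(\partial\Omega)$ then comes from the Coifman--McIntosh--Meyer theorem, and the identification of the nontangential limit with $\Tr u$ is done by a separate approximation of $u$ by $C^\infty(\overline{\Omega})$ functions along the maps $\Lambda_j$. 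Some such quantitative input about harmonic functions on Lipschitz domains (uniform invertibility of the layer potential operators, or an equivalent) appears unavoidable; your second suggested remedy, "an independent $\Leb{2}$-uniqueness statement," is in effect what this machinery delivers.
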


\begin{proof}
By virtue of an approximation scheme due to Verchota \cite[Theorem 1.12]{MR769382}, 
there are sequences of $C^{\infty}$-domains $\Omega_{j}\subset\Omega$
and homeomorphisms $\Lambda_{j}:\partial\Omega\rightarrow\partial\Omega_{j}$
such that $\Lambda_{j}\left(z\right)\in\gamma\left(z\right)$ for
all $j$ and for all $z\in\partial\Omega$. Also, as $j\rightarrow\infty$,  $\sup_{z\in\partial\Omega}\left|\Lambda_{j}\left(z\right)-z\right|\rightarrow0$
 and $\nu_{j}\left(\Lambda_{j}\left(z\right)\right)\rightarrow \nu\left(z\right)$
for a.e. $z\in\partial\Omega$, where
$\nu_{j}$ is the outward unit normal to $\partial\Omega_{j}$. Moreover,
there exist positive functions $\omega_{j}:\partial\Omega\rightarrow\mathbb{R}$,
bounded away from zero and infinity uniformly in $j$, such that $\omega_{j}\rightarrow1$
a.e. on $\partial\Omega$, and $\int_{E}\omega_{j}d\sigma=\int_{\Lambda_{j}\left(E\right)}d\sigma_{j}$
for any measurable $E\subset\partial\Omega$.

For each $j$, we define $g_{j}=u\,|_{\partial\Omega_{j}}.$  Note that
\begin{equation}\tag{$D_{j}$}
\left\{\begin{alignedat}{2}
\triangle u&=0 &&\quad \text{in }\Omega_{j},\\
u&=g_{j} &&\quad \text{on }\partial\Omega_{j}.
\end{alignedat}\right.
\end{equation}
Hence by a classical result of Verchota \cite[Corollary 3.2]{MR769382}, there exists $f_{j}\in\Leb 2\left(\partial\Omega_{j}\right)$ such that  $u\mid_{\Omega_j}$ can be written as the double layer potential  of $f_{j}$
on $\partial\Omega_{j}$:
\begin{equation}\label{eq:layer-potentials}
u(x)=\int_{\partial\Omega_{j}}\frac{\left\langle x-z_j,\nu_{j}\left(z_j\right)\right\rangle }{\left|x-z_j\right|^{n}}f_{j}\left(z_{j}\right)d\sigma_{j}\left(z_{j}\right),\quad x\in\Omega_{j}.
\end{equation}
and
\begin{equation}\label{eq:compactness-argument}
 \norm{f_{j}}{\Leb 2\left(\partial\Omega_{j}\right)}\le C\norm{g_{j}}{\Leb 2\left(\partial\Omega_{j}\right)}=C\norm u{\Leb 2\left(\partial\Omega_{j}\right)}.
\end{equation}
Here   the constant $C$ in \eqref{eq:compactness-argument} depends only on the Lipschitz character of $\Omega$ {since the Lipschitz charactor of $\Omega_j$ can be uniformly controlled by the Lipschitz character of $\Omega$.} Since $(\alpha,p)$ satisfies \eqref{eq:trace-control-2}, it follows from Corollary \ref{cor:trace-control} and its remark that
\begin{equation}\label{eq:boundary-behavior-trace}
\norm u{\Leb 2\left(\partial\Omega_{j}\right)} \le C\norm u{\Bessel 1{\frac{2n}{n+1}}\left(\Omega_j\right)+\Bessel{\alpha}p\left(\Omega_j\right)} \le C\norm u{\Bessel 1{\frac{2n}{n+1}}(\Omega)+\Bessel{\alpha}p(\Omega)}
\end{equation}
for some constant $C$ depending only on the Lipschitz character of
$\Omega$, $\alpha,p$, and $n$. By the change of variables $z_{j}=\Lambda_{j}\left(z\right)$, we deduce from \eqref{eq:layer-potentials} that
\begin{align}
u\left(x\right) & =\int_{\partial\Omega}\frac{\left\langle x-\Lambda_{j}\left(z\right),\nu_{j}\left(\Lambda_j\left(z\right)\right)\right\rangle }{\left|x-\Lambda_{j}\left(z\right)\right|^{n}}f_{j}\left(\Lambda_{j}\left(z\right)\right)\omega_{j}\left(z\right)d\sigma\left(z\right)\label{eq:convergence-double-layer-potentials}
\end{align}
for all $x \in \Omega_j$. For each $j$, we define $F_{j}\left(z\right)=f_{j}\left(\Lambda_{j}\left(z\right)\right)\omega_{j}\left(z\right)$.
Then by \eqref{eq:compactness-argument} and \eqref{eq:boundary-behavior-trace}, we have
\begin{equation*}
\norm{F_{j}}{\Leb 2(\partial\Omega)} \le C\norm u{\Leb 2\left(\partial\Omega_{j}\right)} \le C\norm u{\Bessel 1{\frac{2n}{n+1}}(\Omega)+\Bessel{\alpha}p(\Omega)}
\end{equation*}
for all $j$. Hence $\left\{ F_{j}\right\} $ is a bounded sequence
in $\Leb 2(\partial\Omega)$, and we may assume
that $F_{j}\rightarrow  F$ weakly  in  $\Leb 2(\partial\Omega)$
for some $F\in\Leb 2(\partial\Omega)$. Thus, by letting
$j\rightarrow\infty$ in \eqref{eq:convergence-double-layer-potentials},
we conclude that $u$ is the double layer potential of $F$ on $\partial\Omega$. 
Therefore, by a consequence of the Coifman-McIntosh-Meyer theorem \cite[Theorem IX]{MR672839}, $u^{*}$ belongs to $\Leb 2(\partial\Omega)$ (see Verchota \cite[Theorem 3.1, Corollary 3.2]{MR769382}). Hence it follows from Proposition \ref{prop:JK-nontangential-maximal-function} that there exists $g\in \Leb{2}(\partial\Omega)$ such that  $u\rightarrow g$ nontangentially a.e. on $\partial\Omega$.

It remains to show that $\Tr u=g$ on $\partial\Omega$. For $0<\delta<1$,
we choose $u_{\delta}\in C^{\infty}\left(\overline{\Omega}\right)$ so
that $\norm{u-u_{\delta}}{\Bessel{\alpha}p(\Omega)+\Bessel 1{\frac{2n}{n+1}}(\Omega)}\le\delta$.
Then  Corollary \ref{cor:trace-control} again gives
\begin{align*}
\norm{u\circ\Lambda_{j}-\Tr u}{\Leb 2(\partial\Omega)} & \le\norm{u\circ\Lambda_{j}-u_{\delta}\circ\Lambda_{j}}{\Leb 2(\partial\Omega)}+\norm{u_{\delta}\circ\Lambda_{j}-\Tr u_{\delta}}{\Leb 2(\partial\Omega)}\\
 & \relphantom{=}+\norm{\Tr u_{\delta}-\Tr u}{\Leb 2(\partial\Omega)}\\
 & \le C\norm{u-u_{\delta}}{\Bessel 1{\frac{2n}{n+1}}(\Omega)+\Bessel{\alpha}p(\Omega)}+\norm{u_{\delta}\circ\Lambda_{j}-u_{\delta}}{\Leb 2(\partial\Omega)}\\
 & \le C\delta+\norm{u_{\delta}\circ\Lambda_{j}-u_{\delta}}{\Leb 2(\partial\Omega)}.
\end{align*}
Since $\sup_{z\in\partial\Omega}\left|\Lambda_{j}\left(z\right)-z\right|\rightarrow0$  as $j\rightarrow\infty$, we have
\[
\limsup_{j\rightarrow\infty}\,\norm{u\circ\Lambda_{j}-\Tr u}{\Leb 2(\partial\Omega)}\le C\delta.
\]
Since $0<\delta<1$ is arbitrary, it follows that $u\circ\Lambda_{j}\rightarrow\Tr u$ in $\Leb 2(\partial\Omega)$ as $j\rightarrow\infty$. On the other hand, since $\Lambda_{j}\left(x\right)\in\gamma\left(x\right)$ for any $x\in\partial\Omega$, we have $\left|u\circ\Lambda_{j}\right|\le u^* \in\Leb 2(\partial\Omega)$. Hence by the dominated convergence theorem, $u\circ\Lambda_{j}\rightarrow g$ in $\Leb 2(\partial\Omega)$ as $j\rightarrow\infty$. This completes the proof of Lemma \ref{lem:L2-boundary-nontangential-convergence}.
\end{proof}

These lemmas lead us to the proofs of Lemma \ref{lem:sumspace-decomposition} and Theorem \ref{thm:L2-boundary}.

\begin{proof}[Proof of Lemma \ref{lem:sumspace-decomposition}]
Suppose first that $\alpha>1/2$. Then by Lemma \ref{lem:embedding-for-uniqueness}, there exists $q$ with $(\alpha,q)\in \mathscr{A}$ such that
\[       \oBessel{\alpha}{p}(\Omega) + \oBessel{1}{\frac{2n}{n+1}}(\Omega) \hookrightarrow \oBessel{\alpha}{q}(\Omega). \]
Since $(\alpha,q) \in \mathscr{A}$, it follows from Theorems \ref{thm:Dirichlet-Poisson} and \ref{thm:Dirichlet-Poisson-2}  that
\[     \oBessel{\alpha}{q}(\Omega) \cap \haSob{1/2}{2}(\Omega) =\{0\}.\]
This implies the assertion when $\alpha > 1/2$. Next, suppose that $0<\alpha \leq 1/2$ and
\[   u\in \haSob{1/2}{2}(\Omega) \cap \left( \oBessel{1}{\frac{2n}{n+1}}(\Omega) + \oBessel{\alpha}{p}(\Omega)\right). \]
Since $u$ is harmonic in $\Omega$ and $(\alpha,p)$ satisfies \eqref{eq:trace-control-2}, it follows from Lemma \ref{lem:L2-boundary-nontangential-convergence} that  $u^* \in \Leb{2}(\partial\Omega)$ and $u \rightarrow \Tr u =0$ nontangentially a.e.\ on $\partial \Omega$. Hence by    Theorem \ref{thm:L2-nontangential},  $u=0$ in  $\Omega$. This completes the proof of Lemma \ref{lem:sumspace-decomposition}.
\end{proof}

\begin{proof}[Proof of Theorem \ref{thm:L2-boundary}]  The existence assertion of Theorem \ref{thm:L2-boundary} was already proved in Section \ref{sec:main-results}. To prove the uniqueness assertion, suppose that $u \in D_{\alpha}^{p}(\Omega)$ satisfies (a) and (b) in Theorem \ref{thm:L2-boundary} with $(f,u_{\Di})=(0,0)$. Then by  Theorem \ref{thm:L2-nontangential},  $\mathbb{P}u =0$ and so $u \in \oBessel{1}{\frac{2n}{n+1}}(\Omega)+\oBessel{\alpha}{p}(\Omega) $.  If  $\alpha>1/2$, then it follows from Lemma \ref{lem:embedding-for-uniqueness} that $u\in \oBessel{\alpha}{q}(\Omega)$ for some $q$ satisfying $(\alpha,q) \in \mathscr{A}\cap\mathscr{B}$. Hence by Theorem \ref{thm:Dirichlet-problems}, $u$ is identically zero in $\Omega$.  Suppose thus that $0<\alpha \leq 1/2$. Since $(\alpha,p)$ satisfies \eqref{eq:trace-control-2}, it follows from  Corollary \ref{cor:trace-control} and Lemma \ref{lem:basic-estimates} that  $u\in \Leb{\frac{2n}{n-1}}(\Omega)$ and   $\Div(u\boldb) \in \Bessel{-1}{\frac{2n}{n+1}}(\Omega)$. Hence by  Theorem \ref{thm:Dirichlet-problems} (i), there exists $v\in \oBessel{1}{\frac{2n}{n+1}}(\Omega)$ such that $\triangle v =\Div(u\boldb)$ in $\Omega$. Define $w=u-v$. Then $\triangle w=0$ in $\Omega$ and $w \in \oBessel{1}{\frac{2n}{n+1}}(\Omega)+\oBessel{\alpha}{p}(\Omega)$. So by Lemma \ref{lem:L2-boundary-nontangential-convergence}, $w^* \in \Leb{2}(\partial\Omega)$ and $w\rightarrow 0$ nontangentially a.e.\ on $\partial\Omega$. Hence it follows from Proposition \ref{prop:JK-nontangential-maximal-function} and Theorem \ref{thm:L2-nontangential} that $w=0$ in $\Omega$ and so $u=v \in \oBessel{1}{\frac{2n}{n+1}}(\Omega)$. Thus, it follows from Theorem \ref{thm:Dirichlet-problems} that $u=0$ in $\Omega$. This completes the proof of the uniqueness assertion of Theorem \ref{thm:L2-boundary}.

To prove the regularity assertion, we write $u=u_1+u_2+u_3$, where $u_1\in \haSob{1/2}{2}(\Omega)$, $u_1\rightarrow u_{\Di}$ nontangentially a.e. on $\partial\Omega$, $u_2\in \oBessel{1}{\frac{2n}{n+1}}(\Omega)$ is a solution of \eqref{eq:perturbed-L2-data}, and $u_3 \in \Bessel{\alpha}{p}(\Omega)$ is a solution of \eqref{eq:D-1}  with $u_{\Di}=0$.    Suppose first that $u_{\Di} \in \Leb{q}(\partial\Omega)$ and  $2\leq q<\infty$. Then by Theorem \ref{thm:L2-nontangential}, $u_1 \in \Bessel{1/q}{q}(\Omega)$. Since $q\geq 2$, there exists $(\gamma,r)\in \mathscr{A}\cap\mathscr{B}$ such that
\[   \frac{1}{q}<\gamma\quad\text{and}\quad \frac{1}{q}-\frac{1/q}{n}=\frac{1}{r}-\frac{\gamma}{n}.   \]
By Lemma \ref{lem:basic-estimates}, $\Div(u_1\boldb) \in \Bessel{\gamma-2}{r}(\Omega)$. Hence it follows from Theorem \ref{thm:Dirichlet-problems} (i) and Theorem \ref{thm:Sobolev-embedding} that $u_2 \in \Bessel{\gamma}{r}(\Omega)\hookrightarrow \Bessel{1/q}{q}(\Omega)$.   This proves that
\[    u\in \Bessel{1/q}{q}(\Omega) + \Bessel{\alpha}{p}(\Omega). \]
  Suppose next that $u_{\Di} \in \Leb{\infty}(\partial\Omega)$. By Theorem \ref{thm:L2-nontangential}, we have $u_1 \in \Leb{\infty}(\Omega)$.  Since $(\alpha,p) \in \mathscr{B}$, we have
\[    \frac{1}{p'} - \frac{1-\alpha}{n} \leq \frac{1}{n'}. \]
So by Theorem \ref{thm:Sobolev-embedding} and  H\"older's  inequality, we have
\[ \Bessel{1-\alpha}{p'}(\Omega)\hookrightarrow \Leb{n'}(\Omega)\quad \text{and}\quad  \Div(u_1\boldb) \in \Bessel{\alpha-2}{p}(\Omega). \] Therefore, it follows from  Theorem \ref{thm:Dirichlet-problems} that $u_2 \in \Bessel{\alpha}{p}(\Omega)$, which implies that
\[   u \in \Leb{\infty}(\Omega) +  \Bessel{\alpha}{p}(\Omega).\]
This completes the proof of Theorem \ref{thm:L2-boundary}.
\end{proof}

\section{Proofs of Theorems \ref{thm:Neumann} and \ref{thm:real-Neumann-problem}} \label{sec:Neumann}
This section is devoted to the proofs of Theorems  \ref{thm:Neumann} and \ref{thm:real-Neumann-problem} which are concerned with the Neumann problems \eqref{eq:N-1} and \eqref{eq:N-2}.  The following theorem is a special case of a result due to Mitrea-Taylor \cite[Theorem 12.1]{MR1781631}.

\begin{thm}\label{thm:MT-Neumann}
Let  $\lambda>0$ and  $(\alpha,p)\in \mathscr{A}$.  For every $f\in \oBessel{\alpha-2}{p}(\Omega)$ and $u_{\Neu} \in \bes{\alpha-1-1/p}{p}(\partial\Omega)$, there exists a unique function $u\in \Bessel{\alpha}{p}(\Omega)$ satisfying
\[    \action{\nabla u,\nabla \phi}+\lambda \action{u,\phi} = \action{f,  \phi} +\action{u_{\Neu},\Tr \phi}    \]
for all $\phi \in \Bessel{2-\alpha}{p'}(\Omega)$.  Moreover, we have
\[   \norm{u}{\Bessel{\alpha}{p}(\Omega)} \leq C \left( \norm{f}{\Bessel{\alpha-2}{p}(\Omega)} + \norm{u_{\Neu}}{\bes{\alpha-1-1/p}{p}(\partial\Omega)}\right) \]
for some constant $C=C(n,\alpha,p,\boldb,\lambda,\Omega)$.
\end{thm}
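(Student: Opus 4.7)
The plan is to deduce Theorem \ref{thm:MT-Neumann} from Theorem \ref{thm:Neumann-Poisson} by a Fredholm argument. Using Theorem \ref{thm:trace-theorem} together with the dualities \eqref{eq:pairing-1}--\eqref{eq:pairing-2}, the pair $(f,u_{\Neu})$ assembles into a single bounded functional $F\in\oBessel{\alpha-2}{p}(\Omega)$ with norm controlled by $\norm{f}{\oBessel{\alpha-2}{p}(\Omega)}+\norm{u_{\Neu}}{\bes{\alpha-1-1/p}{p}(\partial\Omega)}$, so the statement is equivalent to bijectivity (combined with the open mapping theorem) of the bounded operator
\begin{equation*}
\mathcal{T}_\lambda : \Bessel{\alpha}{p}(\Omega) \to \oBessel{\alpha-2}{p}(\Omega),\qquad \action{\mathcal{T}_\lambda u,\phi} = \action{\nabla u,\nabla \phi} + \lambda \action{u,\phi}.
\end{equation*}
I would decompose $\mathcal{T}_\lambda=\mathcal{T}_0+\lambda\mathcal{J}$, where $\mathcal{T}_0$ is the Neumann--Poisson operator of Theorem \ref{thm:Neumann-Poisson} and $\mathcal{J}$ maps $u$ to the functional $\phi\mapsto\int_\Omega u\phi\myd{x}$, well-defined because $2-\alpha\geq 0$ implies $\Bessel{2-\alpha}{p'}(\Omega)\hookrightarrow\Leb{p'}(\Omega)$. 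Theorem \ref{thm:Neumann-Poisson} identifies $\ker\mathcal{T}_0$ with the constants and the range of $\mathcal{T}_0$ with the compatibility hyperplane $\{F:\action{F,1}=0\}$, so $\mathcal{T}_0$ is Fredholm of index zero. Since $\mathcal{J}$ factors through the compact embedding $\Bessel{\alpha}{p}(\Omega)\hookrightarrow\Leb{p}(\Omega)$ provided by Theorem \ref{thm:Sobolev-embedding}(ii), it is compact; hence $\mathcal{T}_\lambda$ is Fredholm of index zero, and by the Riesz--Schauder theory bijectivity reduces to injectivity.

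To verify $\ker\mathcal{T}_\lambda=\{0\}$, I would first dispose of the case $(\alpha,p)=(1,2)$: testing with $\phi=u$ in $\mathcal{T}_\lambda u=0$ yields $\norm{\nabla u}{\Leb{2}(\Omega)}^2+\lambda\norm{u}{\Leb{2}(\Omega)}^2=0$, which forces $u\equiv0$ since $\lambda>0$. For general $(\alpha,p)\in\mathscr{A}$, suppose $u\in\ker\mathcal{T}_\lambda$; testing against $\phi\equiv 1$ gives the vanishing mean $\int_\Omega u\myd{x}=0$. Now rewrite the equation as $\mathcal{T}_0 u=-\lambda\mathcal{J}u$. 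By Sobolev embedding, $u\in\Leb{p}(\Omega)$, and the induced functional $\mathcal{J}u$ lies in $\oBessel{\beta-2}{q}(\Omega)$ for a strictly improved pair $(\beta,q)\in\mathscr{A}$; Theorem \ref{thm:Neumann-Poisson} then furnishes a solution $v\in\Bessel{\beta}{q}(\Omega)$ of the same equation. The uniqueness clause in $\Bessel{\alpha}{p}(\Omega)$ modulo constants, combined with $\int u=\int v=0$, identifies $u=v\in\Bessel{\beta}{q}(\Omega)$. Iterating this regularity lifting, and invoking the $\alpha_j$-scheme from Proposition \ref{prop:uniqueness-of-D} whenever one Sobolev gain does not suffice, drives $(\alpha,p)$ to $(1,2)$ in finitely many steps, where the previous case concludes $u=0$.

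The principal obstacle is exactly this bootstrap: a single Sobolev lift rarely lands directly in $\Bessel{1}{2}(\Omega)$, and every intermediate pair must remain inside the admissible set $\mathscr{A}$, whose geometry on a general Lipschitz domain depends on the Jerison--Kenig parameter $\varepsilon$ and can be quite narrow. The iterative construction in Proposition \ref{prop:uniqueness-of-D} is designed to navigate this region, and I would transplant it essentially verbatim to the Neumann setting, with Theorem \ref{thm:Neumann-Poisson} taking the role that Theorem \ref{thm:Dirichlet-Poisson} plays in the Dirichlet argument.
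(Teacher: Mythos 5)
The paper offers no internal proof of Theorem \ref{thm:MT-Neumann}: it is quoted as a special case of Mitrea--Taylor \cite[Theorem 12.1]{MR1781631}, so there is nothing to match your argument against line by line. That said, your overall architecture --- absorb $(f,u_{\Neu})$ into one functional in $\oBessel{\alpha-2}{p}(\Omega)$, write $\mathcal{T}_\lambda=\mathcal{T}_0+\lambda\mathcal{J}$ with $\mathcal{T}_0$ Fredholm of index zero by Theorem \ref{thm:Neumann-Poisson} and $\mathcal{J}$ compact through the compact embedding $\Bessel{\alpha}{p}(\Omega)\subset\Leb{p}(\Omega)$, reduce to injectivity by Riesz--Schauder, anchor at $(1,2)$ with the energy identity, and bootstrap --- is sound in outline and is precisely the methodology the paper deploys for its drift operators in Sections \ref{sec:Dirichlet} and \ref{sec:Neumann}.

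The gap is in the injectivity step, and it is not merely a matter of transplanting Proposition \ref{prop:uniqueness-of-D}. Your bootstrap has only two moves: lifting regularity along a Sobolev line, and lowering the integrability exponent via $\Bessel{\beta}{q_1}(\Omega)\subset\Bessel{\beta}{q_2}(\Omega)$ for $q_2\le q_1$. The $\alpha_j$-scheme of Proposition \ref{prop:uniqueness-of-D} is built for $\mathscr{A}\cap\mathscr{B}$ --- hence for $\alpha\le 1$ and $1/p>\alpha/n$ --- and it terminates at a pair $(1,q)$ with $(1,q)\in\mathscr{A}\cap\mathscr{B}$, \emph{not} at $(1,2)$; the paper then needs Lemma \ref{lem:L-1p-result}, whose proof for $q<2$ rests on the adjoint-kernel identity \eqref{eq:kernel-dimension-Dirichlet}, to close the loop. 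Here the full set $\mathscr{A}$ must be covered: it contains pairs with $\alpha>1$ (for which $(1,p)$ need not lie in $\mathscr{A}$), and for $n=3$, say, pairs in region (ii) with $1/p$ near $(2+\varepsilon)/3$ whose Sobolev line meets $\beta=1$ at some $q<2$; from $(1,q)$ with $q<2$ neither of your two moves reaches $\Bessel{1}{2}(\Omega)$, and you cannot test the equation with $u$ itself. The missing ingredient is the duality step the paper uses in \eqref{eq:kernel-duality-2}: since $\action{\mathcal{T}_\lambda u,v}=\action{\mathcal{T}_\lambda v,u}$, Riesz--Schauder gives $\dim\ker\mathcal{T}_\lambda^{(\alpha,p)}=\dim\ker\mathcal{T}_\lambda^{(2-\alpha,p')}$, which lets you assume $\alpha\le1$ at the outset and dispose of the terminal case $(1,q)$, $q<2$, by running the energy identity in $\Bessel{1}{q'}(\Omega)\subset\Bessel{1}{2}(\Omega)$. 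With that inserted your proof closes; as written, it does not.
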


For $\lambda > 0$ and $(\alpha,p)\in \mathscr{A}$, the mapping
\[         (u,\phi)\mapsto \action{\mathcal{L}_{\alpha,p}^{0,\lambda} u,\phi}=\action{\nabla u,\nabla \phi} +\lambda \action{u,\phi} \]
defines a bounded linear operator $\mathcal{L}_{\alpha,p}^{0,\lambda}$ from $\Bessel{\alpha}{p}(\Omega)$ to $\oBessel{\alpha-2}{p}(\Omega)$. Moreover,  $\mathcal{L}_{\alpha,p}^{0,\lambda}$ is   bijective by Theorem \ref{thm:MT-Neumann}. Note also that
\begin{equation}\label{eq:lambda-duality-Laplacian}
\action{\mathcal{L}_{\alpha,p}^{0,\lambda} u,v} = \action{\mathcal{L}_{2-\alpha,p'}^{0,\lambda} v,u}\quad \text{for all } (u,v) \in \Bessel{\alpha}{p}(\Omega) \times \Bessel{2-\alpha}{p'}(\Omega).
\end{equation}

Now let $(\alpha,p)\in \mathscr{A}\cap\mathscr{B}$ be fixed. By Lemma \ref{lem:basic-estimates}, the mapping
\[    (u,v)\in \Bessel{\alpha}{p}(\Omega)\times \Bessel{2-\alpha}{p'}(\Omega)  \mapsto \action{\mathcal{L}_{\alpha,p}^{N} u,v} =\action{\nabla u,\nabla v} -\action{u\boldb,\nabla v}\]
defines a bounded linear operator $\mathcal{L}_{\alpha,p}^{N}$ from $\Bessel{\alpha}{p}(\Omega)$ to $\oBessel{\alpha-2}{p}(\Omega)$. Also,  the mapping
\[   (u,v)\in \Bessel{\alpha}{p}(\Omega)\times \Bessel{2-\alpha}{p'}(\Omega)\mapsto \action{\mathcal{L}_{2-\alpha,p'}^{*,N} v,u} =\action{\nabla v,\nabla u} -\action{\boldb \cdot \nabla v, u} \]
defines a bounded linear operator $\mathcal{L}_{2-\alpha,p'}^{*,N}$ from $\Bessel{2-\alpha}{p'}(\Omega)$ to $\oBessel{-\alpha}{p'}(\Omega)$. Let $\mathcal{I}_p : \Leb{p}(\Omega) \rightarrow (\Leb{p'}(\Omega))'$ be the isomorphism defined by
\[   \action{\mathcal{I}_{p} u,v} = \int_\Omega uv \myd{x}\quad \text{for all } (u,v) \in \Leb{p}(\Omega)\times \Leb{p'}(\Omega).  \]

Suppose that $\lambda>0$. Recall from Lemma \ref{lem:compactness-of-operators} that
\[ \mathcal{B}_{\alpha,p}^{N}: \Bessel{\alpha}{p}(\Omega)\rightarrow \Bessel{\alpha-2}{p}(\Omega) \quad\text{and}\quad\mathcal{B}^{*}_{2-\alpha,p'}:\Bessel{2-\alpha}{p'}(\Omega)\rightarrow \Bessel{-\alpha}{p'}(\Omega) \]
 are compact linear operators.    By the definitions, we have
\[  \mathcal{L}_{\alpha,p}^{N} + \lambda \mathcal{I}_p = \mathcal{L}_{\alpha,p}^{0,\lambda} + \mathcal{B}_{\alpha,p}^{N}  \]
and
\[  \mathcal{L}_{2-\alpha,p'}^{*,N} + \lambda \mathcal{I}_{p'} = \mathcal{L}_{2-\alpha,p'}^{0,\lambda} + \mathcal{B}_{2-\alpha,p'}^{*}.  \]
By \eqref{eq:duality-B} and \eqref{eq:lambda-duality-Laplacian}, we have
\begin{equation}\label{eq:lambda-duality-Neumann}
\action{(\mathcal{L}_{\alpha,p}^{N}+\lambda\mathcal{I}_{p})u,v} = \action{(\mathcal{L}_{2-\alpha,p'}^{*,N}+\lambda\mathcal{I}_{p'})v,u}
\end{equation}
for all $u\in \Bessel{\alpha}{p}(\Omega)$ and $v\in \Bessel{2-\alpha}{p'}(\Omega)$. Since $\mathcal{L}_{\alpha,p}^{0,\lambda}$ and $\mathcal{L}_{2-\alpha,p'}^{0,\lambda}$ are bijective, it follows that
\[     \mathcal{L}_{\alpha,p}^{0,\lambda} +\mathcal{B}_{\alpha,p}^{N} = \mathcal{L}_{\alpha,p}^{0,\lambda} \circ \left[I + \left(\mathcal{L}_{\alpha,p}^{0,\lambda}\right)^{-1}\circ \mathcal{B}_{\alpha,p}^{N} \right] \]
and
\[     \mathcal{L}_{2-\alpha,p'}^{0,\lambda} + \mathcal{B}_{2-\alpha,p'}^* =  \left[I + \mathcal{B}_{2-\alpha,p'}^* \circ {\left( \mathcal{L}_{2-\alpha,p'}^{0,\lambda}\right)}^{-1} \right] \circ \mathcal{L}_{2-\alpha,p'}^{0,\lambda}.  \]
Thus,
\begin{equation}\label{eq:kernel-characterization-Neumann-1}
   \ker(\mathcal{L}_{\alpha,p}^N +\lambda \mathcal{I}_{p}) = \ker\left(I+{\left( \mathcal{L}_{\alpha,p}^{0,\lambda}\right)}^{-1} \circ \mathcal{B}_{\alpha,p}^N\right)
\end{equation}
and
\begin{equation}\label{eq:kernel-characterization-Neumann-2}
  \ker(\mathcal{L}_{2-\alpha,p'}^{*,N} +\lambda \mathcal{I}_{p'})=\mathcal{L}_{2-\alpha,p'}^{0,\lambda}\left[\ker \left(I + \mathcal{B}_{2-\alpha,p'}^* \circ {\left( \mathcal{L}_{2-\alpha,p'}^{0,\lambda}\right)}^{-1} \right)\right].
\end{equation}
On the other hand, since $\mathcal{B}_{\alpha,p}^{N}$ and $\mathcal{B}_{2-\alpha,p'}^{*}$ are compact linear operators, it follows from the Riesz-Schauder theory that $I+(\mathcal{L}_{\alpha,p}^{0,\lambda})^{-1} \circ \mathcal{B}_{\alpha,p}^{N}$ is injective if and only if it is bijective, and
\begin{equation}\label{eq:kernel-duality-1}
 \dim \ker\left(I+{\left( \mathcal{L}_{\alpha,p}^{0,\lambda}\right)}^{-1} \circ \mathcal{B}_{\alpha,p}^N\right) = \dim \ker \left(I + \left[{\left( \mathcal{L}_{\alpha,p}^{0,\lambda}\right)}^{-1} \circ \mathcal{B}_{\alpha,p}^N\right]^{\prime}\right)<\infty.
\end{equation}
 From \eqref{eq:duality-B} and \eqref{eq:lambda-duality-Laplacian}, we easily get
\begin{equation}\label{eq:kernel-adjoint-Neumann}
\left[ \left(\mathcal{L}_{\alpha,p}^{0,\lambda}\right)^{-1} \circ \mathcal{B}_{\alpha,p}^N \right]^{\prime} = \mathcal{B}_{2-\alpha,p'}^* \circ \left( \mathcal{L}_{2-\alpha,p'}^{0,\lambda} \right)^{-1}.
\end{equation}
Thus by \eqref{eq:kernel-characterization-Neumann-1},  \eqref{eq:kernel-characterization-Neumann-2},  \eqref{eq:kernel-duality-1}, and \eqref{eq:kernel-adjoint-Neumann},  we have
\begin{equation}\label{eq:kernel-duality-2}
   \dim \ker(\mathcal{L}_{\alpha,p}^{N} +\lambda \mathcal{I}_{p})  = \dim\ker(\mathcal{L}_{2-\alpha,p'}^{*,N} +\lambda \mathcal{I}_{p'}) <\infty.
\end{equation}

The kernels $\mathcal{L}_{1,2}^{N}+\lambda \mathcal{I}_{2}$ and $\mathcal{L}_{1,2}^{*,N}+\lambda \mathcal{I}_{2}$ have been characterized by  Droniou-V\'azquez \cite[Propositions 2.2 and 5.1]{MR2476418} and Kang-Kim \cite[Lemma 4.5]{MR3623550}.
\begin{lem}\label{lem:L2-results-Neumann-kernel}\leavevmode
\begin{enumerate}[label={\textnormal{(\roman*)}},ref={\roman*},leftmargin=2em]
\item $\ker \mathcal{L}_{1,2}^{N} =\mathbb{R}\hat{u}$\, and\, $\ker\mathcal{L}_{1,2}^{*,N} = \mathbb{R}$\, for some function $\hat{u}\in \Bessel{1}{2}(\Omega)$ satisfying $\hat{u}>0$ a.e. on $\Omega$.
\item $\ker (\mathcal{L}_{1,2}^{N}+\lambda \mathcal{I}_{2}) = \ker (\mathcal{L}_{1,2}^{*,N}+\lambda \mathcal{I}_{2})=\{0\}$\, for all $\lambda>0$.
\end{enumerate}
\end{lem}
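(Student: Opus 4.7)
The plan is to combine Fredholm-type arguments analogous to those developed for the Dirichlet problem in Section \ref{sec:Dirichlet} with maximum-principle ingredients tailored to the scaling-critical drift $\boldb \in \Leb{n}(\Omega)^n$.

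For part (ii), I would first show $\ker(\mathcal{L}^{*,N}_{1,2} + \lambda \mathcal{I}_2) = \{0\}$ directly and then deduce the primal statement from the duality identity \eqref{eq:kernel-duality-2} with $(\alpha,p) = (1,2)$. Suppose $v \in \Bessel{1}{2}(\Omega)$ lies in this kernel. Testing against $(v - k)^+ \in \Bessel{1}{2}(\Omega)$ for $k \geq 0$, and using H\"older's inequality together with the Sobolev embedding $\Bessel{1}{2}(\Omega) \hookrightarrow \Leb{2n/(n-2)}(\Omega)$, yields
\[
\norm{\nabla(v-k)^+}{\Leb{2}(\Omega)}^{2} + \lambda\norm{(v-k)^+}{\Leb{2}(\Omega)}^{2} \leq C\norm{\boldb}{\Leb{n}(\{v>k\})}\bigl(\norm{\nabla(v-k)^+}{\Leb{2}(\Omega)}^{2} + \norm{(v-k)^+}{\Leb{2}(\Omega)}^{2}\bigr).
\]
Since $\norm{\boldb}{\Leb{n}(\{v>k\})} \to 0$ as $k \to \infty$ by absolute continuity, the right-hand side can be absorbed for $k$ large, giving $(v-k)^+ \equiv 0$; iterating together with the analogous argument on $(k-v)^+$ gives $v \in \Leb{\infty}(\Omega)$. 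Once $v$ is bounded, a Moser-type iteration (or equivalently a Trudinger-type maximum principle valid for $\Leb{n}$-drifts) combined with the strict positivity of the $\lambda v^{2}$ term forces $v \equiv 0$.

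For part (i), the operator $\mathcal{L}^N_{1,2}$ differs from the bijective $\mathcal{L}^{0,\lambda}_{1,2}$ (for any fixed $\lambda > 0$) by the compact perturbation $\mathcal{B}^N_{1,2} - \lambda \mathcal{I}_2$, so the same Riesz-Schauder and duality argument that produced \eqref{eq:kernel-duality-2} gives $\dim \ker \mathcal{L}^N_{1,2} = \dim \ker \mathcal{L}^{*,N}_{1,2} < \infty$. Since constants lie in $\ker \mathcal{L}^{*,N}_{1,2}$, this common dimension is at least one; for the reverse inequality I would apply the Moser iteration above to any $v \in \ker \mathcal{L}^{*,N}_{1,2}$ to obtain $v \in \Leb{\infty}(\Omega)$ and then invoke the strong maximum principle for $-\triangle - \boldb \cdot \nabla$ with $\boldb \in \Leb{n}$, applied to $v - \min v$ and $\max v - v$, to conclude that $v$ is constant. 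Hence $\ker \mathcal{L}^N_{1,2} = \mathbb{R}\hat{u}$ for some nonzero $\hat{u}$. For positivity I would solve the regularized problem $\mathcal{L}^N_{1,2} u_\varepsilon + \varepsilon u_\varepsilon = f$ with $0 \leq f \in \Leb{2}(\Omega)$, $f \not\equiv 0$, and small $\varepsilon > 0$, using part (ii); a weak-maximum-principle argument (testing against $u_\varepsilon^- = \min(u_\varepsilon, 0)$ and absorbing the drift as above) gives $u_\varepsilon \geq 0$, and since $\norm{u_\varepsilon}{\Leb{2}(\Omega)} \to \infty$ as $\varepsilon \to 0$ (because the limiting equation has no solution when $\int f \neq 0$), the normalization $u_\varepsilon / \norm{u_\varepsilon}{\Leb{2}(\Omega)}$ converges along a subsequence to a nonnegative element of $\ker \mathcal{L}^N_{1,2}$, which must be a positive multiple of $\hat{u}$. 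The pointwise positivity $\hat{u} > 0$ a.e.\ then follows from a weak Harnack inequality for divergence-form equations with $\Leb{n}$ drift.

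The principal obstacle throughout is the borderline integrability $\boldb \in \Leb{n}(\Omega)^n$, which is scaling-critical and sits at the threshold of the classical maximum-principle and Harnack theory. The essential device is the absolute-continuity estimate $\norm{\boldb}{\Leb{n}(\{|v| > k\})} \to 0$ as $k \to \infty$, which allows the drift term to be absorbed into the Dirichlet energy once attention is restricted to sufficiently high level sets; this underlies both the $\Leb{\infty}$ bound obtained by Stampacchia iteration in part (ii) and the Harnack-based positivity argument for $\hat{u}$ in part (i).
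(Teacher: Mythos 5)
The first thing to note is that the paper does not prove this lemma at all: it is imported verbatim from Droniou--V\'azquez \cite[Propositions 2.2 and 5.1]{MR2476418} and Kang--Kim \cite[Lemma 4.5]{MR3623550}, so there is no in-paper argument to compare yours against, and any proof you give is necessarily your own. Your overall framework is the right one and is consistent with what the paper does later in Section \ref{sec:Neumann}: writing $\mathcal{L}^{N}_{1,2}=\mathcal{L}^{0,\lambda}_{1,2}+(\mathcal{B}^{N}_{1,2}-\lambda\mathcal{I}_{2})$ with a compact perturbation to get $\dim\ker\mathcal{L}^{N}_{1,2}=\dim\ker\mathcal{L}^{*,N}_{1,2}<\infty$, observing that constants lie in $\ker\mathcal{L}^{*,N}_{1,2}$, and proving $\Leb{\infty}$ bounds by truncation at high levels where $\norm{\boldb}{\Leb{n}(\{v>k\})}$ is small by absolute continuity --- all of this is correct.

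The gaps are in the steps that actually pin the kernels down, and they are exactly the hard points of the cited propositions. (1) In part (ii), ``Moser-type iteration forces $v\equiv 0$'' is not an argument: Moser iteration yields boundedness or Harnack information, never vanishing. What works is a second truncation at levels $M-\epsilon$, $M$ the essential supremum of $v$ (assumed positive): $\nabla(v-M+\epsilon)^{+}$ is supported in $\{M-\epsilon<v<M\}$, which decreases to the empty set, so the drift is absorbable there, while $\lambda\int_\Omega v\,(v-M+\epsilon)^{+}\,dx\ge\lambda(M-\epsilon)\epsilon^{-1}\norm{(v-M+\epsilon)^{+}}{\Leb{2}(\Omega)}^{2}$ dominates as $\epsilon\to0$; this is a Stampacchia argument at the supremum, not an iteration. (2) In part (i), reducing everything to ``bounded elements of $\ker\mathcal{L}^{*,N}_{1,2}$ are constant by the strong maximum principle for $\boldb\in\Leb{n}$'' hides the crux inside an inapplicable citation: the classical strong maximum principle and Hopf lemma require $\boldb\in\Leb{q}$ with $q>n$, the case $q=n$ is critical, and even granting an interior version you must exclude the extremum being attained only at the Lipschitz boundary, where no Hopf lemma is available for a conormal condition that holds only weakly. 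The repair is again global and elementary: since the Neumann weak formulation admits every $u\in\Bessel{1}{2}(\Omega)$ as a test function, no cutoff is needed; one tests with $(v-M+\epsilon)^{+}$, and either $|\{v=M\}|>0$ (a Poincar\'e argument then forces $v\equiv M$ or a contradiction) or $|\{v=M\}|=0$ (a global De Giorgi iteration on levels increasing to $M-\epsilon_{0}/2$ then yields $v\le M-\epsilon_{0}/2$, a contradiction). (3) The nonnegativity of $u_{\varepsilon}$ does not follow from ``testing against $u_{\varepsilon}^{-}$ and absorbing the drift as above'': the absorption device requires $\norm{\boldb}{\Leb{n}}$ to be small on the relevant set, and $\{u_{\varepsilon}<0\}$ is a level set at height $0$, not near the infimum, so there is no smallness to exploit; the same remark applies to the weak Harnack inequality you invoke for the strict positivity of $\hat{u}$, which at the critical exponent is not an off-the-shelf classical fact. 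Each of these steps must either be carried out with the level-set machinery sketched above or honestly attributed to \cite{MR2476418,MR3623550}, which is what the paper itself does.
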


The following lemma will be used to characterize the kernels $\mathcal{L}_{\alpha,p}^{N}+\lambda \mathcal{I}_p$ and $\mathcal{L}_{2-\alpha,p'}^{*,N}+\lambda \mathcal{I}_{p'}$ for  $(\alpha,p)\in \mathscr{A}\cap\mathscr{B}$ and $\lambda \geq 0$.
\begin{lem}\label{lem:Neumann-regularity-estimates}
Let $(\alpha,p)\in \mathscr{A}\cap\mathscr{B}$. Then there exists $(1,q)\in \mathscr{A}\cap\mathscr{B}$ such that
\[   \ker(\mathcal{L}_{\alpha,p}^{N}+\lambda \mathcal{I}_{p})\subset \ker (\mathcal{L}_{1,q}^{N} + \lambda \mathcal{I}_{q})\quad \text{for all } \lambda \geq 0.   \]
\end{lem}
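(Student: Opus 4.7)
The plan is to mirror the structure of Proposition~\ref{prop:uniqueness-of-D}. The first step is to establish a Neumann analogue of Lemma~\ref{lem:Sobolev-regularity}: if $(\alpha,p),(\beta,r)\in\mathscr{A}\cap\mathscr{B}$ satisfy $\alpha\le\beta$ and $\frac{1}{r}-\frac{\beta}{n}=\frac{1}{p}-\frac{\alpha}{n}$, then
\[ \ker(\mathcal{L}_{\alpha,p}^{N}+\lambda\mathcal{I}_p)\subset\ker(\mathcal{L}_{\beta,r}^{N}+\lambda\mathcal{I}_r) \quad \text{for every } \lambda\ge 0. \]
Once this is in hand, the inductive geometric construction in the proof of Proposition~\ref{prop:uniqueness-of-D} transfers verbatim (the sets $\mathscr{A}$ and $\mathscr{B}$ are identical here) to produce a pair $(1,q)\in\mathscr{A}\cap\mathscr{B}$ with the claimed inclusion.

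To prove the regularity step, fix $u\in\ker(\mathcal{L}_{\alpha,p}^{N}+\lambda\mathcal{I}_p)$, so that $u\in\Bessel{\alpha}{p}(\Omega)$ and
\[ \action{\nabla u,\nabla\phi}-\action{u\boldb,\nabla\phi}+\lambda\int_\Omega u\phi\myd{x}=0 \quad\text{for all }\phi\in\Bessel{2-\alpha}{p'}(\Omega). \]
Testing with $\phi\equiv 1$ yields $\lambda\int_\Omega u\myd{x}=0$. By Lemma~\ref{lem:basic-estimates}(i) with source pair $(\alpha,p)\in\mathscr{B}$ and target pair $(\beta,r)$, the functional $\phi\mapsto-\action{u\boldb,\nabla\phi}$ extends boundedly to $\Bessel{2-\beta}{r'}(\Omega)$, so $-\Div(u\boldb)\in\oBessel{\beta-2}{r}(\Omega)$. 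Since $r\le p$ and $2-\beta\ge 0$, the chain $L^p(\Omega)\hookrightarrow L^r(\Omega)\hookrightarrow\oBessel{\beta-2}{r}(\Omega)$ is valid, and hence $f:=-\Div(u\boldb)-\lambda u\in\oBessel{\beta-2}{r}(\Omega)$ with compatibility $\action{f,1}=0$. The displayed identity says exactly that $u$ is an $\Bessel{\alpha}{p}$-solution of the Neumann problem for the Poisson equation with data $(f,0)$.

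Apply Theorem~\ref{thm:Neumann-Poisson} with $(\beta,r)$ to obtain $v\in\Bessel{\beta}{r}(\Omega)$ solving the same Neumann Poisson problem. Since $\Bessel{\beta}{r}(\Omega)\hookrightarrow\Bessel{\alpha}{p}(\Omega)$ by Theorem~\ref{thm:Sobolev-embedding}(i), both $u$ and $v$ are $\Bessel{\alpha}{p}$-solutions, so the uniqueness-up-to-constants part of Theorem~\ref{thm:Neumann-Poisson} applied with $(\alpha,p)$ forces $u-v$ to be constant; in particular $u\in\Bessel{\beta}{r}(\Omega)$. A density argument then extends the above identity to every $\phi\in\Bessel{2-\beta}{r'}(\Omega)$: the space $C^\infty(\overline{\Omega})\subset\Bessel{2-\alpha}{p'}(\Omega)$ is dense in $\Bessel{2-\beta}{r'}(\Omega)$, and each term of the identity is continuous in $\phi\in\Bessel{2-\beta}{r'}(\Omega)$ (the drift term via Lemma~\ref{lem:basic-estimates}(i) applied now with source pair $(\beta,r)\in\mathscr{B}$). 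This yields $u\in\ker(\mathcal{L}_{\beta,r}^{N}+\lambda\mathcal{I}_r)$.

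The main anticipated obstacle is ensuring that every intermediate pair produced during the iteration remains in $\mathscr{A}\cap\mathscr{B}$, since Lemma~\ref{lem:basic-estimates} must be invoked with the pair in $\mathscr{B}$ at each step. This is precisely the geometric bookkeeping already carried out in the proof of Proposition~\ref{prop:uniqueness-of-D}, and since the admissible sets are unchanged, that argument transfers without modification.
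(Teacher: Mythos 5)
Your proof is correct and takes essentially the same approach as the paper: first establish the regularity claim that $\ker(\mathcal{L}^{N}_{\alpha,p}+\lambda\mathcal{I}_{p})\subset\ker(\mathcal{L}^{N}_{\beta,r}+\lambda\mathcal{I}_{r})$ for pairs in $\mathscr{A}\cap\mathscr{B}$ related by $\frac1r-\frac{\beta}{n}=\frac1p-\frac{\alpha}{n}$, then transfer the iteration of Proposition \ref{prop:uniqueness-of-D} verbatim. The only (harmless) deviation is in the claim itself: you absorb $\lambda u$ into the right-hand side (using $\lambda\int_\Omega u\,dx=0$ for the compatibility condition) and invoke only Theorem \ref{thm:Neumann-Poisson}, whereas the paper keeps $\lambda$ on the left and uses Theorem \ref{thm:MT-Neumann} for $\lambda>0$; both comparisons with the auxiliary solution $v$ yield $u-v=\mathrm{const}$ and hence the same conclusion.
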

\begin{proof}
We first claim that  if $(\alpha,p), (\beta,q)\in \mathscr{A}\cap\mathscr{B}$ satisfy
\[   \alpha \leq \beta\quad \text{and }\quad \frac{1}{q}-\frac{\beta}{n}=\frac{1}{p}-\frac{\alpha}{n},\]
then
\[   \ker \left(\mathcal{L}_{\alpha,p}^{N}+\lambda \mathcal{I}_{p}\right) \subset \ker (\mathcal{L}_{\beta,q}^{N} + \lambda \mathcal{I}_{q})\quad \text{for all } \lambda \geq 0. \]
Let $u\in \ker (\mathcal{L}_{\alpha,p}^N+\lambda \mathcal{I}_{p})$. Then for all $\phi \in C^\infty(\overline{\Omega})$, we have
\begin{equation}\label{eq:Neumann-regularity-1}
\action{\nabla u,\nabla \phi} + \lambda \action{u,\phi} = \action{u\boldb,\nabla \phi} .
\end{equation}
By Lemma \ref{lem:basic-estimates}, the linear functional $\ell$ defined by
\begin{equation*}
\action{\ell,\phi} =\action{u\boldb,\nabla \phi} \quad \text{for all } \phi \in \Bessel{2-\beta}{q'}(\Omega)
\end{equation*}
is bounded on $\Bessel{2-\beta}{q'}(\Omega)$ and  satisfies $\action{\ell,1}=0$. Hence by Theorem \ref{thm:MT-Neumann} and Theorem \ref{thm:Neumann-Poisson}, there exists  $v\in \Bessel{\beta}{q}(\Omega)$ such that
\begin{equation}\label{eq:Neumann-regularity-2}
 \action{\nabla v,\nabla \phi}+\lambda \action{v,\phi} = \action{u\boldb,\nabla \phi}
 \end{equation}
 for all $\phi \in C^\infty(\overline{\Omega})$. Set $w=u-v$. It follows from \eqref{eq:Neumann-regularity-1}, \eqref{eq:Neumann-regularity-2}, and Theorem \ref{thm:Sobolev-embedding} that
\[  w\in \Bessel{\alpha}{p}(\Omega) + \Bessel{\beta}{q}(\Omega) = \Bessel{\alpha}{p}(\Omega) \]
and
\begin{equation}\label{eq:Neumann-regularity-3}
 \action{\nabla w,\nabla \phi}+\lambda \action{w,\phi} =0
\end{equation}
for all  $\phi \in C^\infty(\overline{\Omega})$.
A standard density argument shows that \eqref{eq:Neumann-regularity-3} holds for all $\phi \in \Bessel{2-\alpha}{p'}(\Omega)$. Hence by Theorem  \ref{thm:MT-Neumann} and Theorem \ref{thm:Neumann-Poisson}, $w=c$ for some constant $c$, which implies that $u\in \Bessel{\beta}{q}(\Omega)$. This proves the desired claim.

Now,  following exactly the same argument as in the proof of  Proposition \ref{prop:uniqueness-of-D} except using the claim instead of Lemma \ref{lem:Sobolev-regularity}, we can  prove   Lemma \ref{lem:Neumann-regularity-estimates} of which proof is omitted.
\end{proof}

\begin{prop}\label{prop:Neumann-kernel-characterization}
Let $(\alpha,p)\in \mathscr{A}\cap\mathscr{B}$.
\begin{enumerate}[label={\textnormal{(\roman*)}},ref={\roman*},leftmargin=2em]
\item $\ker \mathcal{L}_{\alpha,p}^{N} = \mathbb{R}\hat{u}$ and $\ker \mathcal{L}_{2-\alpha,p'}^{*,N} =\mathbb{R}$, where $\hat{u}$ is the function in  Lemma \ref{lem:L2-results-Neumann-kernel}.
\item $\ker(\mathcal{L}_{\alpha,p}^{N}+\lambda \mathcal{I}_{p}) = \ker(\mathcal{L}_{2-\alpha,p'}^{*,N}+\lambda \mathcal{I}_{p}) =\{0\}$ for all $\lambda >0$.
\end{enumerate}
\end{prop}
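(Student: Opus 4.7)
The plan is to reduce everything to the baseline $(1,2)$ results of Lemma \ref{lem:L2-results-Neumann-kernel} by combining the regularity lifting of Lemma \ref{lem:Neumann-regularity-estimates} with the duality identity \eqref{eq:kernel-duality-2} and the elementary Sobolev comparisons $\Bessel{1}{q}(\Omega)\subset\Bessel{1}{2}(\Omega)$ (for $q\ge 2$; the reverse inclusion for $q\le 2$) on the bounded domain $\Omega$. For any $(\alpha,p)\in\mathscr{A}\cap\mathscr{B}$, Lemma \ref{lem:Neumann-regularity-estimates} supplies some $(1,q)\in\mathscr{A}\cap\mathscr{B}$ with
\[
\ker(\mathcal{L}_{\alpha,p}^{N}+\lambda\mathcal{I}_{p})\subset\ker(\mathcal{L}_{1,q}^{N}+\lambda\mathcal{I}_{q})\quad \text{for every } \lambda\ge 0,
\]
so the whole analysis is reduced to the $(1,q)$ level.

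For part (ii), fix $\lambda>0$ and $(1,q)\in\mathscr{A}\cap\mathscr{B}$. If $q\ge 2$, any $u\in\ker(\mathcal{L}_{1,q}^{N}+\lambda\mathcal{I}_{q})$ lies in $\Bessel{1}{2}(\Omega)$ and, since the test space $\Bessel{1}{q'}(\Omega)\supset\Bessel{1}{2}(\Omega)$, the defining identity is also valid on $\Bessel{1}{2}(\Omega)$, whence $u\in\ker(\mathcal{L}_{1,2}^{N}+\lambda\mathcal{I}_{2})=\{0\}$ by Lemma \ref{lem:L2-results-Neumann-kernel}\,(ii). If instead $q\le 2$, the same argument applied to the adjoint gives $\ker(\mathcal{L}_{1,q'}^{*,N}+\lambda\mathcal{I}_{q'})\subset\ker(\mathcal{L}_{1,2}^{*,N}+\lambda\mathcal{I}_{2})=\{0\}$, so that \eqref{eq:kernel-duality-2} yields $\ker(\mathcal{L}_{1,q}^{N}+\lambda\mathcal{I}_{q})=\{0\}$. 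Combining with Lemma \ref{lem:Neumann-regularity-estimates} and once more with \eqref{eq:kernel-duality-2} concludes both kernel assertions in (ii).

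For part (i), take $\lambda=0$ and run the same two embedding arguments to obtain $\dim\ker\mathcal{L}_{1,q}^{N}\le 1$ for every $(1,q)\in\mathscr{A}\cap\mathscr{B}$. Since the constant $1$ always lies in $\ker\mathcal{L}_{1,q'}^{*,N}$, identity \eqref{eq:kernel-duality-2} promotes this to $\dim\ker\mathcal{L}_{1,q}^{N}=1$; the inclusion $\ker\mathcal{L}_{1,q}^{N}\subset\mathbb{R}\hat{u}$ in $\Bessel{1}{2}(\Omega)$, obtained directly from $\ker\mathcal{L}_{1,2}^{N}=\mathbb{R}\hat{u}$ when $q\ge 2$ and from the embedding $\hat{u}\in\Bessel{1}{2}(\Omega)\hookrightarrow\Bessel{1}{q}(\Omega)$ placing $\hat{u}$ inside $\ker\mathcal{L}_{1,q}^{N}$ when $q\le 2$, then gives $\ker\mathcal{L}_{1,q}^{N}=\mathbb{R}\hat{u}$ and $\hat{u}\in\Bessel{1}{q}(\Omega)$. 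Pushing back to the general $(\alpha,p)$ via Lemma \ref{lem:Neumann-regularity-estimates} yields $\dim\ker\mathcal{L}_{\alpha,p}^{N}\le 1$, while $1\in\ker\mathcal{L}_{2-\alpha,p'}^{*,N}$ together with \eqref{eq:kernel-duality-2} promotes both dimensions to $1$; consequently $\ker\mathcal{L}_{2-\alpha,p'}^{*,N}=\mathbb{R}$. A nonzero $u\in\ker\mathcal{L}_{\alpha,p}^{N}$, realised as $c\hat{u}$ in $\Bessel{1}{q}(\Omega)$ with $c\ne 0$, lies in $\Bessel{\alpha}{p}(\Omega)$, so $\hat{u}\in\Bessel{\alpha}{p}(\Omega)$; dividing the kernel identity through by $c$ then places $\hat{u}$ itself in $\ker\mathcal{L}_{\alpha,p}^{N}$, giving $\ker\mathcal{L}_{\alpha,p}^{N}=\mathbb{R}\hat{u}$.

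The main book-keeping difficulty will be the $q\ge 2$ versus $q\le 2$ case split, in which the direction of the comparison $\Bessel{1}{q}(\Omega)\hookrightarrow\Bessel{1}{2}(\Omega)$ flips: the embedding must be applied to the solution space in one case and to the test space (via the adjoint operator and \eqref{eq:kernel-duality-2}) in the other, and one has to check at each step that the defining weak identity can actually be transferred to the $(1,2)$ baseline without any density argument beyond the tautological inclusions between Bessel potential spaces.
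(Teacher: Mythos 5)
Your argument for part (ii) coincides with the paper's: reduce to $\alpha=1$ via Lemma \ref{lem:Neumann-regularity-estimates}, then split on $q\ge 2$ versus $q\le 2$, using the solution-space inclusion in one case and the adjoint together with \eqref{eq:kernel-duality-2} in the other, with Lemma \ref{lem:L2-results-Neumann-kernel}\,(ii) as the baseline. That part is fine, since \eqref{eq:kernel-duality-2} is indeed available for every $\lambda>0$.

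Part (i), however, has a genuine gap: you invoke \eqref{eq:kernel-duality-2} at $\lambda=0$ (to ``promote'' $\dim\ker\mathcal{L}_{1,q}^{N}\le 1$ to equality with $\dim\ker\mathcal{L}_{1,q'}^{*,N}$, and again for the pair $\mathcal{L}_{\alpha,p}^{N}$, $\mathcal{L}_{2-\alpha,p'}^{*,N}$), but that identity is established in the paper only under the hypothesis $\lambda>0$. Its derivation factors $\mathcal{L}_{\alpha,p}^{N}+\lambda\mathcal{I}_{p}=\mathcal{L}_{\alpha,p}^{0,\lambda}\circ\bigl[I+(\mathcal{L}_{\alpha,p}^{0,\lambda})^{-1}\circ\mathcal{B}_{\alpha,p}^{N}\bigr]$, which requires the invertibility of $\mathcal{L}_{\alpha,p}^{0,\lambda}$ from Theorem \ref{thm:MT-Neumann}; at $\lambda=0$ the Neumann Laplacian is \emph{not} injective (its kernel contains the constants), so this factorization and hence the dimension equality are simply not available by that route. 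The paper closes exactly this hole by a separate construction: using part (ii) with $\lambda=1$ to invert $\mathcal{L}_{\alpha,p}^{N}+\mathcal{I}_{p}$, forming the compact operator $\mathcal{K}_{\alpha,p}=(\mathcal{L}_{\alpha,p}^{N}+\mathcal{I}_{p})^{-1}\circ\mathcal{I}_{p}$ on $\Leb{p}(\Omega)$, identifying $\ker(I-\mathcal{K}_{\alpha,p})=\ker\mathcal{L}_{\alpha,p}^{N}$ and its adjoint counterpart, and then applying the Riesz--Schauder theory to $I-\mathcal{K}_{\alpha,p}$ to obtain \eqref{eq:kernel-dimension}. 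You have all the ingredients for this step (your part (ii) gives the needed invertibility for every pair in $\mathscr{A}\cap\mathscr{B}$, including $(1,q)$ and $(1,q')$), but the step itself must be carried out; as written, the citation of \eqref{eq:kernel-duality-2} at $\lambda=0$ does not justify the equality of kernel dimensions, and without that equality neither $\dim\ker\mathcal{L}_{1,q}^{N}=1$ for $q\le 2$ nor the final promotion of $\dim\ker\mathcal{L}_{\alpha,p}^{N}$ to $1$ follows. The remainder of your part (i) (the two-sided inclusions between $\ker\mathcal{L}_{1,q}^{N}$ and $\mathbb{R}\hat{u}$, and the transfer back to general $(\alpha,p)$ via Lemma \ref{lem:Neumann-regularity-estimates}) is correct once the dimension equality at $\lambda=0$ is supplied.
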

\begin{proof}
(ii)  Fix $\lambda>0$. By Lemma \ref{lem:Neumann-regularity-estimates} and \eqref{eq:kernel-duality-2}, it suffices to show the assertion when $\alpha=1$. Suppose first that $p\geq  2$. Since $\Bessel{1}{p}(\Omega)\subset \Bessel{1}{2}(\Omega)$, we have
\[   \ker(\mathcal{L}_{1,p}^{N}+\lambda\mathcal{I}_{p}) \subset \ker(\mathcal{L}_{1,2}^{N}+\lambda \mathcal{I}_2).\]
Hence it follows from Lemma \ref{lem:L2-results-Neumann-kernel} (ii) and \eqref{eq:kernel-duality-2} that
\begin{equation}\label{eq:trivial-kernel-Neumann-lambda}
\ker(\mathcal{L}_{1,p}^{N}+\lambda\mathcal{I}_{p})=\ker(\mathcal{L}_{1,p'}^{*,N}+\lambda \mathcal{I}_{p'}) =\{0\}.
\end{equation}
   Suppose next that $p<2$. Then since  $\ker(\mathcal{L}_{1,p'}^{*,N}+\lambda\mathcal{I}_{p'}) \subset \ker(\mathcal{L}_{1,2}^{*,N}+\lambda \mathcal{I}_2)$, \eqref{eq:trivial-kernel-Neumann-lambda} also follows from Lemma \ref{lem:L2-results-Neumann-kernel} (ii) and \eqref{eq:kernel-duality-2}.  This completes the proof of (ii).

(i) By the Riesz-Schauder theory, it immediately follows  from (ii) that the operators  $\mathcal{L}_{\alpha,p}^{N} + \mathcal{I}_{p}$ and $\mathcal{L}_{2-\alpha,p'}^{*,N}+\mathcal{I}_{p'}$  are invertible. Then by Theorem \ref{thm:Sobolev-embedding},  the linear operators
\[     \mathcal{K}_{\alpha,p} = \left( \mathcal{L}_{\alpha,p}^{N}+\mathcal{I}_{p}\right)^{-1} \circ \mathcal{I}_{p} : \Leb{p}(\Omega) \rightarrow \Leb{p}(\Omega) \]
and
\[     \mathcal{K}_{2-\alpha,p'}^{*} = \left( \mathcal{L}_{2-\alpha,p'}^{*,N}+\mathcal{I}_{p'}\right)^{-1} \circ \mathcal{I}_{p'} : \Leb{p'}(\Omega) \rightarrow \Leb{p'}(\Omega) \]
are bounded and compact. Since
\[     \int_\Omega (\mathcal{K}_{\alpha,p}u) v \myd{x} = \int_\Omega (\mathcal{K}^{*}_{2-\alpha,p'} v)u \myd{x} \]
for all $(u,v)\in \Leb{p}(\Omega)\times \Leb{p'}(\Omega)$, it follows that
\[   \mathcal{K}_{2-\alpha,p'}^{*} = \mathcal{I}_{p'}^{-1} \circ \mathcal{K}_{\alpha,p}^{\prime} \circ \mathcal{I}_{p'},\]
where $\mathcal{K}^{\prime}_{\alpha,p} : (\Leb{p}(\Omega))' \rightarrow (\Leb{p}(\Omega))'$ is the adjoint operator of $\mathcal{K}_{\alpha,p}$. Note also that
\[     \ker (I -\mathcal{K}_{\alpha,p}) = \ker \mathcal{L}_{\alpha,p}^{N}\quad \text{and}\quad \ker(I-\mathcal{K}_{2-\alpha,p'}^*)=\ker\mathcal{L}_{2-\alpha,p'}^{*,N}. \]
Hence  by the Riesz-Schauder theory, we deduce that
\begin{equation}\label{eq:Neumann-1-image}
  \mathrm{Im} (I-\mathcal{K}_{\alpha,p}) = \left\{  u\in \Leb{p}(\Omega) : \int_\Omega uv \myd{x}=0\quad \text{for all } v\in \ker \mathcal{L}_{2-\alpha,p'}^{*,N}   \right\},
\end{equation}
\begin{equation}\label{eq:Neumann-2-image}
  \mathrm{Im} (I-\mathcal{K}_{2-\alpha,p'}^*) = \left\{  v\in \Leb{p'}(\Omega) : \int_\Omega uv \myd{x}=0\quad \text{for all } u\in \ker \mathcal{L}_{\alpha,p}^{N}   \right\},
\end{equation}
and
\begin{equation}\label{eq:kernel-dimension}
\dim \ker \mathcal{L}_{\alpha,p}^N = \dim\ker \mathcal{L}_{2-\alpha,p'}^{*,N} <\infty.
\end{equation}
But since $\mathbb{R}\subset \ker \mathcal{L}_{2-\alpha,p'}^{*,N}$, we have
\begin{equation}\label{eq:inclusion-dimension-Neumann}
1\leq \dim \ker \mathcal{L}_{2-\alpha,p'}^{*,N}=\dim\ker \mathcal{L}_{\alpha,p}^{N}.
\end{equation}
Hence by Lemma \ref{lem:Neumann-regularity-estimates},  it suffices to show that
\begin{equation}\label{eq:Bessel-1p-characterization}
 \ker \mathcal{L}_{1,q}^{N}  = \mathbb{R}\hat{u}\quad \text{for all } (1,q)\in \mathscr{A}\cap\mathscr{B},
 \end{equation}
where $\hat{u}$ is the same function in Lemma \ref{lem:L2-results-Neumann-kernel}.

Fix $(1,q)\in \mathscr{A}\cap\mathscr{B}$ and suppose first that $q\geq 2$. Since $\Bessel{1}{q}(\Omega)\subset \Bessel{1}{2}(\Omega)$, we have $\ker \mathcal{L}_{1,q}^{N}\subset \ker \mathcal{L}_{1,2}^{N}$. Thus, \eqref{eq:Bessel-1p-characterization} follows from Lemma \ref{lem:L2-results-Neumann-kernel} and \eqref{eq:inclusion-dimension-Neumann}. Suppose next that $q\leq 2$. Then since $\ker \mathcal{L}_{1,q'}^{*,N}\subset \ker \mathcal{L}_{1,2}^{*,N}$ and $\ker \mathcal{L}_{1,2}^{N}\subset \ker \mathcal{L}_{1,q}^{N}$, \eqref{eq:Bessel-1p-characterization} follows from Lemma \ref{lem:L2-results-Neumann-kernel} and  \eqref{eq:inclusion-dimension-Neumann} again.  This completes the proof of Proposition \ref{prop:Neumann-kernel-characterization}.
\end{proof}

Now we are prepared to prove Theorem \ref{thm:Neumann} using Proposition \ref{prop:Neumann-kernel-characterization}.

\begin{proof}[Proof of Theorem  \ref{thm:Neumann}]
By Lemma \ref{lem:L2-results-Neumann-kernel}, there exists a function $\hat{u}>0$ a.e. on $\Omega$ such that  $\ker \mathcal{L}_{1,2}^{N}=\mathbb{R}\hat{u}$. Moreover, by Proposition \ref{prop:Neumann-kernel-characterization} (i), $\hat{u}\in \Bessel{\beta}{q}(\Omega)$  and $\ker \mathcal{L}_{\beta,q}^{N}=\mathbb{R}\hat{u}$ for all $(\beta,q)\in \mathscr{A}\cap\mathscr{B}$. This proves Theorem  \ref{thm:Neumann} (i).

Let us prove  (ii).  Suppose that $f\in\oBessel{\alpha-2}p(\Omega)$ and $u_{\Neu}\in\bes{\alpha-1-1/p}p(\partial\Omega)$ satisfy $\action{f,1}+\action{u_{\Neu},1}=0$.
Define a linear functional $\ell$ by
\[
\action{\ell,\phi}=\action{f,\phi}+\action{u_{\Neu},\Tr\phi}\quad \text{for all } \phi \in \Bessel{2-\alpha}{p'}(\Omega).
\]
Then $\ell\in\oBessel{\alpha-2}p(\Omega)$ and $\action{\ell,1}=0$. By Proposition \ref{prop:Neumann-kernel-characterization}, $\mathcal{L}_{\alpha,p}^{N}+\mathcal{I}_{p}$ is invertible. Hence  there exists a unique $w\in\Bessel{\alpha}p(\Omega)$ such that
\[
\left(\mathcal{L}_{\alpha,p}^{N}+\mathcal{I}_{p}\right)w=\ell.
\]
Note that
\[   \action{\mathcal{I}_{p}w,1} =-\action{\mathcal{L}_{\alpha,p}^{N}w,1} + \action{\ell,1} = 0. \]
Hence by Proposition \ref{prop:Neumann-kernel-characterization} (i) and \eqref{eq:Neumann-1-image}, there exists $u_{0}\in\Bessel{\alpha}p(\Omega)$
such that
\[
\left(I-\mathcal{K}_{\alpha,p}\right)u_{0}=w.
\]
By the definitions of $\mathcal{K}_{\alpha,p}$ and $w$, we have
\[
\mathcal{L}_{\alpha,p}^{N}u_{0}=\left(\mathcal{L}_{\alpha,p}^{N}+\mathcal{I}_{p}\right)w=\ell,
\]
which implies that $u_{0}\in \Bessel{\alpha}{p}(\Omega)$ satisfies
\begin{equation}\label{eq:N-1-integral-equation}
\action{\nabla u_0,\nabla \phi}-\action{u_0 \boldb,\nabla \phi} =\action{f,\phi} + \action{u_{\Neu},\Tr \phi} \quad \text{for all } \phi \in \Bessel{2-\alpha}{p'}(\Omega).
\end{equation}
Hence defining
\[
u=u_{0}-\left( \frac{\int_\Omega u_0 \, dx }{\int_\Omega \hat{u} \, dx } \right)  \hat{u},
\]
we prove the existence assertion of Theorem  \ref{thm:Neumann} (ii).
The $\Bessel{\alpha}p$-estimate for the function $u$ is easily deduced from the boundness of the operators $\left(\mathcal{L}_{\alpha,p}^{N}+\mathcal{I}_{p}\right)^{-1}$ and $\mathcal{K}_{\alpha,p}$. Finally, to prove the uniqueness part,  let $\overline{u}\in\Bessel{\alpha}p(\Omega)$ be another function satisfying \eqref{eq:N-1-integral-equation} and $\int_{\Omega}\overline{u}\myd{x}=0$. Since $u-\overline{u}\in\ker\mathcal{L}_{\alpha,p}^{N}$, it follows from Proposition \ref{prop:Neumann-kernel-characterization} (i) that $u-\overline{u}=c\hat{u}$ for some constant $c$. But $c$ must be zero because
\[
c\int_{\Omega}\hat{u}\myd{x}=\int_{\Omega}u\myd{x}-\int_{\Omega}\overline{u}\myd{x}=0.
\]
This completes the proof of Theorem \ref{thm:Neumann} (ii).  Following exactly the same argument except for
using \eqref{eq:Neumann-2-image} instead of \eqref{eq:Neumann-1-image},
we can also prove Theorem \ref{thm:Neumann} (iii) of which the proof is omitted.
\end{proof}

Using Proposition \ref{prop:normal-trace} and Theorem \ref{thm:Neumann},  we prove Theorem \ref{thm:real-Neumann-problem}.

\begin{proof}[Proof of Theorem  \ref{thm:real-Neumann-problem}]
 The proof of (i) is similar to that of Theorem \ref{thm:real-Neumann-easy-one}. We  only prove  (ii).  Following the argument in the proof of Theorem \ref{thm:real-Neumann-easy-one}, we see that $\Bessel{-\beta}{q'}(\Omega)\hookrightarrow \oBessel{-\alpha}{p'}(\Omega)$. Hence it follows from Theorem \ref{thm:Neumann} (iii) that there exists a unique function $v\in \Bessel{2-\alpha}{p'}(\Omega)$ with $\int_\Omega v\hat{u} \myd{x}=0$ such that
\[
\action{\nabla v,\nabla \psi} -\action{\boldb \cdot \nabla v,  \psi} =\action{g,\psi} +\action{v_{\Neu},\Tr \psi}\quad \text{for all } \psi \in \Bessel{\alpha}{p}(\Omega).
\]
By Lemma \ref{lem:basic-estimates} (ii), we have
\[ -\triangle v =g+\boldb \cdot \nabla v \in \Bessel{-\beta}{q'}(\Omega).\]
Hence it follows from Proposition \ref{prop:normal-trace} that there exists a unique $\gamma_{\nu}(\nabla v) \in \bes{1/p-\alpha}{p'}(\partial\Omega)$ satisfying
\[   \action{\gamma_{\nu}(\nabla v),\Tr \psi} = -\action{g+\boldb \cdot \nabla v, \psi} + \action{\nabla v, \nabla \psi}=\action{v_{\Neu},\Tr \psi}\]
for all $\psi \in \Bessel{\alpha}{p}(\Omega)$. Since $\Tr : \Bessel{\alpha}{p}(\Omega) \rightarrow \bes{\alpha-1/p}{p}(\partial\Omega)$ is surjective, we get $\gamma_{\nu}(\nabla v) =v_{\Neu}$. To show the uniqueness part, let $v \in \Bessel{2-\alpha}{p'}(\Omega)$ be a solution of
\begin{equation*}
\left\{\begin{alignedat}{2}
-\triangle v-\boldb\cdot \nabla v & =0 & \quad & \text{in }\Omega,\\
\gamma_{\nu}(\nabla  v) & =0 & \quad & \text{on }\partial\Omega
\end{alignedat}\right.
\end{equation*}
satisfing  $\int_\Omega v \hat{u} \myd{x}=0$. Then since $\gamma_{\nu}(\nabla v)=0$ and $-\triangle v -\boldb \cdot \nabla v=0$ in $\Omega$, it follows that
\[   0= \action{\gamma_{\nu}(\nabla v),\Tr \psi} = \action{\nabla v,\nabla \psi} + \action{\triangle v,\psi} = \action{\nabla v,\nabla \psi} -\action{\boldb \cdot \nabla v, \psi}\]
for all $\psi \in \Bessel{\alpha}{p}(\Omega)$.  This implies that  $v\in \ker \mathcal{L}_{2-\alpha,p'}^{*,N}$. So it follows from  Proposition  \ref{prop:Neumann-kernel-characterization} that $v=c$ for some constant $c$.  Since $\int_\Omega v \hat{u} \myd{x}=0$ and $\hat{u}>0$ a.e. on $\Omega$, we conclude that $v=c=0$. This  completes the proof of Theorem \ref{thm:real-Neumann-problem}.
\end{proof}

Using the Riesz-Schauder theory with Proposition \ref{prop:Neumann-kernel-characterization} (ii), we can also prove the following theorem whose proof is omitted.
\begin{thm}
\label{thm:Neumann-lambda} Let  $\lambda>0$ and $(\alpha,p)\in \mathscr{A}\cap\mathscr{B}$.
\begin{enumerate}[label={\textnormal{(\roman*)}},ref={\roman*},leftmargin=2em]
\item For every $f\in\oBessel{\alpha-2}p(\Omega)$ and $u_{\Neu}\in\bes{\alpha-1-1/p}p(\partial\Omega)$,
there exists a unique function $u\in\Bessel{\alpha}p(\Omega)$ satisfying
\[    \action{\nabla u,\nabla \phi}-\action{u\boldb,\nabla \phi} + \lambda \action{u,\phi} = \action{f, \phi} +\action{u_{\Neu},\Tr \phi}    \]
for all $\phi \in \Bessel{2-\alpha}{p'}(\Omega)$.  Moreover,  $u$ satisfies
\[
\norm u{\Bessel{\alpha}p(\Omega)}\le C\left(\norm f{\oBessel{\alpha-2}p(\Omega)}+\norm{u_{\Neu}}{\bes{\alpha-1-1/p}p(\partial\Omega)}\right)
\]
for some constant $C=C(n,\alpha,p,\boldb,\lambda,\Omega)$.
\item For every $g\in\oBessel{-\alpha}{p'}(\Omega)$ and $v_{\Neu}\in\bes{1/p-\alpha}{p'}(\partial\Omega)$,
there exists a unique function $v\in\Bessel{2-\alpha}{p'}(\Omega)$ satisfying
\[    \action{\nabla v,\nabla \psi}-\action{\boldb\cdot\nabla v,\psi} + \lambda \action{v,\psi} = \action{g, \psi} +\action{v_{\Neu},\Tr \psi}   \]
for all $\psi \in \Bessel{\alpha}{p}(\Omega)$.  Moreover,   $v$ satisfies
\[
\norm v{\Bessel{2-\alpha}{p'}(\Omega)}\le C\left(\norm{g}{\oBessel{-\alpha}{p'}(\Omega)}+\norm{v_{\Neu}}{\bes{1/p-\alpha}{p'}(\partial\Omega)}\right)
\]
for some constant $C=C(n,\alpha,p,\boldb,\lambda,\Omega)$.
\end{enumerate}
\end{thm}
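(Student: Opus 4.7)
The plan is to follow exactly the same Riesz-Schauder scheme that was used in the proof of Theorem \ref{thm:Dirichlet-problems}, replacing Proposition \ref{prop:uniqueness-of-D} by Proposition \ref{prop:Neumann-kernel-characterization}(ii). Fix $\lambda>0$ and $(\alpha,p)\in\mathscr{A}\cap\mathscr{B}$. By the very definitions of the operators introduced in Section \ref{sec:Neumann}, the identity
\[
\mathcal{L}_{\alpha,p}^{N}+\lambda\mathcal{I}_{p}=\mathcal{L}_{\alpha,p}^{0,\lambda}+\mathcal{B}_{\alpha,p}^{N}
\]
holds as bounded linear operators from $\Bessel{\alpha}{p}(\Omega)$ to $\oBessel{\alpha-2}{p}(\Omega)$, and by Theorem \ref{thm:MT-Neumann} the operator $\mathcal{L}_{\alpha,p}^{0,\lambda}$ is bijective. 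Hence we may factor
\[
\mathcal{L}_{\alpha,p}^{N}+\lambda\mathcal{I}_{p}=\mathcal{L}_{\alpha,p}^{0,\lambda}\circ\left[I+\left(\mathcal{L}_{\alpha,p}^{0,\lambda}\right)^{-1}\circ\mathcal{B}_{\alpha,p}^{N}\right],
\]
and it suffices to show that the bracketed operator on $\Bessel{\alpha}{p}(\Omega)$ is bijective.

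The key input is Lemma \ref{lem:compactness-of-operators}, which says that $\mathcal{B}_{\alpha,p}^{N}$ is compact; therefore $(\mathcal{L}_{\alpha,p}^{0,\lambda})^{-1}\circ\mathcal{B}_{\alpha,p}^{N}$ is a compact operator on $\Bessel{\alpha}{p}(\Omega)$. By the Riesz-Schauder theory, $I+(\mathcal{L}_{\alpha,p}^{0,\lambda})^{-1}\circ\mathcal{B}_{\alpha,p}^{N}$ is bijective if and only if it is injective. But as already observed (compare \eqref{eq:kernel-characterization-Neumann-1}),
\[
\ker\!\left(I+\left(\mathcal{L}_{\alpha,p}^{0,\lambda}\right)^{-1}\circ\mathcal{B}_{\alpha,p}^{N}\right)=\ker\!\left(\mathcal{L}_{\alpha,p}^{N}+\lambda\mathcal{I}_{p}\right)=\{0\}
\]
by Proposition \ref{prop:Neumann-kernel-characterization}(ii). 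Hence $\mathcal{L}_{\alpha,p}^{N}+\lambda\mathcal{I}_{p}$ is bijective, which is exactly the existence and uniqueness assertion in (i). Given $f\in\oBessel{\alpha-2}{p}(\Omega)$ and $u_{\Neu}\in\bes{\alpha-1-1/p}{p}(\partial\Omega)$, the linear functional $\phi\mapsto\langle f,\phi\rangle+\langle u_{\Neu},\Tr\phi\rangle$ on $\Bessel{2-\alpha}{p'}(\Omega)$ belongs to $\oBessel{\alpha-2}{p}(\Omega)$ with norm controlled by $\|f\|_{\oBessel{\alpha-2}{p}(\Omega)}+\|u_{\Neu}\|_{\bes{\alpha-1-1/p}{p}(\partial\Omega)}$ (using boundedness of the trace operator in Theorem \ref{thm:trace-theorem}), so the open mapping theorem applied to the inverse of $\mathcal{L}_{\alpha,p}^{N}+\lambda\mathcal{I}_{p}$ yields the desired estimate.

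The proof of part (ii) is identical, using the factorization
\[
\mathcal{L}_{2-\alpha,p'}^{*,N}+\lambda\mathcal{I}_{p'}=\left[I+\mathcal{B}_{2-\alpha,p'}^{*}\circ\left(\mathcal{L}_{2-\alpha,p'}^{0,\lambda}\right)^{-1}\right]\circ\mathcal{L}_{2-\alpha,p'}^{0,\lambda},
\]
the compactness of $\mathcal{B}_{2-\alpha,p'}^{*}$ from Lemma \ref{lem:compactness-of-operators}, and the triviality of $\ker(\mathcal{L}_{2-\alpha,p'}^{*,N}+\lambda\mathcal{I}_{p'})$ guaranteed by Proposition \ref{prop:Neumann-kernel-characterization}(ii). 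I do not anticipate any real obstacle here: all the genuine analysis (the bilinear estimates, the compactness of the drift operators, and the characterization of the kernels via the $(\alpha,p)=(1,2)$ result of Droniou--V\'azquez together with the iteration in Lemma \ref{lem:Neumann-regularity-estimates}) is already in place, and the present theorem is a direct corollary of that machinery; the only point to be slightly careful about is to cite Theorem \ref{thm:MT-Neumann} (rather than Theorem \ref{thm:Neumann-Poisson}) to ensure that the unperturbed operator $\mathcal{L}_{\alpha,p}^{0,\lambda}$ is genuinely bijective (not just bijective up to constants) when $\lambda>0$, which is what allows us to dispense with the compatibility condition appearing in Theorem \ref{thm:Neumann}.
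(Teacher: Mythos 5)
Your proposal is correct and is precisely the argument the paper intends: the paper omits the proof, stating only that it follows from the Riesz--Schauder theory combined with Proposition \ref{prop:Neumann-kernel-characterization}(ii), and your write-up supplies exactly that — the factorization through the bijective operator $\mathcal{L}_{\alpha,p}^{0,\lambda}$ from Theorem \ref{thm:MT-Neumann}, the compactness of the drift operators from Lemma \ref{lem:compactness-of-operators}, the triviality of the kernels for $\lambda>0$, and the open mapping theorem for the estimate. No gaps.
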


Theorem \ref{thm:Neumann-lambda} is an extension of Theorem \ref{thm:MT-Neumann}  to more general equations with singular drifts $\boldb$ in $ \Leb{n}(\Omega)^n$. Moreover, following the proof of Theorem \ref{thm:real-Neumann-problem} but using Theorem \ref{thm:Neumann-lambda} instead of Theorem \ref{thm:Neumann}, we can  also prove the following theorem whose proof is omitted.  
\begin{thm}
 Let $\lambda>0$ and  $(\alpha,p)\in \mathscr{A}\cap\mathscr{B}$.
\begin{enumerate}[label={\textnormal{(\roman*)}},ref={\roman*},leftmargin=2em]
\item Assume that $(\beta,q)$ satisfies
\[   \alpha\leq \beta,\quad  0<\frac{1}{q}<\beta-1 \leq 1   \quad\text{and}\quad\frac{1}{p}-\frac{\alpha}{n} \geq \frac{1}{q}-\frac{\beta}{n}. \]
 For every $f\in \Bessel{\beta-2}{q}(\Omega)$ and $u_{\Neu} \in \bes{\alpha-1-1/p}{p}(\partial\Omega)$, there exists a unique function $u\in \Bessel{\alpha}{p}(\Omega)$  such that
\begin{equation*}
\left\{\begin{alignedat}{2}
-\triangle u+\Div(u\boldb) +\lambda u& =f & \quad & \text{in }\Omega,\\
\gamma_{\nu} (\nabla u-u\boldb)& =u_{\Neu} & \quad & \text{on }\partial\Omega.\\
\end{alignedat}\right.
\end{equation*}
\item   Assume that $(\beta,q)$ satisfies
\[   \beta\leq \alpha,\quad 0\leq \beta<\frac{1}{q}<1,\quad \text{and }\quad  \frac{1}{p}-\frac{\alpha}{n} = \frac{1}{q}-\frac{\beta}{n}. \]
 For every $g\in \Bessel{-\beta}{q'}(\Omega)$ and $v_{\Neu} \in \bes{1/p-\alpha}{p'}(\partial\Omega)$, there exists a unique  function  $v\in \Bessel{2-\alpha}{p'}(\Omega)$  such that
\begin{equation*}
\left\{\begin{alignedat}{2}
-\triangle v-\boldb \cdot \nabla v+\lambda v& =g & \quad & \text{in }\Omega,\\
\gamma_{\nu} (\nabla v)& =v_{\Neu} & \quad & \text{on }\partial\Omega.
\end{alignedat}\right.
\end{equation*}
\end{enumerate}
\end{thm}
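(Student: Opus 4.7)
The plan is to mimic the proofs of Theorems~\ref{thm:real-Neumann-easy-one} and \ref{thm:real-Neumann-problem}, replacing the applications of Theorem~\ref{thm:Neumann} by Theorem~\ref{thm:Neumann-lambda}. The key simplification from $\lambda>0$ is that, by Proposition~\ref{prop:Neumann-kernel-characterization}~(ii), the kernels of $\mathcal{L}_{\alpha,p}^{N}+\lambda\mathcal{I}_{p}$ and $\mathcal{L}_{2-\alpha,p'}^{*,N}+\lambda\mathcal{I}_{p'}$ are trivial, so neither a compatibility condition on the data nor a normalization on $u$ (resp.\ $v$) is needed.

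For part~(i), I first observe the embedding $\Bessel{\beta-2}{q}(\Omega)\hookrightarrow \oBessel{\alpha-2}{p}(\Omega)$, obtained exactly as in the proof of Theorem~\ref{thm:real-Neumann-easy-one}: the hypotheses on $(\alpha,p),(\beta,q)$ yield $\Bessel{2-\alpha}{p'}(\Omega)\hookrightarrow\Bessel{2-\beta}{q'}(\Omega)=\oBessel{2-\beta}{q'}(\Omega)$ with dense image, and dualizing gives the claim. Hence $f$ may be viewed as an element of $\oBessel{\alpha-2}{p}(\Omega)$ and Theorem~\ref{thm:Neumann-lambda}~(i) provides a unique $u\in \Bessel{\alpha}{p}(\Omega)$ such that
\[
\action{\nabla u,\nabla\phi}-\action{u\boldb,\nabla\phi}+\lambda\action{u,\phi}=\action{f,\phi}+\action{u_{\Neu},\Tr\phi}\qquad\forall\,\phi\in\Bessel{2-\alpha}{p'}(\Omega).
\]
Testing against $\phi\in C_{0}^{\infty}(\Omega)$ gives $-\triangle u+\Div(u\boldb)+\lambda u=f$ in $\Omega$; equivalently, $F:=\nabla u-u\boldb\in\Bessel{\alpha-1}{p}(\Omega)^{n}$ has divergence $\Div F=\lambda u-f\in\Bessel{\beta-2}{q}(\Omega)$, where I use that the same embedding $\Bessel{\alpha}{p}(\Omega)\hookrightarrow\Bessel{\beta-2}{q}(\Omega)$ puts $u$ into the right low-regularity space.

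Now I apply Proposition~\ref{prop:normal-trace} to $F$ with the Proposition's indices set to $(\alpha-1,p)$ and $(\beta-1,q)$; all its hypotheses translate directly from the theorem's (the bounds $\alpha>1/p$ and $\alpha<1+1/p$ on $\mathscr{A}$ yield $-1/p'<\alpha-1<1/p$, and the scaling inequality is invariant under the shift $\alpha\mapsto\alpha-1$, $\beta\mapsto\beta-1$). This produces $\gamma_{\nu}(F)\in\bes{\alpha-1-1/p}{p}(\partial\Omega)$ with $\action{\gamma_{\nu}(F),\Tr\phi}=\action{F,\nabla\phi}+\action{\Div F,\phi}=\action{u_{\Neu},\Tr\phi}$ for all $\phi\in\Bessel{2-\alpha}{p'}(\Omega)$, so surjectivity of $\Tr$ (Theorem~\ref{thm:trace-theorem}) identifies $\gamma_{\nu}(F)=u_{\Neu}$. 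Uniqueness follows because any solution of the homogeneous strong problem satisfies, via the normal-trace identity, $\action{\nabla u,\nabla\phi}-\action{u\boldb,\nabla\phi}+\lambda\action{u,\phi}=0$ for all admissible $\phi$, i.e.\ $u\in\ker(\mathcal{L}_{\alpha,p}^{N}+\lambda\mathcal{I}_{p})=\{0\}$.

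Part~(ii) is handled in the same fashion with $F=\nabla v$: first apply Theorem~\ref{thm:Neumann-lambda}~(ii) (after promoting $g$ to an element of $\oBessel{-\alpha}{p'}(\Omega)$ by the dual of $\Bessel{\alpha}{p}(\Omega)\hookrightarrow\Bessel{\beta}{q}(\Omega)$) to obtain $v\in\Bessel{2-\alpha}{p'}(\Omega)$ with the appropriate weak formulation, then use Lemma~\ref{lem:basic-estimates}~(ii) and the PDE to check $-\triangle v=g+\boldb\cdot\nabla v-\lambda v\in\Bessel{-\beta}{q'}(\Omega)$, and conclude via Proposition~\ref{prop:normal-trace} applied to $\nabla v$. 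Uniqueness follows from triviality of $\ker(\mathcal{L}_{2-\alpha,p'}^{*,N}+\lambda\mathcal{I}_{p'})$. The main place requiring care is bookkeeping of Sobolev indices when applying Proposition~\ref{prop:normal-trace}: one must check that the shift by $1$ in the first argument is absorbed by the membership $(\alpha,p)\in\mathscr{A}$ and that the unshifted scaling inequality $1/p-\alpha/n\ge 1/q-\beta/n$ coincides with the Proposition's scaling requirement after the shift; both verifications are elementary, so no genuine obstacle arises.
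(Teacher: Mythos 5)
Your overall route is exactly the one the paper intends (it omits this proof, saying only to repeat the argument of Theorem \ref{thm:real-Neumann-problem} with Theorem \ref{thm:Neumann-lambda} in place of Theorem \ref{thm:Neumann}), and your part (ii) is correct: the embedding $\Bessel{2-\alpha}{p'}(\Omega)\hookrightarrow\Bessel{-\beta}{q'}(\Omega)$ needed to absorb the extra term $\lambda v$ does hold, because the equality $1/p-\alpha/n=1/q-\beta/n$ reduces it to $\Bessel{2-\alpha}{p'}(\Omega)\hookrightarrow \Leb{m'}(\Omega)$ with $1/m=1/p-\alpha/n$, which is automatic.

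Part (i), however, has a genuine gap at the step where you place $\Div F=\lambda u-f$ into $\Bessel{\beta-2}{q}(\Omega)$. The embedding $\Bessel{\alpha}{p}(\Omega)\hookrightarrow\Bessel{\beta-2}{q}(\Omega)$ you invoke is false in general: the hypotheses of the theorem only give the one-sided inequality $1/p-\alpha/n\ge 1/q-\beta/n$, which bounds $1/p-1/q$ from \emph{below}, whereas that embedding needs an upper bound. Concretely, take $n=10$, $(\alpha,p)=(1,2)\in\mathscr{A}\cap\mathscr{B}$ and $(\beta,q)=(2,10)$, which satisfies all the stated conditions; then $\Bessel{\beta-2}{q}(\Omega)=\Leb{10}(\Omega)$ while $\Bessel{1}{2}(\Omega)$ only embeds into $\Leb{5/2}(\Omega)$, so $\lambda u\notin\Bessel{\beta-2}{q}(\Omega)$ in general. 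Worse, one cannot always repair this by choosing a different single pair: for $n=3$, $(\alpha,p)=(1,10/9)$ (admissible on a $C^{1}$ domain) and $(\beta,q)=(1.2,10)$, there is \emph{no} pair $(\gamma,s)$ with $\Leb{p}(\Omega)+\Bessel{\beta-2}{q}(\Omega)\subset\Bessel{\gamma-1}{s}(\Omega)$ satisfying the index constraints of Proposition \ref{prop:normal-trace} for $F\in\Bessel{\alpha-1}{p}(\Omega)^{n}$. The correct fix is to note that the construction in the proof of Proposition \ref{prop:normal-trace} only uses that $\phi\mapsto\action{\Div F,\phi}$ is bounded on $\Bessel{1-(\alpha-1)}{p'}(\Omega)=\Bessel{2-\alpha}{p'}(\Omega)$; here this holds for each summand separately, since $\action{f,\phi}$ is controlled via $\Bessel{2-\alpha}{p'}(\Omega)\hookrightarrow\oBessel{2-\beta}{q'}(\Omega)$ and $\action{\lambda u,\phi}$ via $\Bessel{2-\alpha}{p'}(\Omega)\hookrightarrow\Leb{p'}(\Omega)$. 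With that (easy but necessary) extension of the normal-trace construction, the rest of your argument — identification of $\gamma_{\nu}(F)$ with $u_{\Neu}$ by surjectivity of $\Tr$, and uniqueness from $\ker(\mathcal{L}_{\alpha,p}^{N}+\lambda\mathcal{I}_{p})=\{0\}$ — goes through as written.
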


\providecommand{\bysame}{\leavevmode\hbox to3em{\hrulefill}\thinspace}
\providecommand{\MR}{\relax\ifhmode\unskip\space\fi MR }
\providecommand{\MRhref}[2]{%
  \href{http://www.ams.org/mathscinet-getitem?mr=#1}{#2}
}
\providecommand{\href}[2]{#2}

\end{document}